\theoremstyle{plain}
\newtheorem*{theorem*}{Theorem}
\newtheorem{theorem}{Theorem}[section]
\newtheorem{lemma}[theorem]{Lemma}
\newtheorem{cor}[theorem]{Corollary}
\newtheorem{corollary}[theorem]{Corollary}
\newtheorem{rem}[theorem]{Remark}
\theoremstyle{definition}
\newtheorem{definition}[theorem]{Definition}
\renewcommand{\Re}{{\rm Re}\,}
\renewcommand{\Im}{{\rm Im}\,}
\newcommand{\del}{\partial}
\newcommand{\delbar}{\bar{\del}}
\newcommand{\todo}[1]{\marginpar{\emph{to do:#1}}}
\newcommand{\Li}{\underline{e\mathbb{H}}}
\newcommand{\eigenline}{eigenline }
\newcommand{\R}{\mathbb{ R}}
\newcommand{\C}{\mathbb{ C}}
\newcommand{\Z}{\mathbb{ Z}}
\newcommand{\T}{\mathcal{T}}
\renewcommand{\H}{\mathbb{ H}}
\renewcommand{\P}{\mathbb{ P}}
\newcommand{\HP}{\H\P}
\newcommand{\CP}{\C\P}
\newcommand{\invers}{^{-1}}
\DeclareMathOperator{\End}{End}
\DeclareMathOperator{\Hom}{Hom}
\DeclareMathOperator{\ad}{ad}
\DeclareMathOperator{\tr}{tr}
\DeclareMathOperator{\im}{im}
\DeclareMathOperator{\Span}{span}
\DeclareMathOperator{\Harm}{Harm}
\begin{document}

\title
[Darboux transforms and spectral curves]{Darboux transforms and spectral curves of constant mean curvature surfaces revisited}
\author{E. Carberry, K. Leschke, F. Pedit}
\date{\today}

\address{Emma Carberry\\School of Mathematics and Statistics\\University of Sydney \\NSW, 2006,
Australia. }

\address{
Katrin Leschke\\ Dept. of Mathematics\\ University of Leicester\\ University Road\\ Leicester LE1 7RH, UK.}

\address{Franz Pedit\\Mathematisches Institut\\
Auf der Morgenstelle 10\\
72076 Tübingen\\
Germany\\
and
Department of Mathematics and Statistics\\
University of Massachusetts Amherst\\
Amherst, MA    01003-9305, USA
}

\email{emma.carberry@sydney.edu.au,
 katrin.leschke@mcs.le.ac.uk, \linebreak
pedit@mathematik.uni-tuebingen.de
}

\maketitle

\section{Introduction}
Surfaces of non--zero constant mean curvature in Euclidean $3$--space
are studied from a variety of different view points. These surfaces
are critical with respect to the variation of area with constrained
volume and their Euler--Lagrange equation is an important example of a
geometric non--linear elliptic partial differential equation
\cite{Wente:86, Kapouleas:91, mp, KGB}.
Constant mean curvature surfaces also have a deep connection with the
theory of integrable systems since the Gauss--Codazzi equation is a
well known soliton equation, namely the sinh--Gordon equation.  This
has led to a complete classification of constant mean curvature tori
in terms of periodic linear flows on Jacobians of hyperelliptic
algebraic curves \cite{PS:89, Hitchin:90, Bobenko:91, EKT:93,
  Jaggy:94}.

The essential ingredient of the integrable systems approach is that a
surface $f\colon M\to\R^3$ of constant mean curvature has an {\em
  associated $ S ^ 1$-family} of constant mean curvature surfaces
$f^{\mu}$, obtained by rotating the Hopf differential of $f$ by $\mu
\in S^1$.  The surfaces $f^{\mu}$ generally develop translational and rotational 
 periods in $\R^3$ if $M$ has
non--trivial topology. Extending the circle parameter, also called the
{\em spectral parameter}, to $\mu \in \C\P^1$ one obtains a rational
family of flat ${\bf SL}(2,\C)$ connections $\nabla^{\mu}$ over the
surface $M$ with simple poles at $\mu=0,\infty$. This family is
unitary along the unit circle $\mu\in S^1$ where it describes the
associated family of constant mean curvature surfaces $f^{\mu}$. When
$M=T^2$ is a torus, the holonomy representation $H^{\mu}$ of the
family $\nabla^{\mu}$ with respect to a chosen base point $ p $ on
$T^2$ is abelian and hence has simultaneous eigenlines.  These
eigenlines define a hyperelliptic algebraic curve $\Sigma _ e $ over
the $\mu $--parameter space $\C\P^1$, together with a holomorphic line
bundle $\mathcal E (p) $ over $\Sigma_e$.  Changing the base point
$p\in T^2$ does not change this {\em eigenline spectral curve} $\Sigma
_ e $, but the {\em eigenline bundles} $\mathcal E (p) $ sweep out a
$2$--dimensional subtorus of the Jacobian of $\Sigma_e$.

Recently a more general notion of a spectral curve has been introduced
\cite {conformal_tori, Taimanov, Schmidt} for any conformally immersed
torus $f\colon T^2\to S^4$. This {\em multiplier spectral curve}
$\Sigma _ m $ does not rely upon the existence of a family of flat
connections and 
it arises rather geometrically as a desingularisation of the
set of all Darboux transforms of $f$.
By a Darboux transform of $f$ we mean a conformally immersed torus
$\hat f\colon T^2\to S^4$ for which there is a $2$--sphere congruence
along $f$ touching $f$ and half--touching $\hat f$.  Darboux
transforms include the {\em classical} Darboux transforms for which
$\hat f$ also touches, rather than merely half--touches, the said
sphere congruence. In the classical case both surfaces $f$ and $\hat f $ are
isothermic.  Analytically the multiplier spectral curve of a
conformally immersed torus $f$ is given by the possible holonomies, or
``multipliers'', of quaternionic holomorphic sections for the
quaternionic holomorphic structure $D$ induced by $f$ on the
quaternionic bundle $V/L$. Here $L$ is the pull--back under $f$ of the
tautological bundle over $S^4=\H\P^1$ and $V$ is the trivial
$\H^2$--bundle.  Generically there is (up to scale) exactly one
quaternionic holomorphic section for a given multiplier in $\Sigma_m$
and one thereby obtains a $ T ^ 2 $-family of holomorphic line
bundles, the {\em kernel bundles}, over the multiplier spectral
curve. Again, in the case when $\Sigma_m$ has finite genus, this $T^2$
family of holomorphic line bundles sweeps out a subtorus \cite{ana} of the
Jacobian of $\Sigma_m$.  The Darboux transform of $f$ corresponding to
a multiplier $h\in\Sigma_m$ is then given by $\hat f=\hat{\varphi}\H$,
where $\hat\varphi$ is the prolongation to $V$ of the quaternionic
holomorphic section $\varphi$ of $V/L$ with holonomy $h$.

This paper discusses this general approach to spectral curves in the
context of constant mean curvature tori in $\R^3$: what are the
geometric properties of the Darboux transforms of a constant mean
curvature torus? Are the two spectral curves the same and how do the
eigenline and kernel bundles relate?

The first question we can answer even locally: given a (simply
connected) constant mean curvature surface $f\colon M\to\R^3$ any
parallel section of the flat connection $\nabla^{\mu}$ is quaternionic
holomorphic since the connections $\nabla^{\mu}$ all induce the same
quaternionic holomorphic structure $D$ on $V/L$. The prolongation of
any such parallel section is therefore a Darboux transform of $f$,
which we call a {\em $\mu$-Darboux transform}.  Note that there is a
$\C\P^1$-worth of parallel sections to a given $\mu$. We show that all
$\mu$--Darboux transforms of the constant mean curvature surface $f\colon M\to\R^3$
are again constant mean curvature surfaces, albeit in a parallel translated $\R^3\subset \R^4$.  
Amongst the classical Darboux
transforms of $f$ in $\R^3$, those which have constant mean curvature form a $3$-dimensional hyper-surface 
\cite{darboux_isothermic}. We show that this hyper-surface
consists entirely of $\mu$-Darboux transforms with $\mu\in\R\setminus\{0,1\}$.
For non-real $\mu$ the only other $\mu$-Darboux transforms  which are classical occur for unitary $\mu$ and 
give the parallel constant mean curvature surface. 

The second question concerns the global existence of Darboux
transforms of a constant mean curvature torus $f\colon T^2\to \R^3$.
Away from the points $P_0, P_{\infty}$ over $\mu=0,\infty$ a point $ x
$ on the eigenline spectral curve $\Sigma_e$ of $f$ is described by a
parallel section of $\nabla^{\mu}$ with holonomy.  This section
therefore gives a $\mu$-Darboux transform $\hat f^{x}\colon T^2\to
S^4$ defined on the same $2$--torus and its holonomy defines a map $h
$ into the multiplier spectral curve $\Sigma_m$. This map does not
extend to an isomorphism of the two curves; the eigenline curve is
algebraic and generically smooth whereas the multiplier curve is
always singular and may have infinite arithmetic genus.  This
discrepancy is resolved by desingularising: the lift of the map $h $
to the normalisations extends holomorphically, thereby compactifying
the normalisation of the multiplier spectral curve and yielding a
biholomorphism of the two desingularised curves.  The $\mu$-Darboux
transforms limit to the original constant mean curvature torus at
$P_0$ and $P_{\infty}$, yielding a geometric method for recovering the
original surface. A similar result has been proven in the more general
context of conformally immersed tori in the $4$-sphere having finite
spectral genus \cite{conformal_tori}. However, we utilise the
existence of a family of flat connections in the constant mean
curvature case to give a considerably simpler proof.  Since the kernel
and eigenline bundles lift to holomorphic line bundles which agree by
construction away from a discrete set of points, they are the same
bundle on the identified normalisations of the two spectral curves.
This implies that any Darboux transform of a constant mean curvature
torus $f\colon T^2\to \R^3$ which is para\-meter\-ised by the
multiplier curve is a $\mu$-Darboux transform and hence also a torus
of constant mean curvature in $\R^3$. However not all Darboux
transforms or even all classical Darboux transforms of a constant mean
curvature torus must again have constant mean curvature \cite
{Bernstein:01, rectangular}.  The space of all Darboux transforms may,
in addition to the multiplier curve, contain countably many
quaternionic and complex projective spaces.  There may similarly be
many $\mu$--Darboux transforms which are not parameterised by the
eigenline curve, and in the appendix we study the simple example of
the standard cylinder and show that these correspond to the adding of
bubbletons to the original constant mean curvature surface.


\section {Darboux Transformations}

\renewcommand{\hat}{\widehat}
We model the conformal geometry of the 4--sphere by the
quaternionic projective line $S^4=\HP^1$ on which the group of
orientation preserving M\"obius transformations acts by ${\bf GL}(2,\H)$. A
map $f: M\to S^4$ can be considered as a line subbundle $L\subset V$
of the trivial $\H^2$ bundle $V= \underline{\H}^2$, where the fibers
of $L$ are given by $L_p=f(p)$ for $p\in M$.  In other words,
$L=f^*\T$ is the pullback of the tautological line bundle $\T$ over
$\HP^1$. Identifying the tangent bundle of $\HP^1$ with
$\Hom(\T,\underline\H^2/\T)$ the derivative of $f$ is given by
\begin{equation}
\label{eq:delta}
\delta = \pi d|_L\in\Omega^1(\Hom(L,V/L))\,,
\end{equation}
where $d$ is the trivial connection on $V$ and $\pi: V\to V/L$ is the
canonical projection. Throughout the paper we denote by $\Hom(W_1,
W_2)$ the real vector space of quaternionic linear maps between
quaternionic (right) vector spaces $W_1$ and $W_2$. An immersion
$f\colon M \to S^4$ is conformal \cite{coimbra} if and only if there
exist complex structures $J\in\Gamma(\End(V/L))$ on $V/L$ and
$\widetilde J\in\Gamma(\End(L))$ on $L$ such that
\begin{equation}
\label{eq:conformality}
*\delta = J \delta = \delta\widetilde J\,,
\end{equation}
where $*$ is the conformal structure on $T^*M$.

An oriented round 2--sphere in $S^4=\HP^1$ is described by a complex
structure $S\in\End(\H^2)$, $S^2=-1$: points on the sphere are the
fixed lines of $S$. In particular, the corresponding line subbundle $L_S\subset V$
of the embedded round sphere $S$ satisfies $SL_S = L_S$.  Hence $S$ induces complex structures
$J$ on $V/L_S$ and $\widetilde J$ on $L_S$ and the conformality equation
of the  sphere $S$ is
\[
*\delta_S = S\delta_S = \delta_S S\,.
\]
A sphere congruence assigns to each point $p\in M$ an oriented round
sphere $S(p)$ in $S^4$. In other words, a sphere congruence is a
complex structure $S\in\Gamma(\End(V))$ on the trivial
$\H^2$--bundle $V$.  Given a conformal immersion $f: M \to S^4$ with
associated line bundle $L=f^*\T$, a sphere congruence $S$ \emph{envelopes} $f$ if
for all $p\in M$ the sphere $S(p)$ passes through $f(p)$ and the
oriented tangent plane to $f$ and to $S(p)$ at $f(p)$ coincide:
\begin{equation}
\label{eq:enveloping}
S L = L, \quad \text{ and   }  \quad *\delta = S\delta = \delta S\,.
\end{equation}
Note that $S$ induces the complex
structures $J = S_{V/L}$ and $\widetilde J = S|_L$ given by the
conformality \eqref{eq:conformality} of $f$.

Let $\omega\in\Omega^1(W)$ be a 1--form on $M$ with values in a vector
bundle $W$. If $W$ is equipped with a complex structure
$J\in\Gamma(\End(W))$, $J^2=-1$, we can decompose $\omega$ into its
$(1,0)$ and $(0,1)$--parts with respect to $J$, that is
\[
\omega = \omega' + \omega''
\]
where
\[
\omega' =\frac{1}{2}(\omega - J*\omega), \quad
\omega'' =\frac{1}{2}(\omega + J*\omega)\,.
\]
We denote by $\Gamma(KW)$ and $\Gamma(\bar K W)$ the $(1,0)$
respectively $(0,1)$--forms with values in the complex bundle $(W,
J)$. For instance, if $f: M \to S^4$ is a conformal immersion its
derivative $\delta\in\Gamma(K\Hom(L,V/L))$ is a $(1,0)$--form by
(\ref{eq:conformality}).

A conformal immersion $f: M \to S^4$ induces \cite{conformal_tori} an
elliptic first order differential operator
\[
D: \Gamma(V/L) \to \Gamma(\bar K V/L),
\]
 a so--called \emph{quaternionic
  holomorphic structure} on $V/L$ given by
\begin{equation}
\label{eq:holomorphic structure}
D\varphi = (\pi d\widetilde\varphi)''\,.
\end{equation}
Here $\widetilde\varphi\in\Gamma(V)$ is an arbitrary lift of
$\varphi\in\Gamma(V/L)$ under $\pi$.  The holomorphic structure $D$ is
well--defined since $\pi d|_L= \delta\in \Gamma(K\Hom(L, V/L))$ and
thus $ D\psi= (\delta\psi)''=0 $ for $\psi\in\Gamma(L)$. We denote by
\[
H^0(V/L) = \ker D
\]
the space of \emph{holomorphic sections} of $V/L$.

An important property of the holomorphic structure $D$ is that $f$ is
given as a quotient of holomorphic sections and Darboux transforms of
$f$ are given by prolongations of holomorphic sections.  If $W\to M$
is a quaternionic line bundle over $M$ we write $\widetilde{W}$ for
its pull--back to the universal cover $\widetilde{M}$. A section
$\varphi\in\Gamma(\widetilde{W})$ with monodromy is one which
satisfies
\[
\gamma^*\varphi = \varphi h_\gamma, \quad
h_\gamma\in\H_{*}
 \]
for a representation $h$ of the fundamental group
$\pi_1(M)$ acting by deck transformations on $\widetilde{M}$.

\begin{lemma}[\cite{conformal_tori}]
\label{lem:prolongation}
Let $f\colon M \to S^4$ be a conformal immersion, $L \subset V$
the associated line subbundle of $V$ and $\varphi\in
H^0(\widetilde{V/L})$ a non--trivial holomorphic section. Then
$\varphi$ has a unique lift, with respect to the projection
$\pi\colon V\to V/L$, to a section
$\hat\varphi\in\Gamma(\widetilde{V})$ such that
\begin {equation}
    \pi d \hat\varphi=0\,.\label {eq:prolongation}
\end {equation} 
Away from its (isolated) zeros the \emph{prolongation} $\hat\varphi$
of $\varphi$ defines a conformal map $\hat{f}\colon \widetilde{M} \to
S^4$, namely $\hat{f}(p)=\hat\varphi(p)\H$. If the holomorphic section
$\varphi\in H^0(\widetilde{V/L})$ has mono\-dromy then $\hat{\varphi}$ has the same monodromy and thus
$\hat{f}$ descends to a conformal map $\hat{f}\colon {M} \to S^4$.
\end{lemma}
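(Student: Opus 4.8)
The plan is to establish the three assertions in turn---existence and uniqueness of the prolongation, conformality of $\hat f$ away from zeros, and descent under monodromy---the middle one carrying the real content while the other two are essentially formal. For \emph{existence and uniqueness} I would note that any two lifts of $\varphi$ under $\pi$ differ by a section of $\widetilde L$, so I fix one lift $\hat\varphi_0\in\Gamma(\widetilde V)$ with $\pi\hat\varphi_0=\varphi$ and seek $\psi\in\Gamma(\widetilde L)$ with $\pi d(\hat\varphi_0+\psi)=0$. Since $\pi d|_L=\delta$, this is the equation $\delta\psi=-\pi d\hat\varphi_0$. By the definition of the holomorphic structure $D$, the $(0,1)$--part of $\pi d\hat\varphi_0$ equals $D\varphi=0$, so the right--hand side lies in $\Gamma(KV/L)$. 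Because $f$ is an immersion, $\delta$ is a nowhere--vanishing element of $\Gamma(K\Hom(L,V/L))$, so $\psi\mapsto\delta\psi$ is a bundle isomorphism $L\to KV/L$; hence $\psi$, and therefore $\hat\varphi=\hat\varphi_0+\psi$, exists and is unique. (Uniqueness is also immediate directly: two prolongations differ by $\psi\in\Gamma(\widetilde L)$ with $\delta\psi=0$, forcing $\psi=0$.)

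For \emph{conformality}, since $\varphi$ is holomorphic its zeros are isolated, and away from them the prolongation satisfies $\hat\varphi\notin L$, so $V=L\oplus\hat L$ with $\hat L=\hat\varphi\H$ and the projections restrict to isomorphisms $\pi|_{\hat L}\colon\hat L\to V/L$ and $\hat\pi|_L\colon L\to V/\hat L$, where $\hat\pi\colon V\to V/\hat L$. I would push $J$ forward to a complex structure on $\hat L$ and $\widetilde J$ forward to a complex structure $\hat J$ on $V/\hat L$. Because $\pi d\hat\varphi=0$ by \eqref{eq:prolongation}, we have $d\hat\varphi\in\Omega^1(\widetilde L)$, and under $\hat\pi|_L$ the derivative $\hat\delta$ of $\hat f=\hat\varphi\H$ is identified with $d\hat\varphi$ regarded as a map $\hat L\to L$. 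A short check then shows that $*\hat\delta=\hat J\hat\delta$ is equivalent to $*\,d\hat\varphi=\widetilde J\,d\hat\varphi$, i.e.\ to $d\hat\varphi$ being of type $(1,0)$ for $\widetilde J$ on $L$. Granting this, $\hat\delta$ is a nowhere--vanishing $(1,0)$--form, so \eqref{eq:conformality} is satisfied and $\hat f$ is conformal; it extends conformally across the zeros of $\varphi$.

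The step I expect to be the main obstacle is precisely \emph{the type of $d\hat\varphi$}, which I would extract from flatness of the trivial connection $d$. In a local conformal coordinate write $d\hat\varphi=a\,dx+b\,dy$ with $a,b\in\Gamma(\widetilde L)$. From $d^2\hat\varphi=0$ one gets $\del_x b=\del_y a$ in $\Gamma(\widetilde V)$; projecting by $\pi$ and using $a,b\in\Gamma(L)$ yields $\delta_x b=\delta_y a$, where $\delta_x,\delta_y$ denote the components of $\delta$. Conformality of $f$, namely $*\delta=\delta\widetilde J$, reads $\delta_y=-\delta_x\widetilde J$ in components, so $\delta_x(b+\widetilde J a)=0$; since $f$ is an immersion $\delta_x$ is a pointwise isomorphism, whence $b=-\widetilde J a$. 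This is exactly $*\,d\hat\varphi=\widetilde J\,d\hat\varphi$, which closes the conformality argument. Everything here reduces to bookkeeping with the splitting $V=L\oplus\hat L$, but this single computation is where holomorphicity of $\varphi$ and the immersion hypothesis are genuinely used.

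Finally, for the \emph{monodromy} statement, suppose $\gamma^*\varphi=\varphi h_\gamma$. Then $\gamma^*\hat\varphi$ is a lift of $\varphi h_\gamma$ with $\pi d(\gamma^*\hat\varphi)=\gamma^*(\pi d\hat\varphi)=0$, so it is the prolongation of $\varphi h_\gamma$; but $\hat\varphi h_\gamma$ is also such a prolongation, since $h_\gamma$ is constant and hence $\pi d(\hat\varphi h_\gamma)=(\pi d\hat\varphi)h_\gamma=0$. The uniqueness established in the first step forces $\gamma^*\hat\varphi=\hat\varphi h_\gamma$, so $\hat f=\hat\varphi\H$ is $\gamma$--invariant and therefore descends to a conformal map $\hat f\colon M\to S^4$.
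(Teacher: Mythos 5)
Your argument is, in substance, the proof this paper relies on: the lemma is imported from \cite{conformal_tori}, and the only trace of a proof in the present paper (a sketch for the special case of $\nabla^\mu$--parallel sections) is exactly your first step --- since $D\varphi=0$, any lift $\hat\varphi_0$ satisfies $\pi d\hat\varphi_0\in\Gamma(K\,V/L)$, and $\psi\mapsto\delta\psi$ is a bundle isomorphism $L\to K\,V/L$ because $f$ is immersed, so $\delta$ is a nowhere vanishing $(1,0)$--form whose values are invertible maps between quaternionic lines. Your coordinate computation of the type of $d\hat\varphi$ is the local version of the usual invariant one: $d\hat\varphi\in\Omega^1(\widetilde L)$ and $0=\pi d(d\hat\varphi)=\delta\wedge d\hat\varphi$, which together with $*\delta=\delta\widetilde J$ and pointwise invertibility of $\delta(X)$ forces $*d\hat\varphi=\widetilde J\,d\hat\varphi$; and your monodromy argument (constancy of $h_\gamma$ plus uniqueness of the prolongation applied to $\varphi h_\gamma$) is the standard one.

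Two points in your conformality paragraph need repair, though neither damages the main chain. First, pushing $J$ forward to $\hat L$ via $(\pi|_{\hat L})^{-1}$ does \emph{not} produce the complex structure $\widetilde{\hat J}$ appearing in $*\hat\delta=\hat\delta\,\widetilde{\hat J}$: that identity with this particular $\widetilde{\hat J}$ is precisely the second half of the enveloping condition, and asserting it would make every Darboux transform classical, contradicting Theorem~\ref{thm:DT is not CDT}. What your computation actually proves is only the left equation $*\hat\delta=\hat J\hat\delta$ with $\hat J$ the pushforward of $\widetilde J$ under $\hat\pi|_L$, i.e.\ the half--touching condition \eqref{eq:left--touch}; the right--hand structure then comes for free at immersed points by conjugation, $\widetilde{\hat J}:=\hat\delta(X)^{-1}\hat J\,\hat\delta(X)$, which one checks is independent of the choice of $X\neq 0$ precisely because $*\hat\delta=\hat J\hat\delta$. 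Second, your claim that $\hat\delta$ is nowhere vanishing is unjustified and in general false: prolongations can branch, and $\hat f$ can even be constant (for instance the $\mu$--Darboux transform $\hat f\equiv\infty$ at $\mu=1$ discussed after Lemma~\ref{lem: mu gives DT}), which is why the lemma asserts only a conformal map rather than a conformal immersion. The correct reading is that the type equation $*\hat\delta=\hat J\hat\delta$ holds everywhere as an equation of forms, the zeros of $\hat\delta$ are isolated for nonconstant $\hat f$, and $\widetilde{\hat J}$ extends across them; with these amendments, and likewise your asserted-but-standard extension across the isolated zeros of $\varphi$, your proof is complete and coincides with the cited one.
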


\begin{definition} Let $f\colon M \to S^4$ be a conformal immersion and
  $L \subset V$ be the associated line subbundle of $V$.  Conformal maps
  $\hat{f}$ defined (away from isolated points) by holomorphic
  sections of $\widetilde{V/L}$ are called {\em Darboux transforms} of
  $f$.
\end{definition}

Darboux transforms naturally generalise the classical Darboux
transforms since $\hat{f}$ is a Darboux transform of $f$ if and only
if (away from isolated points) there exists a sphere congruence $S$
enveloping $f$ and left--enveloping $\hat{f}$ \cite {conformal_tori}.
To say that $S$ \emph{left--envelopes} $\hat{f}$ means that
 \begin {equation}
  \label {eq:left--touch}
  S\hat{L}=\hat{L}\quad\text{and}\quad *\hat{\delta}=S\hat{\delta}\,,
\end {equation}
the latter expressing only half of the enveloping condition
\eqref{eq:enveloping}.

\begin{definition} Let $f\colon M\to S^4$ be a conformal immersion. A
  conformal map $f^\sharp: M \to S^4$ is called a \emph{classical
    Darboux transform} of $f$ if $f(p)\not=f^\sharp(p)$ for all $p\in
  M$ and if there exists a sphere congruence enveloping both $f$ and
  $f^\sharp$. In this case $(f, f^\sharp)$ is called a \emph{classical
    Darboux pair}.
\end{definition}
Darboux \cite{darboux} showed that $(f, f^\sharp)$ form a classical
Darboux pair if and only if $f$ and $f^\sharp$ are both
\emph{isothermic}, that is $f$ and $f^\sharp$ allow conformal
curvature line parametrisations away from umbilic points.

We now express the condition for $(f, f^\sharp)$ to be a classical
Darboux pair in terms of the derivatives $\delta$ and $\delta^\sharp$
of $f$ and $f^\sharp$. Given two surfaces $f, f^\sharp: M \to S^4$
with $f(p)\not=f^\sharp(p)$ for all $p\in M$, the trivial
$\H^2$--bundle $V$ splits as
\[
V = L\oplus L^\sharp\,.
\]
We write the trivial connection $d$ on $V$ in this
splitting as
\[
d = \begin{pmatrix}  \nabla^L & \delta^\sharp \\
\delta & \nabla^\sharp
\end{pmatrix}
\,,
\]
where $\nabla^L$ and $\nabla^\sharp$ are connections on $L$ and
$L^\sharp$ respectively. Moreover,
\[
\delta\in\Omega^1(\Hom(L,L^\sharp)) \quad
 \text{and}\quad  \delta^\sharp\in\Omega^1(\Hom(L^\sharp,L))\,,
 \]
 are the derivatives of $f$ and $f^\sharp$ when we identify $V/L =
 L^\sharp$ and $V/L^\sharp= L$ via the bundle isomorphisms
 $\pi|_{L^\sharp}: L^\sharp\to V/L$ and $\pi^\sharp|_L:L\to
 V/L^\sharp$.
 \begin{lemma}[\cite{coimbra}]
   \label{lem:dt with derivatives} Let $f, f^\sharp: M \to S^4$ be
   conformal immersions. Then $(f,f^\sharp)$ is a classical Darboux
   pair if and only if $L\oplus L^\sharp = V$, and
\begin{equation}
\label{eq:DT}
\delta^\sharp \wedge \delta = \delta\wedge \delta^\sharp = 0\,.
\end{equation}
\end{lemma}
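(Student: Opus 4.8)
The plan is to handle both implications through the off--diagonal part of the trivial connection. Writing $d = \begin{pmatrix} \nabla^L & \delta^\sharp \\ \delta & \nabla^\sharp \end{pmatrix}$ in the splitting $V=L\oplus L^\sharp$, set $\Psi = \begin{pmatrix} 0 & \delta^\sharp \\ \delta & 0 \end{pmatrix}\in\Omega^1(\End V)$. Computing $\Psi\wedge\Psi$ blockwise, its diagonal entries are exactly $\delta^\sharp\wedge\delta\in\Omega^2(\End L)$ and $\delta\wedge\delta^\sharp\in\Omega^2(\End L^\sharp)$ while its off--diagonal entries vanish; hence \eqref{eq:DT} is equivalent to the single identity $\Psi\wedge\Psi=0$. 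I will also record two pointwise facts: at each point and each nonzero tangent vector $X$ the maps $u:=\delta(X)\colon L\to L^\sharp$ and $w:=\delta^\sharp(X)\colon L^\sharp\to L$ are invertible, since $f,f^\sharp$ are immersions and the bundles have quaternionic rank one; and conformality \eqref{eq:conformality} of $f$ and of $f^\sharp$ says precisely that $u$ intertwines the complex structures $\widetilde J$ on $L$ and $J$ on $V/L\cong L^\sharp$, i.e.\ $u^{-1}Ju=\widetilde J$, while $w$ intertwines $\widetilde J^\sharp$ on $L^\sharp$ and $J^\sharp$ on $V/L^\sharp\cong L$, i.e.\ $w^{-1}J^\sharp w=\widetilde J^\sharp$.

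For the forward implication, suppose $(f,f^\sharp)$ is a classical Darboux pair, so $f(p)\neq f^\sharp(p)$ gives $V=L\oplus L^\sharp$ and there is a sphere congruence $S$ enveloping both surfaces. Enveloping forces $SL=L$ and $SL^\sharp=L^\sharp$, so $S=\mathrm{diag}(a,b)$ is diagonal in the splitting, and the two enveloping conditions \eqref{eq:enveloping} read $*\delta = b\delta = \delta a$ and $*\delta^\sharp = a\delta^\sharp = \delta^\sharp b$. Reading these at $\delta(X)=u$, $\delta^\sharp(X)=w$ yields $bu=ua$ and $aw=wb$, which is exactly the statement that $S$ commutes with $\Psi(X)$ for every $X$. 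Feeding this together with $\Psi(jX)=\pm S\Psi(X)$ (the relation $*\Psi=S\Psi$) into the defining formula for the wedge gives $(\Psi\wedge\Psi)(X,jX)=0$, hence \eqref{eq:DT}.

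For the converse I would build the enveloping congruence by hand. The natural candidate is the block--diagonal $S=\mathrm{diag}(\widetilde J, J)$ assembled from the conformal complex structures of $f$: this $S$ satisfies $S^2=-1$, preserves both $L$ and $L^\sharp$, and envelopes $f$ tautologically. The entire content is therefore that this same $S$ envelopes $f^\sharp$, equivalently that the conformal complex structures of $f^\sharp$ satisfy $J^\sharp=\widetilde J$ and $\widetilde J^\sharp = J$. Here the hypothesis \eqref{eq:DT} enters: evaluating $\delta\wedge\delta^\sharp=0$ at $(X,jX)$ and using the conformality relations for $\delta,\delta^\sharp$ produces $J\,Q = Q\,\widetilde J^\sharp$ with $Q = uw$ invertible, that is $\widetilde J^\sharp = Q^{-1}JQ$, and similarly $\delta^\sharp\wedge\delta=0$ gives $\widetilde J = R^{-1}J^\sharp R$ with $R=wu$.

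The main obstacle — and the step I would treat most carefully — is that these two identities merely assert that the relevant complex structures are \emph{conjugate}, which is automatic for any two quaternionic complex structures and so carries no information on its own. The resolution is to feed the intertwining relations back in: since $u^{-1}Ju=\widetilde J$ and $w^{-1}J^\sharp w=\widetilde J^\sharp$ by conformality, the conjugation $\widetilde J^\sharp = (uw)^{-1}J(uw)=w^{-1}(u^{-1}Ju)w=w^{-1}\widetilde J w$ collapses, upon comparison with $\widetilde J^\sharp=w^{-1}J^\sharp w$, to the genuine equality $\widetilde J = J^\sharp$; symmetrically, $\widetilde J = (wu)^{-1}J^\sharp(wu)=u^{-1}\widetilde J^\sharp u$ compared with $u^{-1}Ju=\widetilde J$ yields $\widetilde J^\sharp = J$. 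With both equalities in hand, $S=\mathrm{diag}(\widetilde J, J)$ envelopes $f^\sharp$ as well, which exhibits the Darboux sphere congruence and completes the proof.
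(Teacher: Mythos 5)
Your proof is correct and takes essentially the same route as the paper: your forward direction is the paper's type argument that $\delta$ and $\delta^\sharp$ are $(1,0)$ with respect to the enveloping congruence $S$ (the commutation $[S,\Psi(X)]=0$ together with $*\Psi=S\Psi$ is exactly that computation), and your converse derives $\widetilde J=J^\sharp$ and $\widetilde J^\sharp=J$ from conformality plus \eqref{eq:DT} and then envelopes both surfaces with the block--diagonal $S=\mathrm{diag}(J^\sharp,J)$, precisely as in the paper. The only difference is expository: you spell out the step the paper leaves implicit, namely that the conjugacy relations $\widetilde J^\sharp=(uw)^{-1}J(uw)$ and $\widetilde J=(wu)^{-1}J^\sharp(wu)$ collapse to genuine equalities once the intertwining relations $Ju=u\widetilde J$ and $J^\sharp w=w\widetilde J^\sharp$ are fed back in.
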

\begin{proof} Let $(f,f^\sharp)$ be a classical Darboux pair with
  enveloping sphere congruence $S$.  Then
\[
*\delta=S\delta=\delta S \quad \text{ and }\quad *\delta^\sharp=S\delta^\sharp=\delta^\sharp S
\]
say that $\delta$ and $\delta^{\sharp}$ have type $(1,0)$ with respect
to the complex structure $S$ and so
\[
\delta\wedge \delta^\sharp=\delta^\sharp\wedge \delta=0\,.
\]
Conversely, the conformality equations
\[
*\delta = J \delta = \delta \widetilde J \quad \text{ and } *\delta^\sharp
= J^\sharp \delta^\sharp = \delta^\sharp \widetilde J^\sharp \,
\]
for $f$ and $f^\sharp$   together with (\ref{eq:DT}) give  $\widetilde J
= J^\sharp$ and $\widetilde J^\sharp = J$.  Hence the complex structure
\[
S = \begin{pmatrix} J^{\sharp} & 0\\
0 & J
\end{pmatrix}
\]
expressed in
the splitting $V = L \oplus L^\sharp$ envelopes $f$ and
$f^\sharp$ and therefore $(f,f^\sharp)$ form a classical Darboux pair.
\end{proof}

So far our considerations have been M\"obius invariant. Choosing a
point at infinity $\infty\in S^4$, lying neither on $f$ nor
$f^\sharp$, we can consider $f, f^\sharp\colon M \to\H$ as maps into
Euclidean 4--space. Then we can write $f^\sharp = f+T$ with $T: M \to
\H_*$ provided that for all $p\in M$, we have $f(p)\not= f^\sharp(p)$.
We may take $\infty = e\H$ with $ e
=\begin{pmatrix}1\\
 0
\end{pmatrix}\in\H^2
$
so that
\begin{equation}
\label{eq:psi}
\psi=\begin{pmatrix} f \\
1
\end{pmatrix}\in\Gamma(L), \quad \psi^\sharp=\begin{pmatrix} f^\sharp \\
1
\end{pmatrix}
\in\Gamma(L^\sharp)
\end{equation}
give trivialisations of $L$ and $L^\sharp$ respectively.  The
derivatives of $f$ and $f^\sharp$ in the splitting $V = L
\oplus L^\sharp$ then calculate to
\begin{equation}
\label{eq:delta in coordinates}
\delta\psi= \mbox {pr}_{L^\sharp}\begin{pmatrix} df \\
0
\end{pmatrix}
=\psi^\sharp T\invers df \quad \text{ and } \quad  \delta^\sharp\psi^\sharp= \mbox {pr}_{L}\begin{pmatrix} df^\sharp \\
0
\end{pmatrix}
=-\psi T\invers df^\sharp\,.
\end{equation}
If $f$ and $f^\sharp$ are a classical Darboux pair, then (\ref{eq:DT}) yields
\begin{equation}
\label{eq:isothermic}
 T\invers df^\sharp T\invers \wedge df =df \wedge T\invers df^\sharp T\invers=0\,,
\end{equation}
and thus
\[
d(T\invers df^\sharp T\invers)=
-T\invers d T T\invers \wedge df^\sharp
 T\invers
+ T\invers df^\sharp\wedge T\invers d T T\invers=
0\,,
\]
where we used $df^\sharp = df + dT$.  Therefore, locally $T\invers
f^\sharp T\invers= f^d$ for a conformal immersion $f^d: M\to\R^4$
satifisfying
\begin{equation}
\label{eq:dual}
df\wedge df^d = df^d \wedge df =0\,.
\end{equation}
A conformal map $f^d\colon M \to\R^4$ satisfying (\ref{eq:dual}) is
called a \emph{dual surface} to $f$. If a dual surface exists it is
unique up to translation and a real scaling \cite{udos_habil}.  As we
have seen any isothermic surface $f\colon M \to\R^4$ admits a dual surface
$f^d$.  The converse also holds as can be seen by reversing the above calculations: we obtain classical Darboux transforms
$f^\sharp$ by solving the Riccati equation
\begin{equation}
\label{eq: riccati}
dT = -df + Tdf^d T
\end{equation}
and putting $f^\sharp = f+ T: M \to\R^4$.  This is a well--known
description of isothermic surfaces via their dual surfaces
\cite{darboux_isothermic}.

\section{Constant mean curvature surfaces}
\label{sec:cmc}
We now turn to the case when the immersion $f\colon M\to\R^3$ has
constant mean curvature. Then $f$ is isothermic and applying the
classical Darboux transformation we obtain isothermic surfaces. These
have constant mean curvature when the initial condition for the
Riccati equation (\ref{eq: riccati}) is chosen appropriately
\cite{darboux_isothermic}. On the other hand, a surface of constant
mean curvature has an associated $\C_*$--family of flat connections
$\nabla^\mu$. We show that the parallel sections of these connections
give rise to Darboux transforms of $f$ which we call
\emph{$\mu$--Darboux transforms}. Only for special values of the spectral
parameter $\mu$ do they become classical Darboux transforms.

We view $\R^3=\Im\H$ as the imaginary quaternions. Then the Gauss map $N:
M\to S^2\subset\R^3$ of a conformal immersion $f: M\to\R^3$ satisfies
\[
*df  = N df = -df N\
\]
and $N$ is harmonic if and only if $f$ has constant mean curvature
\cite{RV:70}. The harmonicity condition for $N: M \to S^2 \subset\R^3$
is given by
\begin{equation}
\label{eq:Gauss map harmonic}
d(dN)''=0
\end{equation}
where $(dN)'' = \frac{1}{2}(dN + N*dN)$ is the (0,1)--part of $dN$
with respect to $N$. Note that $N$ is a complex structure since
$N^2=-1$. The splitting of $dN$
\[
dN = (dN)' + (dN)''
\]
into $(1,0)$ and $(0,1)$--parts is \cite[p.40]{coimbra} the
decomposition of the shape operator into trace and tracefree parts so
that
\begin{equation}
\label{eq:def H}
(dN)' = - H df,
\end{equation}
where $H$ is the mean curvature of $f$. With this normalisation the
mean curvature of the unit sphere with respect to the inward normal is
$H=1$. From now on we assume that we have scaled our constant mean curvature
surfaces so that  $H=1$.  Then the parallel
constant mean curvature surface
\[
g=f+N
\]
satisfies $ dg = (dN)''$ and thus type considerations give $
dg\wedge df = df \wedge dg =0\,.$ In other words, if $f: M \to\R^3$
has constant mean curvature then the parallel surface $g=f+N$ is a
dual surface of $f$ which shows that $f$ is isothermic.  We consider
$f: M \to\R^3$ as a conformal immersion into $S^4$ via
\[
\begin{pmatrix} f\\
1
\end{pmatrix}\H: M \to \HP^1\,,
\]
where the point at infinity is $e\H$ with $e =\begin{pmatrix} 1\\0
\end{pmatrix}\in\H^2$. The conformality of $f$ gives complex structures $J$ on
$V/L$ and $\widetilde J$ on $L$ satisfying (\ref{eq:conformality}).
Identifying $V/L$ with $\Li$ via the splitting $V= L\oplus \Li$ these
complex structures are given by
\begin{equation}
\label{eq:J}
J e =  e N \quad \text{ and } \quad \widetilde J\psi = -\psi N
\end{equation}
for the trivialising section $\psi=\begin{pmatrix} f\\1
\end{pmatrix}$ of $L$. In particular, for a constant mean curvature
surface the complex structure $J\in\Gamma(\End(V/L))$ is harmonic. Let
$\nabla$ denote the trivial connection on $V/L = \Li$ induced by the
trivial connection $d$ on $V = \underline{\H}^2$. From (\ref{eq:J})
and (\ref{eq:Gauss map harmonic}) we see that the harmonicity equation
for $J$ is
 \[
d^\nabla(\nabla J)'' = 0\,.
\]
With the  notation
\[
(\nabla J)' = -2*A \quad \text{ and } \quad  (\nabla J)'' = 2*Q
\]
 for the $(1,0)$ and $(0,1)$ parts, the harmonicity of $J$ becomes
\begin{equation}
\label{eq:harmonicity}
d^\nabla *A =0 \quad \text{ or equivalently} \quad d^\nabla*Q =0\,.
\end{equation}
Since $J^2=-1$ we see that $\nabla J$ anticommutes with $J$ and therefore
\[
*A = JA = - AJ \quad \text{ and } \quad *Q = -JQ = QJ\,.
\]
To reformulate the harmonicity of $J$ as a $\C_*$--family of flat
${\bf SL}(2,\C)$-con\-nec\-tions we introduce the constant complex
structure $I$, which is defined as right multiplication $I\varphi =
\varphi i$ by the quaternion $i$. With this complex structure $V/L
=\underline{\C}^2$ can be viewed as a trivial $\C^2$-bundle. The next
lemma is a variant \cite{klassiker} of the well-known formulation of
harmonicity in terms of families of flat connections.

\begin{lemma}
\label{lem:family_unit_circle}
Let $J\in\Gamma(\End(V/L))$ be a complex structure on $V/L$ with flat
connection $\nabla$. Then $J$ is harmonic if and only if the complex
connections
\begin{equation*}
\nabla^\mu = \nabla+ *A(J\frac{\mu+\mu\invers-2}{2} + \frac{\mu\invers-\mu}{2}I)
\end{equation*}
on the complex bundle $(V/L, I)$ are flat for all $\mu = u+ Iv\not=0$,
$u,v\in\R$.
\end{lemma}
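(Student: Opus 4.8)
The plan is to compute the curvature of $\nabla^\mu$ directly and read off the stated equivalence. Write $\Phi_\mu = *A(J\tfrac{\mu+\mu\invers-2}{2}+\tfrac{\mu\invers-\mu}{2}I)$ for the $\End(V/L)$--valued $1$--form added to $\nabla$. Since quaternionic endomorphisms commute with right multiplication by $i$, the endomorphism $\Phi_\mu$ commutes with $I$, so $\nabla^\mu$ really is a connection on the complex bundle $(V/L,I)$; and as $\nabla$ is flat its curvature is $R^{\nabla^\mu}=d^\nabla\Phi_\mu+\Phi_\mu\wedge\Phi_\mu$. The whole argument reduces to showing that this $2$--form equals an explicit, generically invertible, multiple of $d^\nabla{*A}$, whose vanishing is exactly the harmonicity $d^\nabla{*A}=0$ of \eqref{eq:harmonicity}.

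First I would simplify $\Phi_\mu$. Using $*A=JA=-AJ$ one gets $*A\cdot J = A$, while the centrality of $I$ gives $*A\cdot I = I{*A}$; abbreviating $\alpha=\tfrac{\mu+\mu\invers-2}{2}$ and $\beta=\tfrac{\mu\invers-\mu}{2}$ this yields the convenient form $\Phi_\mu=\alpha A+\beta I{*A}$. For the quadratic term I would invoke the surface identities $*\eta\wedge*\zeta=\eta\wedge\zeta$ and $A\wedge{*A}+{*A}\wedge A=0$ (both immediate in a local conformal coframe, where $**=-1$ on $1$--forms); the cross terms then drop and $\Phi_\mu\wedge\Phi_\mu=(\alpha^2-\beta^2)A\wedge A$. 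For the derivative term, $\nabla I=0$ gives $d^\nabla\Phi_\mu=\alpha\,d^\nabla A+\beta I\,d^\nabla{*A}$, and substituting $A=-J{*A}$ together with $\nabla J=-2{*A}+2{*Q}$ produces $d^\nabla A = 2A\wedge A-2Q\wedge A-J\,d^\nabla{*A}$.

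Two cancellations then close the computation, and these are the points I expect to demand the most care. The first is that the type relations force $Q\wedge A=0$: in a conformal coframe $*A=JA$ gives $A_2=-JA_1$, while $*Q=-JQ$ and $*Q=QJ$ give $Q_2=JQ_1$ and $Q_1J=-JQ_1$, so the $e^1\wedge e^2$--coefficient $Q_1A_2-Q_2A_1=-(Q_1J+JQ_1)A_1$ vanishes. The second is the arithmetic identity $\alpha^2-\beta^2=(\mu-1)(\mu\invers-1)=-2\alpha$, which is exactly what makes the two $A\wedge A$ contributions cancel. Assembling the pieces I expect
\[
R^{\nabla^\mu}=(-\alpha J+\beta I)\,d^\nabla{*A}\,.
\]

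It remains to convert this into the equivalence. If $J$ is harmonic then $d^\nabla{*A}=0$ and $R^{\nabla^\mu}=0$ for every $\mu\neq0$. Conversely, since $I$ and $J$ commute, $(-\alpha J+\beta I)(\alpha J+\beta I)=\alpha^2-\beta^2=-2\alpha$, a nonzero element of the field $\R\oplus\R I\cong\C$ as soon as $\mu\neq1$; hence $-\alpha J+\beta I$ is an invertible bundle endomorphism for every $\mu\neq0,1$, and flatness at any one such $\mu$ forces $d^\nabla{*A}=0$, that is, $J$ harmonic. The only genuine subtleties are the bookkeeping in the curvature expansion and spotting the two cancellations above; once $R^{\nabla^\mu}=(-\alpha J+\beta I)\,d^\nabla{*A}$ is established the equivalence follows at once.
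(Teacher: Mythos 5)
Your proof is correct and takes essentially the same route as the paper: a direct computation of $R^{\nabla^\mu}=d^\nabla\Phi_\mu+\Phi_\mu\wedge\Phi_\mu$ resting on the same two key facts, $Q\wedge A=0$ by type and the algebraic identity $a^2+b^2=1$ (your $\alpha^2-\beta^2=-2\alpha$ is the same relation), arriving at the curvature formula the paper states as $R^\mu=(d^\nabla *A)\left(J(a-1)+b\right)$ --- consistent with your left factorisation because $(d^\nabla *A)J=-J\,d^\nabla(*A)$. Your explicit invertibility argument for the converse (the coefficient endomorphism is invertible for $\mu\neq 0,1$) merely spells out a step the paper leaves implicit.
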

\begin{rem}
\label{rem:lambda}
There are a number of useful ways to rewrite the family of flat connections
$\nabla^\mu$.  If we put
\begin{equation*}
a = \frac{\mu+\mu\invers}{2}, \quad b =\frac{\mu\invers-\mu}{2}I
\end{equation*}
then $a^2+b^2=1$ and
\begin{equation}
\label{eq:complex_family}
\nabla^\mu = \nabla+ *A(J(a-1)+b) = \nabla+(a -1 + Jb)A
\,,
\end{equation}
where we used $*A = JA =-AJ$ and $[A,I]=0$.  On the other hand, using
the type decomposition
\[
A^{(1,0)} =\frac{1}{2}(A - I*A), \quad A^{(0,1)} =\frac{1}{2}(A+I*A)
\]
of $A$  with respect to the complex structure $I$, we obtain
\begin{equation}
\label{eq:nablaIdec}
\nabla^\mu = \nabla + (\mu-1) A^{(1,0)} + (\mu\invers -1)A^{(0,1)}\,.
\end{equation}
Note that for $\mu\in S^1$, that is $a, b\in\R$, the connection $
\nabla^\mu $ is in fact quaternionic whereas $\nabla^\mu$ is a complex
connection for $\mu\not\in S^1$ since the complex structure $I$ is not
quaternionic linear. Moreover, we see from \eqref{eq:nablaIdec} that
\begin {equation}
(\nabla ^\mu\phi)j =\nabla ^ {\bar\mu^ {- 1}}(\phi j)
\label {eq:reality}
\end {equation}
for $\phi\in\Gamma(V/L)$.
\end{rem}
\begin{proof}[Proof of Lemma \ref{lem:family_unit_circle}]
We have $\nabla J = 2(*Q-*A)$, and by type
  considerations we see that $A\wedge Q =0$, so
 \[
d^\nabla(*AJ) = (d^\nabla*A)J - *A \wedge \nabla J = (d^\nabla*A)J +2 *A \wedge *A\,.
\]
{F}rom this the curvature of $\nabla^\mu$ computes to
\[
R^\mu = (d^\nabla*A)(J(a-1) +b)
\]
where we used $[A,I]=0$ and $a^2 + b^2=1$.  This shows that
$\nabla^\mu$ is a flat  connection for every $\mu\in \C_*$ precisely when
$d^\nabla*A=0$, that is, if and only if $J$ is harmonic.
\end{proof}

The parallel sections of $\nabla^\mu$ for $\mu\in\C_*$ can be given a
geometric interpretation in terms of Darboux transforms of $f$. One
observes from \eqref{eq:holomorphic structure} that the trivial
connection $\nabla$ is compatible with the holomorphic structure $D$
on $V/L$, that is $\nabla''=D$. Since $*A=JA$, equation
\eqref{eq:complex_family} then shows that also
 \[
 (\nabla^{\mu})''=D\,.
\]
Hence $\nabla^{\mu}$--parallel sections of $V/L$ are in particular
holomorphic without zeros and their prolongations give Darboux
transforms $\hat f$ of $f$ defined on all of $\widetilde M$.  Since
$f(p)\neq \hat{f}(p)$ for all $p\in \widetilde{M}$ we have the
splitting $L\oplus \hat L = V$.

\begin{definition}
  Let $f\colon M \to\R^3$ be a constant mean curvature surface. The
  Darboux transforms $\hat f\colon \widetilde M \to S^4$ given by
  sections of $\widetilde{V/L}$ that are parallel with respect to
  $\nabla^\mu$ for $\mu\in\C_*$ are called {\em $\mu$--Darboux
    transforms} of $f$.
\end{definition}

\begin{figure}[h]
\begin{center}
\includegraphics[width=0.45\linewidth]{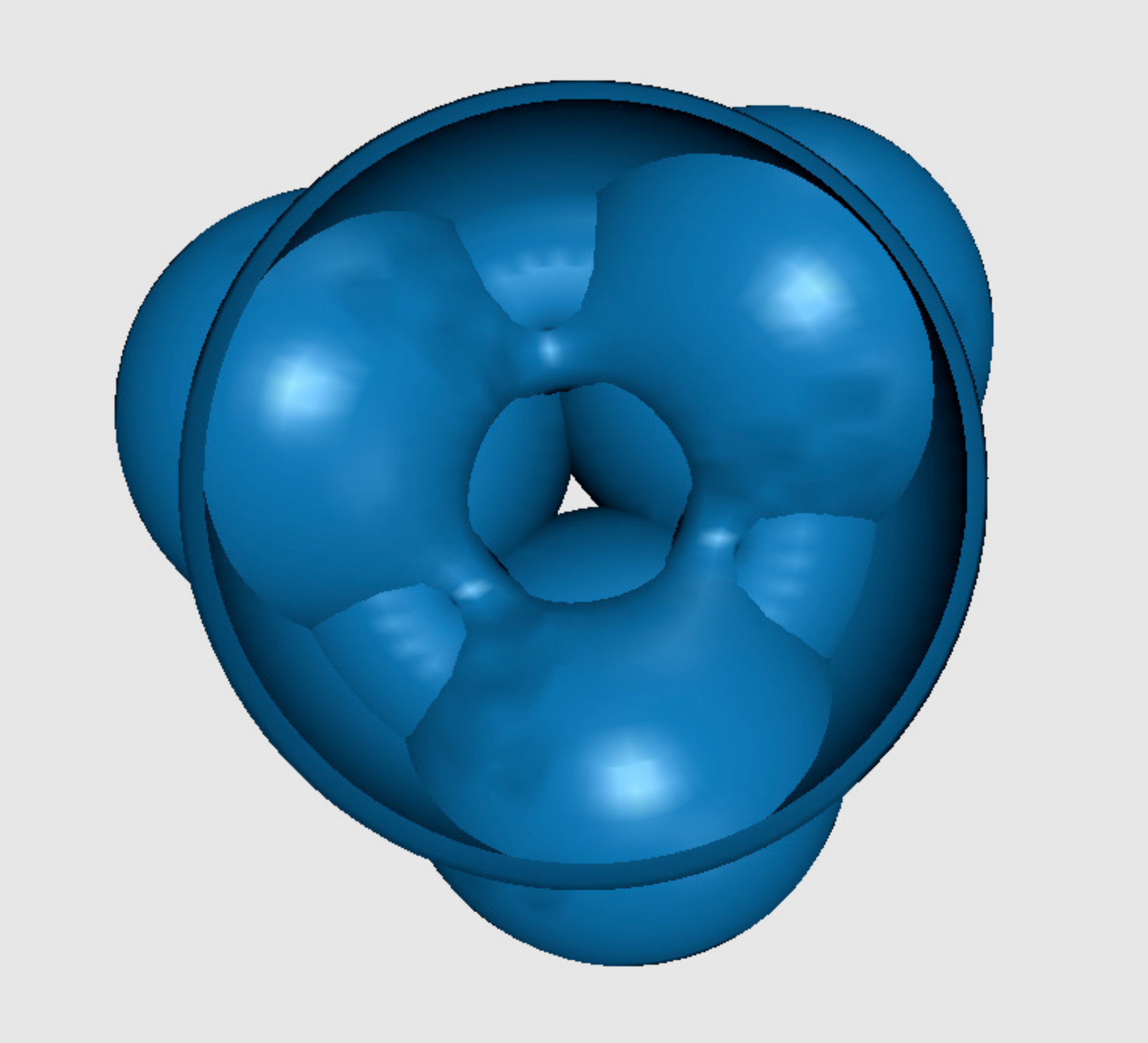}
\includegraphics[width=0.45\linewidth]{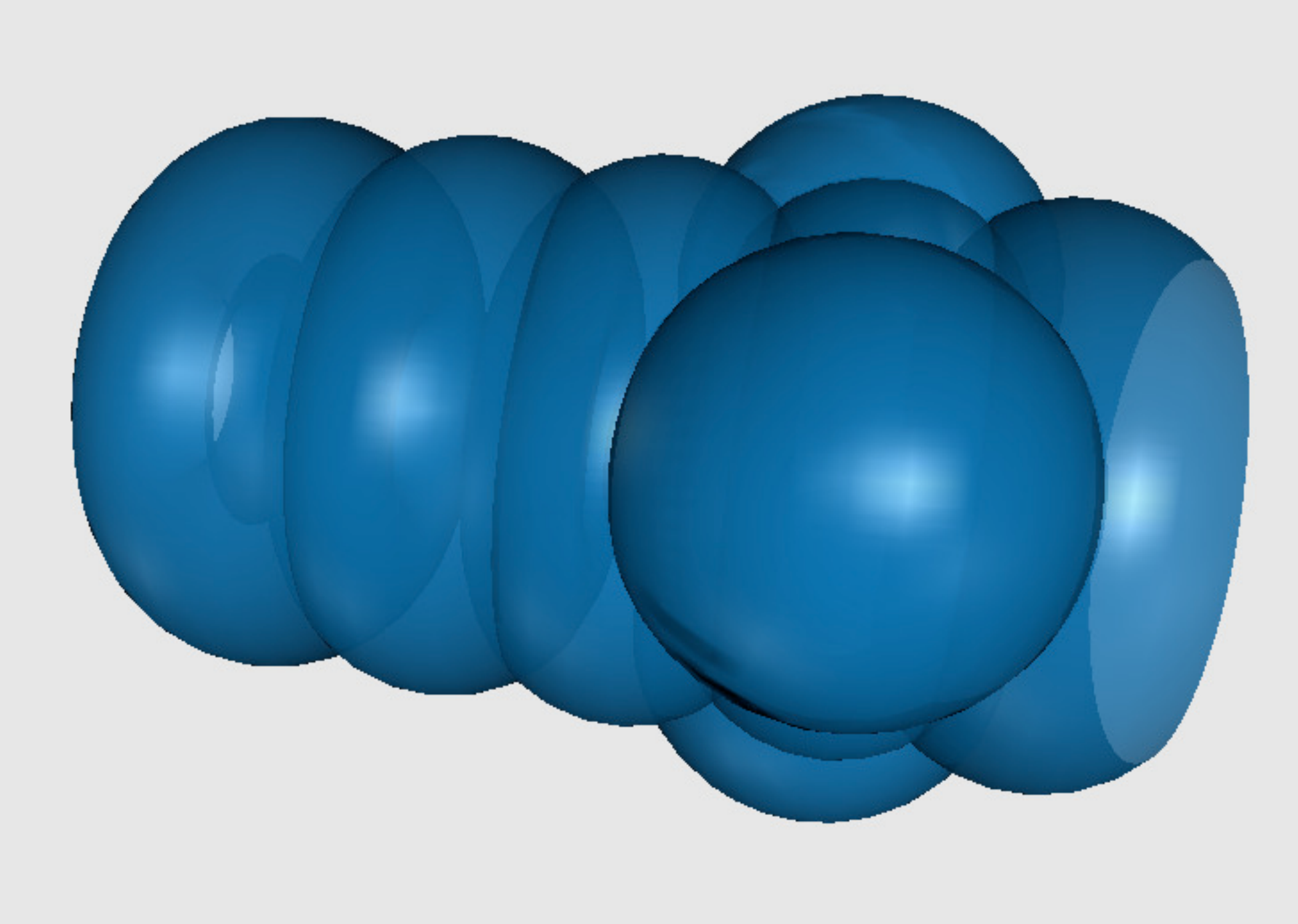}
\caption{
Closed $\mu$--Darboux transform of a nodoid.}
\label{fig:2Bubbleton}
\end{center}
\end{figure}

If $M$ has topology it is generally difficult to decide whether a
constant mean curvature surface $f$ has a $\mu$--Darboux transform
$\hat f$ defined on $M$ rather than on its universal covering $\widetilde
M$.  If $M$ is a 2--torus this question leads to the notion of the
spectral curve which we will address in section 4. For now we are only
interested in the local properties of $\mu$--Darboux transforms $\hat
f$ and assume that $M$ is simply connected. Then there is a
$\CP^1$--worth of $\nabla^\mu$--parallel sections for each
$\mu\in\C_*$ and thus the space of $\mu$--Darboux transforms is parameterised by $\C_{*}\times\CP^1$.
\begin{figure}[h]
\begin{center}
\includegraphics[width=0.45\linewidth]{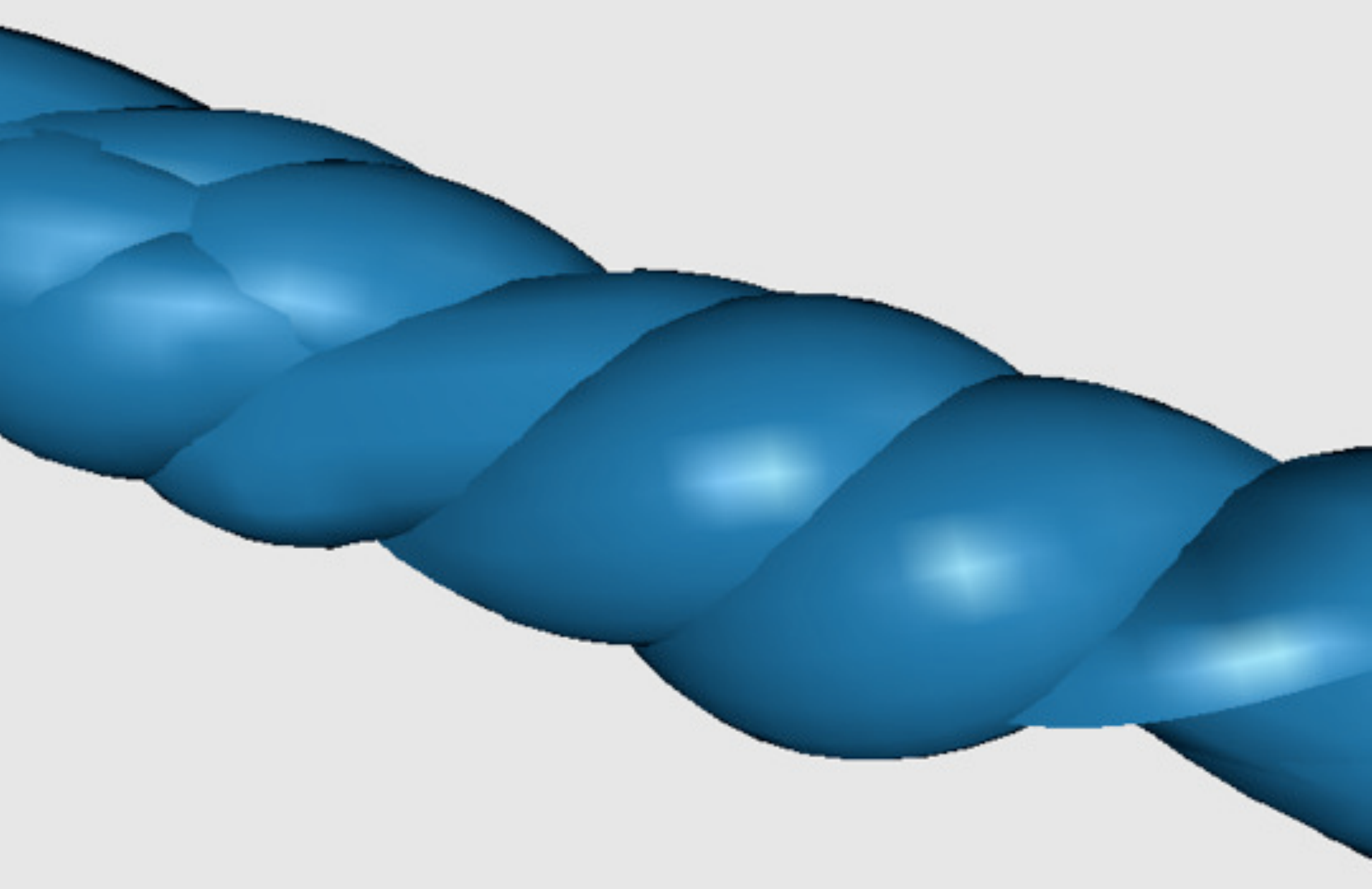}
\caption{Non-closed $\mu$--Darboux transform of an unduloid.}
\end{center}
\end{figure}

To better understand the geometry of $\mu$-Darboux transforms we
reinterpret the prolongation in terms of a bundle homomorphism, the
\emph{prolongation map}.

Let $\varphi\in\Gamma(V/L)$ be a $\nabla^\mu$--parallel section. The
splitting $V = L \oplus \Li$ identifies $V/L =\Li$ so that the
prolongation
\begin{equation}
\hat\varphi = \varphi + \hat B\varphi
\end{equation}
of the nowhere vanishing section $\varphi$ defines an $\H$--linear
bundle map
\begin{equation*}
\hat B: \Li\to L\,.
\end{equation*}
From
$
0=\pi \nabla \hat\varphi$ and $\nabla^\mu\varphi=0$  we deduce
\begin{equation}
\label{eq:Bmuc}
\delta \hat B\varphi = *A(J(a-1) + b)\varphi\,.
\end{equation}
Here $a=\frac{\mu+\mu\invers}2, b=\frac{\mu\invers-\mu}2 I$ for
$\mu\in\C_*$ with respect to the constant complex structure $I$ on
$V/L$ given by right multiplication by the quaternion $i$. We denote
by $\hat I$ the quaternionic linear complex structure on $V/L$ given
by the quaternionic linear extension of $I$ on $\varphi$, that is, $\hat I
\varphi = \varphi i$. Furthermore,  $\hat a, \hat b\in\Gamma(\End(V/L)$ denote
the quaternionic linear bundle maps obtained from $a, b$ by replacing
$I$ with $\hat I$.  Then
\begin{equation}
\label{eq: delta hat B}
\delta \hat B = *A(J(\hat a-1) + \hat b)\in\Gamma(\End(V/L))
\end{equation}
and
\begin{equation}
\label{eq:quater conn}
\hat\nabla^\mu = \nabla +*A(J(\hat a-1)+\hat b)
\end{equation}
is a family of flat quaternionic connections on $V/L$ with
\begin{equation}\label{eq:hat I parallel}
\hat\nabla^\mu\hat I = 0\,.
\end{equation}
Note that $\hat I$ and therefore $\hat\nabla^\mu$ depend on the choice
of the $\nabla^\mu$--parallel section $\varphi$. We indicate this
dependence by decorating $\varphi$ dependent quantities by the ``hat''
symbol. In what follows we abbreviate
\begin{equation}
\label{eq:C}
\hat C = J(\hat a-1) + \hat b
\end{equation}
and thus (\ref{eq: delta hat B}) becomes
\begin{equation}
\label{eq: delta B = AC}
\delta\hat B = *A \hat C\,.
\end{equation}

In order to see that a $\mu$--Darboux transform $\hat f$ of a constant
mean curvature surface $f$ is up to translation in $\R ^ 4 $ a constant mean
curvature surface in $\R^3$, we need to compare geometric data of $f$
and $\hat f$ with respect to the same choice of point at infinity
$\infty=e\H$. For that we need to see that $\hat f$ does not pass
through $\infty$.  Since $f$ is an immersion its derivative $\delta$
has no zeros, so as $\delta$ and $*A$ have the same type,
\begin{equation}
\label{eq:A=delta}
*A = \delta R
\end{equation}
for $R\in\Gamma(\Hom(V/L, L))$. Evaluating both sides of
\eqref{eq:A=delta} on the constant section $e$ we obtain $Re =
\frac{1}{2}\psi$ where we recall (\ref{eq:def H}), (\ref{eq:J}) and
the trivialising section $\psi\in\Gamma(L)$ from \eqref{eq:psi}. This
shows that $R$ is nowhere vanishing and parallel with respect to the
connection induced by $\nabla$ on $V/L$ and $\nabla^{L}$ on $L$.  Now
$\hat f$ passes through $\infty$ if and only if the prolongation
$\hat\varphi = \varphi+\hat B\varphi$ of the $\nabla^\mu$ parallel
section $\varphi\in\Gamma(V/L)$ giving rise to $\hat f$ has
$\hat\varphi(p)\in e\H$ and thus $\hat B(p)=0$ for some $p\in M$. But
$\delta$ and thus by (\ref{eq:A=delta}) 
also $*A$ have no zeros
by the assumption that $f$ is immersed. Therefore (\ref{eq: delta B =
  AC}) shows that $\hat C= J(\hat a-1) +\hat b$ has to vanish at $p$
which implies $\mu=1$. In this case $\hat\nabla^\mu = \nabla$, the
prolongation $\hat \varphi$ is constant and $\hat f$ is the point
$\infty$. In the considerations to follow, we always assume
$\mu\not=1$, and in particular, both $\hat B$ and $\hat C$ are nowhere
vanishing.

\begin{lemma}
\label{lem: mu gives DT}
Let $f: M \to\R^3$ be a conformal immersion of constant mean curvature
with $*\delta = J\delta = \delta\widetilde J$ and $\hat f: M \to S^4$ a
non--constant $\mu$--Darboux transform of $f$. Then the derivative of
$\hat f$, expressed in the splitting in $V = \hat L \oplus \Li$, is
given by
\begin{equation}
\label{eq:hatdelta=hata}
\hat \delta \hat R= *\hat  A\,.
\end{equation}
Here $\hat R = \hat C \invers + R$ and $-2\hat A = \frac 12(\nabla\hat
J - \hat J *\nabla\hat J)$ is the $(1,0)$--part of the derivative of
the complex structure
\[
\hat J = -\hat C\invers J \hat C\,.
\]
Moreover, $\hat J$ is harmonic and $\hat f$ is conformal with $*\hat
\delta =\hat J \hat \delta =\hat \delta \widetilde{\hat J}$ where
$
 \widetilde{\hat J} = -\hat R \hat J
\hat R\invers\,.
$
\end{lemma}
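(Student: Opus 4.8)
The plan is to show that $\hat f$ is, up to conjugation by $\hat C$, a faithful copy of $f$: its derivative is governed by $\hat A$ exactly as that of $f$ is governed by $*A=\delta R$, after which harmonicity of $\hat J$ is read off from the flatness of $\hat\nabla^\mu$.

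First I set up the prolongation map. Combining $\delta\hat B=*A\hat C$ from \eqref{eq: delta B = AC} with $*A=\delta R$ from \eqref{eq:A=delta} and the pointwise invertibility of $\delta\colon L\to V/L$ (valid since $f$ is an immersion) gives $\hat B=R\hat C$. Because $\hat C,\hat C^{-1},R$ are all quaternionic linear, so is $\hat R=\hat C^{-1}+R$, viewed as a map $V/L\to V$ by using $V/L\cong\Li\subset V$ for the term $\hat C^{-1}$ and $R\colon V/L\to L\subset V$ for the term $R$; one then checks directly that $\hat R(\hat C\varphi)=\varphi+\hat B\varphi=\hat\varphi$, so $\hat R$ carries $V/L$ isomorphically onto $\hat L=\hat\varphi\H$ (injectivity uses $L\cap\Li=0$). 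As $\mu\ne 1$ gives $V=\hat L\oplus\Li$, the projection $\hat\pi\colon V\to V/\hat L$ identifies $V/\hat L\cong\Li\cong V/L$, so $\hat\delta=\hat\pi\,d|_{\hat L}$ descends to an $\End(V/L)$-valued one-form $\hat\delta\hat R$; since an $\H$-linear endomorphism of the line bundle $V/L$ is determined by its value on the nowhere-zero section $\hat C\varphi$, it suffices to evaluate on $\hat C\varphi$, i.e. to compute $\hat\delta(\hat\varphi)$.

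Next I compute the derivative. Differentiating $\hat\varphi=\varphi+\hat B\varphi$, writing $\hat B\varphi=\psi\beta$ and using $de=0$ and $d\psi\in\Omega^1(\Li)$, the prolongation condition \eqref{eq:prolongation} forces the $\Li$-component of $d\hat\varphi$ to vanish, leaving $d\hat\varphi=\psi\,d\beta\in\Omega^1(L)$. Projecting $\psi\,d\beta$ to $V/\hat L\cong\Li\cong V/L$ and substituting $*A=\delta R$, $Re=\tfrac12\psi$, $\hat C=J(\hat a-1)+\hat b$ and $\hat I\varphi=\varphi i$, I expect the $J$ sitting inside $*A$ to re-emerge conjugated by $\hat C$, yielding $\hat\delta\hat R=*\hat A$ with $\hat A$ the $(1,0)$-part of $\nabla\hat J$ for $\hat J=-\hat C^{-1}J\hat C$. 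Conformality is then formal: as for $f$, $\hat J^2=-1$ forces the type relations $*\hat A=\hat J\hat A=-\hat A\hat J$, whence $*(\hat\delta\hat R)=*(*\hat A)=-\hat A=\hat J(*\hat A)=\hat J\,\hat\delta\hat R$, and cancelling the isomorphism $\hat R$ gives $*\hat\delta=\hat J\hat\delta$; likewise $\hat J(*\hat A)=-(*\hat A)\hat J$ rewrites as $*\hat\delta=\hat\delta\widetilde{\hat J}$ with $\widetilde{\hat J}=-\hat R\hat J\hat R^{-1}$, which is a complex structure on $\hat L$ precisely because $\hat R$ is an isomorphism.

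Finally, for harmonicity I would show $d^\nabla*\hat A=0$. The efficient route is to recognise the flat connection $\hat\nabla^\mu=\nabla+*A\hat C$ of \eqref{eq:quater conn} as a member of the associated family of $\hat J$: after re-expressing $*A\hat C$ through $*\hat A$, $\hat J$ and $\hat I$, it should take the shape $\nabla+*\hat A\big(\hat J(\hat a_\nu-1)+\hat b_\nu\big)$ of Lemma~\ref{lem:family_unit_circle} applied to $(\hat J,\nabla,\hat A,\hat I)$ at some parameter $\nu\ne 1$, and its flatness then forces the single condition $d^\nabla*\hat A=0$, equivalently harmonicity of $\hat J$. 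I expect the main obstacle to be exactly this algebra, which is shared with the identification in the previous step: because $\hat A$ is built from $\nabla\hat J$ with $\hat J=-\hat C^{-1}J\hat C$ and $\hat C$ is not $\nabla$-parallel, the passage from $*A$ to $*\hat A$ is a conjugation by $\hat C$ corrected by the derivative terms $\nabla\hat C$ and $\nabla\hat I$, the latter being controlled by $\hat\nabla^\mu\hat I=0$ from \eqref{eq:hat I parallel}, i.e. $\nabla\hat I=-[*A\hat C,\hat I]$. Verifying that these correction terms conspire to give precisely $\hat\delta\hat R=*\hat A$ and $*A\hat C=*\hat A\big(\hat J(\hat a_\nu-1)+\hat b_\nu\big)$ is the computational heart of the lemma.
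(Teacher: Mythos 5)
Your first two steps are sound and essentially reproduce the paper's argument: $\hat B = R\hat C$ (obtained from \eqref{eq: delta B = AC} and \eqref{eq:A=delta}, since composition with the nowhere-vanishing $\delta$ is injective), the observation that $\hat R\hat C\varphi = \varphi + \hat B\varphi = \hat\varphi$ so that $\hat R\colon V/L\to\hat L$ is an isomorphism, and the purely formal derivation of $*\hat\delta = \hat J\hat\delta = \hat\delta\widetilde{\hat J}$ from the type relations $*\hat A = \hat J\hat A = -\hat A\hat J$ all match the paper. However, you have deferred exactly the step that constitutes the proof of \eqref{eq:hatdelta=hata}. The paper gets it from the Riccati-type equation \eqref{eq:Riccati for C}, $\nabla\hat C = 2*Q(\hat a -1) + \hat C *A\hat C$, which rests on the algebraic identity $-2(\hat a-1)+J\hat C(\hat a-1)-\hat C\hat b=0$ (a consequence of $\hat a^2+\hat b^2=1$) together with $\nabla\hat I = -[*A\hat C,\hat I]$ from \eqref{eq:hat I parallel}; this yields $\hat\delta = -2\hat C\invers *Q(\hat a-1)(1+\hat B)\invers$ and, separately, $*\hat A = -2\hat C\invers *Q(\hat a-1)\hat C\invers$, whence \eqref{eq:hatdelta=hata}. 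Note in particular that what ``re-emerges conjugated by $\hat C$'' is the $(0,1)$-part $Q$ of $\nabla J$, not the $J$ sitting inside $*A$ --- your stated expectation points in the wrong direction, although the plan of comparing $\hat\delta$ with $(\nabla\hat J)'$ is the correct one.

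The harmonicity step would fail as proposed. Your idea is reasonable in spirit --- flatness of a single member $\nabla + *\hat A\big(\hat J(\hat a_\nu-1)+\hat b_\nu\big)$ with $\nu\neq 1$ would indeed force $d^\nabla *\hat A = 0$, since the curvature is $(d^\nabla *\hat A)\big(\hat J(\hat a_\nu-1)+\hat b_\nu\big)$ with invertible second factor --- but $\hat\nabla^\mu = \nabla + *A\hat C$ simply does not have that shape. Any $1$-form $\omega = *\hat A X$ satisfies $\hat J\omega = *\omega$ (it is of type $(1,0)$ for left multiplication by $\hat J$), whereas $*A\hat C$ satisfies $J(*A\hat C) = *(*A\hat C)$. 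For $\mu\in S^1\setminus\{1\}$ the coefficients $\hat a,\hat b$ are real scalars, $\hat C = J(\hat a-1)+\hat b$ commutes with $J$, hence $\hat J = -\hat C\invers J\hat C = -J$ and the two type conditions are opposite; since $*A\hat C$ is nowhere vanishing for $\mu\neq 1$ (as $A$ has no zeros by \eqref{eq:A=delta} and $\hat C$ is invertible), the identity $*A\hat C = *\hat A\big(\hat J(\hat a_\nu-1)+\hat b_\nu\big)$ is impossible there --- concretely, on $S^1$ one computes $*\hat A = *Q$, and no expression of the form $*Q\,X$ can equal $*A\hat C$ --- and so your reduction cannot work in general. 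The paper's harmonicity argument is both simpler and available to you once the Riccati equation is in hand: \eqref{eq:hatA} and \eqref{eq:Riccati for C} give $*\hat A = *A + \nabla\hat C\invers$, which is $d^\nabla$-closed because $d^\nabla *A = 0$ and $\nabla$ is flat; no second family of flat connections is needed.
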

\begin{proof}
  Let $\varphi$ be a $\nabla^\mu$ parallel section, $\hat\varphi =
  \varphi + \hat B\varphi$ its prolongation, and $\hat L
  =\hat\varphi\H$ the corresponding $\mu$--Darboux transform. Using
 (\ref{eq: delta B = AC}) and the decomposition
\[
d = \begin{pmatrix} \nabla^L & 0 \\ \delta & \nabla
\end{pmatrix}
\]
of $d$ in the splitting $V = L \oplus\Li$ we get
\begin{equation*}
d\hat\varphi = \nabla\varphi +
\nabla^L(\hat B\varphi) + \delta(\hat B\varphi)= (\nabla \hat
B)\varphi - \hat B\delta \hat B\varphi\,.
\end{equation*}
For $\phi\in L$ we have
\[
\phi =  (\phi +\hat B\invers\phi) - \hat B\invers\phi \in \hat L \oplus \Li\,,
\]
which shows that $-\hat B\invers\in\Hom(L, \Li)$ is the projection of
$L$ onto $\Li$.  Calculating $\hat\delta\hat\varphi =\pi_{\hat L}
d\hat\varphi$ we obtain with $d\hat\varphi\in\Omega^1(L)$ that
\begin{equation}
\label{eq:delta with B}
\hat\delta =  (-\hat B\invers (\nabla\hat B) +\delta\hat B)(1+\hat B)\invers\,,
\end{equation}
where we also used $\hat\varphi = (1+\hat B)\varphi$. Recalling
(\ref{eq: delta B = AC}) and (\ref{eq:A=delta}) and the fact that $R$ is parallel, we get
\begin{equation}
\label{eq:hilf}
-\hat B\invers(\nabla \hat B) + \delta \hat B = -\hat C\invers\nabla
\hat C + *A\hat C\,.
\end{equation}
Furthermore, since $\hat\nabla^\mu\hat I =0$ we have $ \nabla\hat I =
-[*A\hat C, \hat I]$ and
\begin{eqnarray*}
\nabla \hat C &=&  2*Q(\hat a-1) + \hat C *A \hat C +  *A(-2(\hat a-1) + J\hat C(\hat a-1) -\hat C\hat b)\,.
\end{eqnarray*}
But since
 $\hat
a^2 + \hat b^2=1$ and $\hat C = J(\hat a-1) +\hat b$ we have $-2(\hat a-1) + J\hat C(\hat a-1) -\hat C\hat b =0$, or equivalently
\begin{equation}
\label{eq: initial}
\hat J = \frac{\hat b}{1-\hat a} - 2\hat C\invers\,.
\end{equation}
Thus we conclude that
\begin{equation}
\label{eq:Riccati for C}
\nabla \hat C = 2*Q(\hat a-1) + \hat C*A\hat C\,.
\end{equation}
This Riccati type equation together with (\ref{eq:delta with B}) and (\ref{eq:hilf}) yields
\begin{equation}
\label{eq:hatdelta}
\hat \delta = - 2\hat C\invers *Q(\hat a-1)(1+\hat B)\invers\,,
\end{equation}
and, since $*Q = - J *Q$, we also have $*\hat \delta = \hat J \hat
\delta$ with $\hat J = - \hat C\invers J \hat C$.\\

On the other hand, the derivative of $\hat J$
computes to
\[
\nabla \hat J =  \hat C\invers\left(2*Q\big(J(\hat a-1)-(\hat a-1)\hat J\big) - \big(\hat C J + J\hat C)*A\hat C - (\nabla J) \hat C\right)\,,
\]
so that its $(1,0)$--part with respect to $\hat J$ is given by
\begin{equation*}
(\nabla \hat J)' =- 2 \hat C\invers*Q\left((\hat a-1)\hat J + \hat b\right)\,.
\end{equation*}
Using (\ref{eq: initial})
we thus obtain
\begin{equation}
\label{eq:hatA}
*\hat A = -\frac 12(\nabla \hat J)' =   -2\hat C\invers*Q(\hat a-1)\hat C\invers\,.
\end{equation}
Comparison with (\ref{eq:hatdelta}) gives together with (\ref{eq: delta B =
  AC}) and (\ref{eq:A=delta})
\begin{equation*}
\hat \delta \hat R = *\hat  A
\end{equation*}
where $\hat R = \hat C \invers + R$, and from $*\hat A = \hat J \hat A =
-\hat A \hat J$ we also see $*\hat\delta = \hat J \hat \delta =
\hat \delta\widetilde{\hat J}$.

Finally, by (\ref{eq:hatA}) and the Riccati type equation
(\ref{eq:Riccati for C}) we see that
\[
*\hat A  = *A + \nabla \hat C\invers
\]
is $d^{\nabla}$-closed since $d^\nabla*A=0$ by the harmonicity of $J$.

\end{proof}

Notice that equation (\ref{eq:hatdelta}) shows that a
$\mu$--Darboux transform is constant if and only if $Q= 0$ or
$\hat a =1$. The first condition means that the Gauss map $N$ is
holomorphic and thus $f(M)$ is contained in a round sphere whilst the
second is equivalent to $\mu=1$.

\begin{cor}
  If $f(M)$ is not contained in a round sphere, then a $\mu$-Darboux
  transform $\hat f$ of $f$ is constant if and only if $\mu=1$. In
  this case, $\hat f= \infty$.
\end{cor}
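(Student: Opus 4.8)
The plan is to read the corollary directly off the derivative formula~(\ref{eq:hatdelta}) for a $\mu$--Darboux transform established in Lemma~\ref{lem: mu gives DT}. A conformal map is constant exactly when its derivative vanishes identically, so I would begin by observing that $\hat f$ is constant if and only if the one--form $\hat\delta$ is identically zero on $M$. Substituting
\begin{equation*}
\hat\delta = -2\hat C\invers *Q(\hat a-1)(1+\hat B)\invers
\end{equation*}
and using that, for $\mu\neq 1$, the bundle map $\hat C$ is nowhere vanishing while $(1+\hat B)\colon\Li\to\hat L$ is the bundle isomorphism $\varphi\mapsto\hat\varphi$, the vanishing of $\hat\delta$ reduces to the pointwise identity $*Q(\hat a-1)\equiv 0$.

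The second step is to dispose of the two remaining factors separately. For the factor $\hat a-1$: since $a-1=\tfrac{\mu+\mu\invers}{2}-1=\tfrac{(\mu-1)^2}{2\mu}$ vanishes precisely when $\mu=1$, and $\hat a$ is obtained from $a$ by replacing the constant complex structure $I$ with $\hat I$, I would write $\hat a-1 = s\,\Id + t\,\hat I$ with real scalars $s,t$ not both zero when $\mu\neq 1$; then $(s\,\Id+t\,\hat I)(s\,\Id-t\,\hat I)=(s^2+t^2)\Id$ exhibits $\hat a-1$ as invertible for every $\mu\neq 1$. For the factor $*Q$: recall $(\nabla J)''=2*Q$, so $Q\equiv 0$ is exactly the condition that $J$, equivalently the Gauss map $N$, be holomorphic, which holds if and only if $f(M)$ lies in a round $2$--sphere. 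The hypothesis of the corollary is precisely that this does not occur, hence $Q\not\equiv 0$.

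Combining these, for $\mu\neq 1$ the factor $\hat a-1$ is invertible, so $*Q(\hat a-1)\equiv 0$ would force $*Q\equiv 0$ and thus $Q\equiv 0$, contradicting the assumption that $f(M)$ is not contained in a round sphere; therefore $\hat\delta\not\equiv 0$ and $\hat f$ is non--constant. Conversely, when $\mu=1$ we have $\hat a=1$ and $\hat b=0$, whence $\hat C=0$ and $\hat B=0$, so by the paragraph preceding Lemma~\ref{lem: mu gives DT} the prolongation $\hat\varphi$ is constant and $\hat f\equiv e\H=\infty$; this both shows $\hat f$ is constant and identifies its value. The one point I would flag is the passage from $*Q(\hat a-1)\equiv 0$ to $*Q\equiv 0$: since $\hat a-1$ is a fixed invertible endomorphism while $\hat\delta$ must vanish at \emph{every} point, the cancellation is genuine and no isolated zero of $Q$ can rescue non--constancy. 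Everything else is direct substitution, so I do not anticipate a serious obstacle.
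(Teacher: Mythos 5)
Your proof is correct and takes essentially the same approach as the paper, which likewise reads the corollary directly off \eqref{eq:hatdelta} (constancy forces $Q=0$ or $\hat a=1$) and handles $\mu=1$ via the discussion preceding Lemma~\ref{lem: mu gives DT}, where $\hat C=\hat B=0$ makes the prolongation constant and $\hat f=\infty$. Your explicit check that $\hat a-1=s\,\Id+t\,\hat I$ is pointwise invertible for $\mu\neq 1$ simply fills in a step the paper leaves implicit.
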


Using Lemma \ref{lem: mu
  gives DT} we arrive at our first main result that every
 $\mu$-Darboux transform of a constant mean curvature surface has
 constant mean curvature.

\begin{theorem} \label{thm:DT is CMC} Let
  $f\colon M \to\R^3$ be a surface of constant mean curvature $H=1$.
  Then every $\mu$-Darboux transform $\hat f$ of $f$ has constant real
  part and when $\hat{f}$ is not a point, $\Im(\hat f)$ is a constant
  mean curvature surface in $\R^3$ with $\hat H=1$. In particular, for
  each $\mu=e^{i\theta}\in S^1\setminus\{1\}$ there is a unique
  $\mu$-Darboux transform of $f$, given by $\hat f = g +
  \cot\frac{\theta}{2}$, where $g = f+N$ is the parallel constant mean
  curvature surface of $f$.
 \end{theorem}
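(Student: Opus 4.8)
My plan is to run the whole argument as a formal analogy with the way $f$ itself is encoded: for $f$ one has $*A=\delta R$ with $Re=\tfrac12\psi$, $Je=eN$, and $J$ harmonic, and these data say precisely that $f$ is a constant mean curvature surface with $H=1$ in $\R^3=\Im\H$ for $\infty=e\H$. Lemma \ref{lem: mu gives DT} already supplies the hatted counterparts $*\hat A=\hat\delta\hat R$, $*\hat A=-\tfrac12(\nabla\hat J)'$, and $\hat J=-\hat C\invers J\hat C$ harmonic, so it suffices to produce the two missing relations $\hat Re=\tfrac12\hat\psi$ and $\hat Je=e\hat N$ and then read off the conclusion. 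First I would make the prolongation explicit. From $\delta\hat B=*A\hat C=\delta R\hat C$ and the fact that $\delta$ is a bundle isomorphism one obtains $\hat B=R\hat C$, and since $R(eq)=\tfrac12\psi q$ the prolongation of a $\nabla^\mu$-parallel section $\varphi=e\varphi_0$ is $\hat\varphi=\varphi+R\hat C\varphi$. As $\hat C\in\End(V/L)$ preserves $\Li$ we may write $\hat C\varphi=e\sigma$, so $\hat\varphi=e\varphi_0+\tfrac12\psi\sigma$ and, in the affine chart $\infty=e\H$, $\hat f=f+T$ with $T=2\varphi_0\sigma\invers$; in particular $\Re\hat f=\Re T$.

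The crux is the identity $\hat Re=\tfrac12\hat\psi$ with $\hat\psi=\begin{pmatrix}\hat f\\1\end{pmatrix}$, the exact analogue of $Re=\tfrac12\psi$. It follows from $\hat R\hat C\varphi=(\hat C\invers+R)\hat C\varphi=\varphi+\hat B\varphi=\hat\varphi$: writing $\hat C\varphi=e\sigma$ and using $\H$-linearity gives $\hat C\invers e=e\varphi_0\sigma\invers=\tfrac12 eT$, whence $\hat Re=\hat C\invers e+Re=\tfrac12(eT+\psi)=\tfrac12\hat\psi$. Constant real part is then immediate. Since $\hat J\in\End(\Li)$ preserves $\Li=e\H$, the left normal $\hat N$ of $\hat f$ is defined by $\hat Je=e\hat N$, and from $\hat Re=\tfrac12\hat\psi$ we have $\hat R\invers\hat\psi=2e$, so the right normal read off from $\widetilde{\hat J}=-\hat R\hat J\hat R\invers$ is $\widetilde{\hat J}\hat\psi=-\hat R\hat J(2e)=-2\hat R(e\hat N)=-\hat\psi\hat N$. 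The two normals of $\hat f$ thus coincide, hence $\hat f$ lies in an affine copy of $\Im\H$, $\Re\hat f$ is constant, and $\hat N$ is the genuine Gauss map of $\Im\hat f$.

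For the mean curvature I would transport the computation that $Re=\tfrac12\psi$ forces $H=1$. On one side, the derivative of $\hat f$ in the chart is $\hat\delta\hat\psi=e\,d\hat f$, so $*\hat A=\hat\delta\hat R$ gives $*\hat Ae=\hat\delta\hat Re=\tfrac12\hat\delta\hat\psi=\tfrac12 e\,d\hat f$. On the other, $*\hat A=-\tfrac12(\nabla\hat J)'$ together with $(\nabla\hat J)e=e\,d\hat N$ (using $\nabla e=0$) gives $*\hat Ae=-\tfrac12 e(d\hat N)'$. Comparing yields $(d\hat N)'=-d\hat f=-d(\Im\hat f)$, i.e. $\hat H=1$, exactly as for $f$; and the harmonicity of $\hat J$ from Lemma \ref{lem: mu gives DT} is, via $(\nabla\hat J)e=e\,d\hat N$, the harmonicity of $\hat N$. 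By the Ruh--Vilms characterisation \cite{RV:70}, $\Im\hat f$ is therefore a constant mean curvature surface with $\hat H=1$.

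Finally, for $\mu=e^{i\theta}\in S^1\setminus\{1\}$ the connection $\nabla^\mu$ is quaternionic by Remark \ref{rem:lambda}, so on the simply connected $M$ its parallel sections form a single quaternionic line and the $\mu$-Darboux transform is unique. Here $\hat a,\hat b$ act on $\varphi$ by the real scalars $\cos\theta,\sin\theta$, so $\hat C\varphi=e\big(N\varphi_0(\cos\theta-1)+\varphi_0\sin\theta\big)$; substituting $\sigma=N\varphi_0(\cos\theta-1)+\varphi_0\sin\theta$ into $T=2\varphi_0\sigma\invers$ and using that conjugation by $\varphi_0$ returns $\varphi_0\invers N\varphi_0$ to $N$ collapses $T$ to $\cot\tfrac\theta2+N$ independently of $\varphi_0$, whence $\hat f=f+N+\cot\tfrac\theta2=g+\cot\tfrac\theta2$. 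I expect the main obstacle to be the careful bookkeeping of the identifications $V/L\cong\Li\cong V/\hat L$ underlying $\hat R=\hat C\invers+R$ and the verification of $\hat Re=\tfrac12\hat\psi$; once this single identity is secured, both the constant real part and the normalisation $\hat H=1$ fall out of the formal analogy with $f$.
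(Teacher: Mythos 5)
Your proposal is correct and takes essentially the same route as the paper's own proof: your key identity $\hat R e=\tfrac12\hat\psi$ is exactly the paper's \eqref{eq:hatR}, your computation of the two normals via $\hat J e=e\hat N$ and $\widetilde{\hat J}\hat\psi=-\hat\psi\hat N$ reproduces \eqref{eq:hat N} and the coordinate equation $*d\hat f=\hat N d\hat f=-d\hat f\hat N$, your comparison $e\,d\hat f=-e(d\hat N)'$ is the paper's normalisation argument for $\hat H=1$, and your evaluation of $\hat C$ on $e$ is \eqref{eq:Tinvers}. The only (cosmetic) deviations are that you make $\hat B=R\hat C$ explicit, cite Ruh--Vilms where the paper just invokes harmonicity of $\hat J$ from Lemma \ref{lem: mu gives DT}, and argue uniqueness for $\mu\in S^1$ via quaternionicity of $\nabla^\mu$ rather than via the $\varphi$-independence of the explicit formula --- all of which are consistent with the paper's argument.
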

   \begin{proof}
    Let $\varphi$ be a $\nabla^\mu$--parallel section and $\hat f$ the
    associated $\mu$--Darboux transform for $\mu\not=1$.  Writing
    $\hat C\invers e = \frac 12 e\hat T$ with $\hat T: M \to\H_*$ we have
\begin{equation}
\label{eq:hatR}
\hat R e  = (R + \hat C\invers) e
\ =\frac 12\begin{pmatrix} f + \hat T\\ 1
\end{pmatrix}
\in\Gamma(\hat L)
\end{equation}
so that $\hat f = f +\hat T$.  Moreover, the complex structures $\hat
J=-\hat C\invers J \hat C$ and $\widetilde{\hat J}=-\hat R \hat J \hat R
\invers$ satisfy
\begin{equation}
\label{eq:hat N}
\hat J e = e \hat N \quad \text{ and } \quad \widetilde{\hat J}
  \hat\psi = -\hat\psi \hat N
\end{equation}
with $\hat N = -\hat T N \hat T\invers$ and $\hat \psi = 2\hat R e$.  In
particular, $*\hat\delta\hat\psi = \hat J \hat\delta \hat \psi = \hat \delta \widetilde{\hat J}\hat \psi$  reads in coordinates as
\[
*d\hat f = \hat N d\hat f  = - d\hat f \hat N
\]
so that $d\hat f$ takes values in $\R^3$, and $\hat f$ has constant
real part. Therefore, $\hat N$ is the Gauss map of $\Im(\hat f)$. From
Lemma \ref{lem: mu gives DT} we know that $\hat J$ is harmonic and
thus $\Im(\hat f)$ has constant mean curvature.  In fact,
(\ref{eq:hatdelta=hata}) shows that the derivative of $\hat f$, and
thus of $\Im(\hat f)$, is $e\,d\hat f = -e(d\hat N)'$ so that the mean
curvature of $\Im(\hat f)$ is $\hat{H} =1$.  Writing
$\varphi=e\alpha$, then evaluating $\hat{C}$ on $e$ gives
\begin{equation}
\label{eq:Tinvers}
e \hat T\invers = \frac 12 \hat C e = \frac 12 e(N\alpha(a-1)\alpha\invers + \alpha b\alpha\invers)
\end{equation}
where $\mu$, $a =\frac{\mu+\mu\invers}2$ and $b =  \frac{\mu
  - \mu\invers}{2i}$ are viewed as complex numbers.  This shows for $\mu\in
S^1$, that is $a, b\in\R$, that the $\mu$-Darboux transform is given
by
\[
\hat{f} = f + N +\frac{b}{1-a}=g+\cot\tfrac{\theta}{2}
\]
and hence is a translate of the parallel constant mean curvature surface $g=f+N$ and is independent of the parallel section $\varphi$.
\end{proof}
\begin{rem}
\label{rem:realpart0}
 Observe that \eqref{eq:Tinvers} also shows that a $\mu$--Darboux
 transform $\hat f$ of $f$ has vanishing real part for
  $\mu\in\R_*$. Furthermore, for $\mu\in \C_*\setminus (S^1\cup \R_*)$
  the real part of a $\mu$--Darboux transform is $ \Re(\hat
  f) = \frac{\Im a}{\Im\left((a-1){\bar b}^{-1}\right)}$ where
  $a=\frac{\mu+\mu\invers}2$ and $b= \frac{\mu\invers-\mu}{2}I$.
\end{rem}

\begin{rem}
  There are versions of Lemma \ref{lem: mu gives DT} for other classes
  of immersions given by a harmonicity condition. Analogues of Theorem
  \ref{thm:DT is CMC} hold for Hamiltonian stationary Lagrangian
  surfaces \cite{hsl}, and for (constrained) Willmore surfaces in the
  4--sphere \cite{Bohle:08}.
\end{rem}

We conclude this section by determining which $\mu$--Darboux
transforms $\hat{f}$ of a constant mean curvature immersion $f$ are
classical.  Since a $\mu$--Darboux transform $\hat{f}$ is a Darboux
transform there is a sphere congruence $S$ enveloping $f$ and
left--enveloping $\hat{f}$, hence satisfying \eqref{eq:enveloping} and
\eqref{eq:left--touch}. From arguments analogous to those in the proof
of Lemma~\ref{lem:dt with derivatives} (see also
\cite{conformal_tori}) it follows that these enveloping conditions are
equivalently described by $\delta\wedge \hat{\delta}=0$. Therefore, in
order to see which $\mu$--Darboux transforms are classical, we need by
Lemma \ref{lem:dt with derivatives} to investigate the condition
$\hat\delta\wedge\delta=0$ in the splitting $V= L \oplus\hat L$.

\begin{theorem}
\label{thm:DT is not CDT}
A non--constant $\mu$--Darboux transform  $\hat{f}\colon M\to \R^4$
of a constant mean curvature
surface $f:M \to\R^3$ is a classical Darboux transform of $f$ if and
only if $\mu\in\R_*\cup S^1\setminus\{1\}$.

\end{theorem}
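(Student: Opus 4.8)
The plan is to characterize the classical Darboux condition $\hat\delta \wedge \delta = 0$ (in the splitting $V = L \oplus \hat L$) using the formulas already derived in Lemma~\ref{lem: mu gives DT}. The excerpt tells us that $\hat f$ being a classical Darboux transform is equivalent to \emph{both} $\delta \wedge \hat\delta = 0$ (which holds automatically, since $\hat f$ is already a $\mu$-Darboux transform and hence half-enveloped) and $\hat\delta \wedge \delta = 0$. So the entire content is to determine for which $\mu$ the second wedge condition holds, and show this is exactly $\mu \in \R_* \cup S^1 \setminus \{1\}$.

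First I would assemble the explicit form of $\hat\delta$ from \eqref{eq:hatdelta}, namely $\hat\delta = -2\hat C\invers *Q(\hat a - 1)(1+\hat B)\invers$, and express $\delta$ in the same splitting. The natural move is to compute $\hat\delta \wedge \delta$ as a $2$-form-valued endomorphism and reduce it, using type considerations, to a condition purely on the algebraic data $\hat a, \hat b, \hat C, J$ evaluated on the relevant complex structures. Since $*Q = QJ = -JQ$ and $*A = JA = -AJ$, the wedge $\hat\delta \wedge \delta$ will pick out a mixed-type term that vanishes precisely when a certain scalar (or endomorphism) built from $a$, $b$ vanishes. Evaluating everything on the trivializing section $e$ and writing $\varphi = e\alpha$ as in the proof of Theorem~\ref{thm:DT is CMC}, I would reduce the wedge condition to an equation in the complex numbers $a = \frac{\mu+\mu\invers}{2}$ and $b = \frac{\mu-\mu\invers}{2i}$, conjugated by $\alpha$.

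The cleanest route is probably to use Remark~\ref{rem:realpart0}: we already know $\Re(\hat f) = 0$ exactly when $\mu \in \R_*$, and $\hat f = g + \cot\frac{\theta}{2}$ (a translate of the parallel surface, lying in $\R^3$) when $\mu \in S^1$. In both of these cases $\hat f$ genuinely maps into a copy of $\R^3$, where $f$ and its translate/parallel are honestly isothermic surfaces in $\R^3$ forming a classical Darboux pair. For the converse, the content is that when $\mu \in \C_* \setminus (S^1 \cup \R_*)$ the real part $\Re(\hat f) = \frac{\Im a}{\Im((a-1)\bar b^{-1})}$ is a nonzero constant, so $\hat f$ and $f$ do not lie in a common $\R^3$; I would show the wedge $\hat\delta \wedge \delta$ then acquires a nonvanishing component. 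Concretely, the condition $\hat\delta \wedge \delta = 0$ should reduce to the requirement that $\hat b(1-\hat a)\invers$ (the quantity appearing in \eqref{eq: initial}) commutes appropriately with $J$, or equivalently that $b(1-a)\invers$ be real, which happens exactly on $\R_* \cup S^1$.

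The main obstacle I anticipate is the bookkeeping of quaternionic versus complex structures: the quantities $\hat a, \hat b, \hat I$ are the $\varphi$-dependent quaternionic-linear versions of $a, b, I$, so the wedge condition must be pushed back to the genuinely complex scalars $a, b$ via conjugation by $\alpha$, and one must track carefully how $J$ interacts with $\hat C = J(\hat a -1) + \hat b$. The delicate point is ensuring that the vanishing of $\hat\delta \wedge \delta$ is equivalent to a \emph{reality} condition on $b(1-a)\invers$ rather than some weaker or stronger algebraic identity, and that the excluded value $\mu = 1$ (already barred, since $\hat f$ is then the point $\infty$) does not reappear as a spurious solution. Once the wedge condition is shown to be $\Im(b(1-a)\invers) = 0$, a direct computation with $a = \frac{\mu+\mu\invers}{2}$, $b = \frac{\mu-\mu\invers}{2i}$ identifies the solution set as $\mu \in \R_* \cup S^1$, completing the characterization.
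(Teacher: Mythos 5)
Your frame is the paper's: since $\hat f$ is a Darboux transform, $\delta\wedge\hat\delta=0$ holds automatically, and classicality is exactly the extra condition $\hat\delta\wedge\delta=0$ in the splitting $V=L\oplus\hat L$, computed from \eqref{eq:hatdelta}. But your proposed algebraic endpoint is wrong, and this is a genuine gap rather than bookkeeping. Carrying the computation out as the paper does (the derivatives in this splitting are $\delta R=(1+\hat B)*A$ and $\hat\delta(1+\hat B)=2R*Q(\hat a-1)$), the wedge condition reduces via $*A=JA$ and $*Q=QJ$ to $Q[J,\hat a-1]=0$, that is to
\[
(\Im\hat a)\,[J,\hat I]=0\,,
\]
so the correct scalar criterion is $\Im\hat a=0$, i.e.\ $a=\frac{\mu+\mu\invers}{2}\in\R$, which holds precisely for $\mu\in\R_*\cup S^1$. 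Your criterion --- reality of $b(1-a)\invers$ --- is a different condition: with $a=\frac{\mu+\mu\invers}{2}$, $b=\frac{\mu-\mu\invers}{2i}$ one computes $\frac{b}{1-a}=-i\,\frac{1+\mu}{1-\mu}$, which is real exactly for $\mu\in S^1$ and is purely \emph{imaginary} for $\mu\in\R_*$ (consistent with Remark~\ref{rem:realpart0} and with $\frac{b}{1-a}=\cot\frac{\theta}{2}$ on the circle). So your test would exclude every real spectral parameter and contradict the theorem itself, as well as the corollary identifying the $\mu$--Darboux transforms with $\mu\in\R\setminus\{0,1\}$ as the classical CMC Darboux transforms. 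You also omit a step the argument cannot do without: having reached $(\Im\hat a)[J,\hat I]=0$, one must rule out $[J,\hat I]=0$. Commuting complex structures would force $\hat I=\pm J$; then \eqref{eq:hat I parallel} and \eqref{eq:quater conn} give $0=\hat\nabla^\mu J=2(*Q-*A)+[*A\hat C,J]$, whose $(0,1)$--part yields $*Q=0$, contradicting the non-constancy of $\hat f$. Without this step, $\Im\hat a\neq 0$ is not excluded.

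The shortcut through Remark~\ref{rem:realpart0} fails in both directions. Forward: two isothermic surfaces in (a copy of) $\R^3$ do not automatically form a classical Darboux pair --- isothermicity is necessary, not sufficient --- so the enveloping condition \eqref{eq:DT} must actually be verified. For $\mu\in\R_*$ one can in fact upgrade the automatic half--condition $\delta\wedge\hat\delta=0$ to $\hat\delta\wedge\delta=0$ by quaternionic conjugation, using that $df$, $d\hat f$ and $T$ are all $\Im\H$--valued, but you would need to make that argument; and for $\mu\in S^1$ it is unavailable, because $\hat f=g+\cot\frac{\theta}{2}$ lies in a parallel translated $\R^3$, so $T$ has nonzero real part and the pair lies in no common $\R^3$. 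Converse: your heuristic that a nonzero constant real part forces $\hat\delta\wedge\delta\neq 0$ is refuted by the unitary case itself, where $\Re\hat f=\cot\frac{\theta}{2}\neq 0$ and yet $\hat f$ is classical. The obstruction is not the position of $\hat f$ in $\R^4$ but precisely the reality of $\hat a$.
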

Note that by Remark~\ref{rem:realpart0} the $\mu$-Darboux transform $\hat{f}$ takes values in a parallel
translated $\R^3\subset \R^4$ of distance $\cot\tfrac{\theta}{2}$ where $\mu=e^{i\theta}$.
In particular, $\hat{f}$ takes values in the same $\R^3$ as $f$ if and only if $\mu\in\R\setminus\{0,1\}$.
\begin{proof}
Let $\varphi\in\Gamma({V/L})$ be a parallel section of $\nabla^\mu$
and $\hat f$ the Darboux transform given by $\varphi$. From the
definition of the prolongation map $\hat{B}$ we see that the
decomposition of $\phi\in\Li$ with respect to the splitting $V = L
\oplus\hat L$ is
\begin{equation*}
\phi = (-\hat B \phi) +(1+\hat B)\phi \in L \oplus\hat L\,.
\end{equation*}
Therefore, using equations (\ref{eq:A=delta}) and (\ref{eq:hatdelta})
the derivatives of $f$ and $\hat f$ with respect to this splitting are
given by
\begin{equation*}
\delta R = (1+\hat B)*A \quad \text{ and } \quad \hat \delta (1+\hat B) =2 R*Q(\hat a-1)\,.
\end{equation*}
Since $\hat f$ is not constant $\hat\delta$ has only isolated zeros
and so does $Q$. Then
\[
2 R*Q(\hat a-1) \wedge *A=\hat \delta\wedge \delta R=0
\]
if and only if $*Q(\hat a-1) = Q(\hat a-1)J$ where we used
$*A=JA$. Since $Q$ has isolated zeros this last is equivalent to $[J,
\hat a-1]=0$, that is
\begin{equation}
\label{eq:cdt condition}
(\Im \hat a) [J,  \hat I]=0\,.
\end{equation}
However the two complex structures $J$ and $\hat I$ commute only if $\hat
I = \pm J$. By \eqref{eq:hat I parallel} and \eqref{eq:quater conn}
this implies
\[
0=\hat\nabla^\mu J = 2(*Q-*A) + [*A\hat C, J]\,.
\]
Considering the $(0,1)$--part, this yields
\[
*Q  =0\,,
\]
contradicting the assumption that $\hat f$ is not constant.
Therefore, \eqref{eq:cdt condition} is equivalent to $\Im\hat a =0$,
which shows that a non--constant $\mu$--Darboux transform is a
classical Darboux transform
if and only if $\mu\in\R_*\cup S^1\setminus\{1\}$.
\end{proof}

Since the real part of a $\mu$--Darboux transform $\hat f$
  of a constant mean curvature surface $f\colon M \to\R^3$ is constant only
  its imaginary part $\Im(\hat f)$ is geometrically
  relevant.

  \begin{theorem}
    Let $f\colon M\to\R^3$ be a constant mean curvature surface and
    $\hat{f}$ a $\mu$-Darboux transform. Then $\hat{f}$ is a classical
    Darboux transform if and only if $\Im(\hat{f})$ is.
  \end{theorem}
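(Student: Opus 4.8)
The plan is to reduce both sides to the wedge criterion of Lemma~\ref{lem:dt with derivatives} and to compare them in the affine chart $\infty=e\H$. By Theorem~\ref{thm:DT is CMC} the real part $\Re(\hat f)=:c$ is constant, so $\hat f=f^\flat+c$ with $f^\flat:=\Im(\hat f)\colon M\to\R^3=\Im\H$; we may assume $\hat f$ is non-constant. By Remark~\ref{rem:realpart0} we have $c=0$ exactly when $\mu\in\R_*$, and then $\hat f=f^\flat$ so there is nothing to prove. Hence assume $c\neq0$. Both $(f,\hat f)$ and $(f,f^\flat)$ are pairs of surfaces with $f(p)\neq\hat f(p)$, so by Lemma~\ref{lem:dt with derivatives} each is a classical Darboux pair precisely when the two wedge conditions \eqref{eq:DT}, written out as \eqref{eq:isothermic}, vanish.

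Next I would write everything in the chart via \eqref{eq:delta in coordinates}, using the translation vectors $\hat T=\hat f-f$ for the pair $(f,\hat f)$ and $T_g:=f^\flat-f=\hat T-c$ for the pair $(f,f^\flat)$; note that $T_g$ is purely imaginary since $f,f^\flat\in\Im\H$, while $c$ is a real, hence central, constant. Picking a conformal coordinate one has $df=p\,dx-Np\,dy$ and $d\hat f=r\,dx-\hat N r\,dy$ with $p,r\in\Im\H$, where the common Gauss map of $\hat f$ and $f^\flat$ obeys $\hat N=-\hat T N\hat T\invers$ by \eqref{eq:hat N}. Substituting into \eqref{eq:isothermic} collapses each $2$-form to a pointwise quaternionic identity. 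Two reductions appear: the $(1,0)$-type of $df$ forces $\{N,p\}=0$, which makes $\delta\wedge\hat\delta=0$ automatic (as it must be, $\hat f$ being a Darboux transform), so that $(f,\hat f)$ is classical iff the single identity $(A)$: $\{N,\hat T\invers r\hat T\invers\}=0$ holds; for $(f,f^\flat)$ neither wedge is automatic, and one obtains two identities $(A')$ and $(B')$, where $(B')$ simplifies, using $c\neq0$, to $[N,T_g]=0$, i.e. $T_g\parallel N$.

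Finally I would establish the equivalence. Assuming $(B')$, the centrality of $c$ gives $[N,\hat T]=0$, so conjugation by $\hat T$ and by $T_g$ both commute with $N$, and $(A)$ and $(A')$ collapse to the same anticommutation $\{N,r\}=0$; hence $(A')$ and $(B')$ imply $(A)$, which proves that $f^\flat$ classical $\Rightarrow\hat f$ classical (contrapositively disposing of all $\mu\notin\R_*\cup S^1$ in one stroke). For the converse I would invoke Theorem~\ref{thm:DT is not CDT}: if $\hat f$ is classical and $c\neq0$ then $\mu\in S^1$, whereupon Theorem~\ref{thm:DT is CMC} gives $f^\flat=f+N$, so $T_g=N$ satisfies $(B')$ and $(A)\Rightarrow(A')$ closes the loop. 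The main obstacle is precisely this converse: the extra condition $(B')$, $T_g\parallel N$, is not forced by the algebra of $(A)$ alone---reflecting the fact that $f^\flat$, unlike $\hat f$, is not a priori a Darboux transform of $f$---so the global classification of Theorem~\ref{thm:DT is not CDT} must be fed in to guarantee $T_g\parallel N$.
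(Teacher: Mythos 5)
Your proposal is correct, and its hard direction ($\Im(\hat f)$ classical $\Rightarrow$ $\hat f$ classical) takes a genuinely more direct route than the paper's. Both arguments in fact begin identically: one of the two wedge conditions \eqref{eq:isothermic} for the pair $(f,\Im(\hat f))$, combined with $\hat N=-\hat T N\hat T\invers$ from \eqref{eq:hat N} and the centrality of the nonzero real constant $c=\Re(\hat T)$, forces $[N,T_g]=0$, i.e.\ $\Im(\hat T)=rN$ --- your $(B')$ is exactly the paper's intermediate step. From there the routes diverge. The paper identifies $\Im(\hat f)$ as the parallel surface $g=f+N$ (Gauss map $-N$ plus Theorem~\ref{thm:DT is CMC}), extracts $\Re(\hat T)=\frac{\hat b}{1-\hat a}$ from \eqref{eq: initial} and \eqref{eq:Tinvers}, deduces $\hat a\in\R$, hence $\mu\in S^1$, and only then concludes via Theorem~\ref{thm:DT is not CDT}. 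You instead observe that once $N$ commutes with both $T_g$ and $\hat T=T_g+c$, conjugation by either translation vector is transparent to $N$, so the remaining classicality conditions $(A)$ for $(f,\hat f)$ and $(A')$ for $(f,\Im(\hat f))$ reduce to the single pointwise identity $\{N,r\}=0$; thus $(A')$ and $(B')$ give $(A)$ with no need to determine $\mu$ or to invoke the spectral classification in this direction. What the paper's longer route buys is the extra information $\mu\in S^1$ and the explicit identification of $\Im(\hat f)$ with $g$; what yours buys is a purely local, quaternionic-algebra argument via Lemma~\ref{lem:dt with derivatives}. In the easy direction you use the same inputs as the paper (Theorems~\ref{thm:DT is not CDT} and \ref{thm:DT is CMC}, Remark~\ref{rem:realpart0}); your detour through $(A)\Rightarrow(A')$ is slightly redundant since $T_g=N$ makes $(f,g)$ a classical pair directly, but it is valid. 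Your closing remark also correctly diagnoses why that direction cannot be made purely algebraic: $(A)$ alone does not force $(B')$. The only points to polish in a full write-up are routine: the division by $r=\hat f_x$ at the isolated non-immersed points of $\hat f$ should be handled by continuity, and the degenerate cases ($\mu=1$, or $f(M)$ contained in a round sphere, where $\hat f$ is constant) excluded at the outset, exactly as the paper does implicitly.
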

  \begin{proof}
    If $\hat{f}$ is a classical Darboux transform then by Theorem
    \ref{thm:DT is CMC} and Remark \ref{rem:realpart0} either
    $\hat{f}$ has vanishing real part or it is the parallel constant mean
    curvature surface of $f$ up to a real translation. Therefore
    $\Im\hat{f}$ is also a classical Darboux transform. On the other
    hand, let us assume that $\Im(\hat{f})$ is a classical Darboux
    transform. Then it follows from (\ref{eq:isothermic}),
    (\ref{eq:hat N}) and the fact that $\hat{f}$ and $\Im(\hat{f})$
    differ by a real constant that
  \[
  \hat{N}=-\hat TN\hat T^{-1}=-\Im(\hat T) N \,\Im(\hat T)^{-1}\,,
  \]
  where $\hat T=\hat{f}-f$. Assuming $\Re(\hat T)=\Re(\hat{f})\neq 0$
  we conclude $\Im(\hat T)=rN$ with $r\colon M\to \R$ and thus
  $\hat{N}=-N$. From Theorem~\ref{thm:DT is CMC} we know that
  $\Im(\hat{f})$ is a constant mean curvature surface whose Gauss map
  is $-N$ and therefore $\Im(\hat{f})$ is the parallel constant mean
  curvature surface of $f$.  Now \eqref{eq: initial} and
  (\ref{eq:Tinvers}) imply that $\Re(\hat T) = \frac{\hat b}{1-\hat
    a}$ which, using $\hat a^2 +\hat b^2=1$, shows that $\hat a
  =\frac{\Re(\hat T)^2-1}{\Re(\hat T)^2 +1}\in\R$ and therefore
  $\mu\in S^1$. But then Theorem~\ref{thm:DT is not CDT} implies that
  $\hat{f}$ is a classical Darboux transform.
\end{proof}

So far we have seen that $\mu$--Darboux transforms are classical
Darboux transforms if and only if $\mu\in\R_*\cup S^1$. We now turn to
the question of which classical Darboux transforms are $\mu$--Darboux
transforms.

\begin{theorem}
  A classical Darboux transform $f^\sharp\colon M \to\R^4$ of a constant mean curvature surface
  $f\colon M\to \R^3$ is a
  $\mu$--Darboux transform if and only if $T =f^\sharp-f$ satisfies
  the Riccati equation
\begin{equation}
\label{eq:ini}
dT = r Tdg T -df\,, \quad
(T-N)^2 = r\invers-1\quad r\in\R\setminus\{0,1\}\,.
\end{equation}
In this case Theorem \ref{thm:DT is not CDT} shows that $\mu\in
\R_*\cup S^1\setminus\{1\}$.  When $\mu\in S^1$ the classical Darboux
transform $f^\sharp$ is a translate of the parallel constant mean curvature surface $g=f+N$
and for $\mu\in\R_{*}$ it is a constant mean curvature surface in $\R^3$.
\end{theorem}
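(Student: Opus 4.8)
The plan is to move freely between the two descriptions of a transform $f^\sharp=f+T$: the Riccati description \eqref{eq: riccati} of a classical Darboux transform and the prolongation description of a $\mu$--Darboux transform, using the evaluations on the constant section $e$ already at hand, namely $*Ae=\tfrac12 e\,df$ and $*Qe=\tfrac12 e\,dg$ (read off from $(\nabla J)'=-2*A$, $(\nabla J)''=2*Q$ together with \eqref{eq:J} and \eqref{eq:def H}), and $\hat Ce=2e\hat T\invers$ from \eqref{eq:Tinvers}. Since $H=1$, the parallel surface $g=f+N$ is a dual surface, and as dual surfaces are unique up to translation and a real scaling every dual surface is $rg+\mathrm{const}$; hence the classical Darboux Riccati equation \eqref{eq: riccati} reads $dT=-df+r\,T\,dg\,T$ for some $r\in\R_*$, which is the first equation of \eqref{eq:ini}. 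It remains to identify which $r$, and which solutions, come from $\nabla^\mu$--parallel sections.

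For the forward implication suppose $f^\sharp$ is a classical Darboux transform which is also a $\mu$--Darboux transform, so $T=\hat T$. By Theorem~\ref{thm:DT is not CDT} we have $a=\tfrac{\mu+\mu\invers}2\in\R$. I would first evaluate the Riccati equation \eqref{eq:Riccati for C} for $\hat C$ on $e$: using $\nabla e=0$, $\hat Ce=2e\hat T\invers$, the identities for $*Ae,\,*Qe$, and $a\in\R$ (so that $(\hat a-1)e=e(a-1)$), this collapses to the scalar Riccati
\[
d\hat T=-df+\tfrac{1-a}{2}\,\hat T\,dg\,\hat T\,,
\]
identifying $r=\tfrac{1-a}2$. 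Next I would evaluate the algebraic relation \eqref{eq: initial} on $e$. Writing $w=\alpha\tfrac{b}{1-a}\alpha\invers$, relation \eqref{eq: initial} gives $\hat N=w-\hat T$ while \eqref{eq:hat N} gives $\hat N=-\hat T N\hat T\invers$; rewriting \eqref{eq:Tinvers} as $w=\tfrac{2}{1-a}\hat T\invers+N$ and eliminating $\hat N$ yields $w\hat T=\hat T(\hat T-N)$, whence
\[
(\hat T-N)^2=\tfrac{2}{1-a}-1=r\invers-1\,.
\]
This is exactly \eqref{eq:ini}.

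For the converse, given $T$ solving \eqref{eq:ini} with $r\in\R\setminus\{0,1\}$ I would set $a:=1-2r\in\R\setminus\{\pm1\}$ and choose $\mu$ with $\tfrac{\mu+\mu\invers}2=a$ (so $\mu\in S^1$ when $|a|\le1$, and $\mu\in\R_*$ otherwise). Since the $\nabla^\mu$ are flat the Riccati equation $dT=-df+r\,T\,dg\,T$ is integrable and its solution is determined by the value $T(p_0)$ at a base point. The forward computation shows the $\mu$--Darboux transforms already solve \eqref{eq:ini} with this same $r$, so it suffices to check that the locus cut out by $(T-N)^2=r\invers-1$ is precisely the set of values $\hat T(p_0)$ realised by $\mu$--Darboux transforms: when $\mu\in\R_*$ it is the $2$--sphere $\{\,N+\vec s:\ \vec s\in\Im\H,\ |\vec s|^2=1-r\invers\,\}$, matching the $\CP^1$ of $\nabla^\mu$--parallel sections, while when $\mu\in S^1$ it is the two real points $N\pm\sqrt{r\invers-1}$, realised by the unique $\mu$-- and $\mu\invers$--Darboux transforms of Theorem~\ref{thm:DT is CMC}. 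Choosing the parallel section with $\hat T(p_0)=T(p_0)$ and invoking uniqueness for the integrable Riccati equation then forces $T=\hat T$ on all of $M$, so $f+T$ is the $\mu$--Darboux transform.

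Finally, the excluded values $r=0,1$ correspond under $r=\tfrac{1-a}2$ to $\mu=1,-1$, i.e.\ to the point $\infty$ and to the parallel surface $g$; the remaining assertions are then immediate, since $\mu\in\R_*\cup S^1\setminus\{1\}$ by Theorem~\ref{thm:DT is not CDT}, the case $\mu\in S^1$ gives a translate of $g$ by Theorem~\ref{thm:DT is CMC}, and the case $\mu\in\R_*$ gives a constant mean curvature surface in $\R^3$ by Theorem~\ref{thm:DT is CMC} and Remark~\ref{rem:realpart0}. I expect the main obstacle to be the converse direction: one must verify that the pointwise constraint $(T-N)^2=r\invers-1$ is genuinely propagated by the Riccati flow and is equivalent to the algebraic initial condition \eqref{eq: initial} characterising prolongations of $\nabla^\mu$--parallel sections — equivalently, that it forces the reconstructed complex structure $\hat I$ to be $\hat\nabla^\mu$--parallel — so that the solution really is a $\mu$--Darboux transform and not merely a classical one.
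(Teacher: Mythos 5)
Your proof is correct, and while your forward direction is essentially the paper's, your converse takes a genuinely different route. For the forward implication the paper likewise evaluates the Riccati-type equation \eqref{eq:Riccati for C} on $e$ to get $r=\tfrac{1-\hat a}{2}$, and then squares $(Tr)\invers+N=\tfrac{\alpha b\alpha\invers}{1-a}$ from \eqref{eq:Tinvers} to get $\bigl((Tr)\invers+N\bigr)^2=\tfrac{\hat b^2}{(1-\hat a)^2}=r\invers-1$, equivalent to $(T-N)^2=r\invers-1$; your detour through \eqref{eq: initial} and $\hat N=-\hat TN\hat T\invers$ is just a variant of the same computation. For the converse, however, the paper constructs the $\nabla^\mu$-parallel section explicitly: from $T$ it defines $\hat a=1-2r$, $\hat b=2(T\invers+Nr)$; for $r\in(0,1)$ it reads off in closed form that $\hat b$ is real and $T=N+\tfrac{\hat b}{1-\hat a}$, a translate of $g$; and for $r\in\R\setminus[0,1]$ it introduces the flat quaternionic connection $\hat\nabla=\nabla+\omega$ with $\omega e=e\,df\,T\invers$, proves $d(\alpha\invers\hat b\alpha)=0$ for a $\hat\nabla$-parallel $\varphi=e\alpha$, and normalises $\hat b\varphi=\varphi b_0i$ so that $\varphi$ is $\nabla^\mu$-parallel for $\mu=\hat a-b_0\in\R$ with prolongation exactly $f^\sharp=f+T$. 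You instead combine uniqueness of the integrable Riccati flow with surjectivity of the initial-value map from $\nabla^\mu$-parallel sections onto the constraint locus; this scheme is sound (and note your closing worry about the constraint being ``propagated'' is moot in it: the hypothesis \eqref{eq:ini} supplies the constraint everywhere, and your matching only uses its value at $p_0$), but the asserted matching for $\mu\in\R_*$ is the one step you leave unverified. It is, however, routine: given $T_0$ with $(T_0-N(p_0))^2=r\invers-1$, set $\hat b_0=2(T_0\invers+N(p_0)r)$; the constraint gives $\hat b_0^2=4r(1-r)=b^2<0$, so $\hat b_0\in\Im\H$ has the same norm as $b$, and transitivity of the conjugation action of $\H_*$ on $2$-spheres in $\Im\H$ yields $\alpha_0$ with $\alpha_0 b\alpha_0\invers=\hat b_0$, whence the parallel section through $e\alpha_0$ has $\hat T(p_0)=T_0$ because $a-1+2r=0$ (and $\alpha\mapsto\alpha b\alpha\invers$ descends to a bijection from the $\CP^1$ of parallel lines onto that sphere). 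So the two proofs trade the paper's differential computation $d(\alpha\invers\hat b\alpha)=0$ for your appeal to Frobenius integrability and ODE uniqueness: yours is somewhat more conceptual and cleanly separates the pointwise algebra from the flow, while the paper's is self-contained, constructive, and exhibits the section and the value of $\mu$ explicitly.
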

\begin{proof}

  Let $f^\sharp: M \to\R^4$ be a $\mu$--Darboux transform for
  $\mu\in\R_*\cup S^1\setminus\{1\}$.  With (\ref{eq:Riccati for C})
  we see that $T = f^\sharp - f$ satisfies the Riccati equation with
  $r= \frac{1-\hat a}2\in\R$ since $\mu\in\R_*\cup S^1$. Now
  (\ref{eq:Tinvers}) gives
  \[
( (Tr)\invers + N)^2 = \frac{\hat b^2}{(1-\hat a)^2} = r\invers -1\,,
\]
where we used $\hat a^2 + \hat b^2=1$. Since $r\in\R_*$ this equation
is equivalent to $(T-N)^2 = r\invers -1$.

  Conversely, let $T$ be a solution of the Riccati equation with
  $r\in\R_*, r\not=1$ and (\ref{eq:ini}).  We put
\[
\hat a
= 1-2r  \quad \text{ and }  \quad\hat b = 2(T\invers + Nr) \,.
\]
If $r\in(0,1)$ then $\hat a^2 +\hat b^2=1$ and $\hat a\in\R$ imply
$\hat b\in\R$. In particular, $\mu=\hat a + I \hat b\in S^1$, and $T =
N + \frac{\hat b}{1- \hat a}$. Thus, $\hat f = f + T$ is a
$\mu$--Darboux transform of $f$ with $\mu\in S^1$.

If $r\in\R\setminus[0,1]$ then $\hat a^2 + \hat b^2=1$, $|\hat a|>1$,
and $\hat a\in\R$ imply that $\hat b$ has values in the imaginary
quaternions.  Moreover, the quaternionic connection $ \hat\nabla =
\nabla + \omega $, where $\omega\in\Omega^1(\Li)$ is defined by
\[
\omega e= edf T\invers\,,
\]
is flat since $T$ satisfies the Riccati equation. Let $\varphi =
e\alpha$ be a $\hat\nabla$--parallel section, that is
$(d\alpha)\alpha\invers = - df T\invers$. Using again the Riccati
equation together with (\ref{eq:ini}) we see
\[
d(\alpha\invers \hat b \alpha)=  2\alpha\invers\left( [df T\invers, T\invers + Nr]  + d(T\invers) + dN r\right)\alpha = 0\,.
\]
Thus we may assume after scaling $\varphi$ by a quaternion that $\hat
b\varphi = \varphi b_0 i$ with $b_0\in\R$ since $\hat b^2 < 0$. In
particular,
\[
(J(\hat a-1) + Ib_0)\varphi = 2\varphi T\invers
\]
so that $\varphi$ is a parallel section of the complex connection
$\nabla^\mu$ for $\mu= \hat a -b_0\in\R$.  The prolongation of
$\varphi$ gives a $\mu$--Darboux transform of $f$ which is exactly $f^\sharp
= f+T$.
\end{proof}

In \cite{darboux_isothermic} it is shown that a classical Darboux
transform $f^\sharp\colon M \to\R^3$ of a constant mean curvature surface
$f\colon M \to\R^3$ has constant mean curvature if and only if
$T=f^\sharp-f$ satisfies \eqref{eq:ini} with
$r\in\R\setminus\{0,1\}$. Therefore we obtain:
\begin{cor} Let $f: M\to\R^3$ be a constant mean curvature surface.
  The classical Darboux transforms $f^\sharp: M \to\R^3$  of $f$ with
  constant mean curvature are exactly the $\mu$--Darboux transforms of
  $f$ with $\mu\in\R\setminus\{0,1\}$.
\end{cor}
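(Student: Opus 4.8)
The plan is to obtain this corollary directly by splicing together two equivalences that are already in hand: the characterisation from \cite{darboux_isothermic} of which classical Darboux transforms in $\R^3$ have constant mean curvature, and the immediately preceding theorem identifying which classical Darboux transforms are $\mu$--Darboux transforms. Both are phrased through the very same Riccati equation \eqref{eq:ini}, so no new computation is needed; the only genuine step is to read off the correct range of the spectral parameter, and for that I would invoke the real--part formula of Remark~\ref{rem:realpart0}.

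Concretely, for the forward inclusion I would start from a classical Darboux transform $f^\sharp\colon M\to\R^3$ of $f$ having constant mean curvature. By \cite{darboux_isothermic} its displacement $T=f^\sharp-f$ then solves \eqref{eq:ini} with $r\in\R\setminus\{0,1\}$, and the preceding theorem turns this Riccati condition into the statement that $f^\sharp$ is a $\mu$--Darboux transform. Since $f^\sharp$ takes values in the same copy $\R^3=\Im\H$ as $f$, its real part vanishes; by Remark~\ref{rem:realpart0} a $\mu$--Darboux transform has vanishing real part precisely when $\mu\in\R_*$, since a non--real non--unitary $\mu$ yields a nonzero constant real part while a unitary $\mu=e^{i\theta}$ yields $\cot\tfrac{\theta}{2}$, vanishing only at the real point $\mu=-1$. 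Hence $\mu\in\R\setminus\{0,1\}$. For the reverse inclusion I would take any $\mu\in\R\setminus\{0,1\}=\R_*\setminus\{1\}$: Theorem~\ref{thm:DT is not CDT} makes the $\mu$--Darboux transform classical, Remark~\ref{rem:realpart0} places it in $\R^3$, and the preceding theorem (or Theorem~\ref{thm:DT is CMC}) guarantees it has constant mean curvature. This closes both directions.

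The one place that requires care, and which I expect to be the main obstacle, is matching the Riccati range against the spectral range: the set $r\in\R\setminus\{0,1\}$ splits into $r\in(0,1)$, which the preceding theorem sends to unitary $\mu$ and to transforms displaced into a \emph{parallel} $\R^3$ with nonzero real part, and $r\in\R\setminus[0,1]$, which it sends to real $\mu$ and to transforms remaining in the original $\R^3$. Insisting that $f^\sharp$ land in the same $\R^3$ as $f$ is exactly what discards the unitary branch, and it is the explicit real--part computation of Remark~\ref{rem:realpart0} that lets one make this selection rigorously rather than by inspection.
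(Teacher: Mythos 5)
Your proposal is correct and follows essentially the same route as the paper, whose (implicit) proof is exactly this splice: the Riccati characterisation of \cite{darboux_isothermic} combined with the preceding theorem, with the unitary branch $r\in(0,1)$ discarded because those transforms leave the original $\R^3$. The only cosmetic difference is that you invoke the real--part formula of Remark~\ref{rem:realpart0} to perform that selection, where the paper reads it off the final sentence of the preceding theorem ($\mu\in S^1$ gives a translate of $g=f+N$ with nonzero real part, $\mu\in\R_*$ stays in $\R^3$); both arguments are the same in substance.
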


\section {The \eigenline Spectral Curve}

Thus far we have discussed the local properties of $\mu$--Darboux
transforms. We now turn our attention to the question of global
existence of solutions: given a constant mean curvature surface $f: M
\to\R^3$ where $M$ has topology, is there a $\mu$--Darboux transform
$\hat f: M\to S^4$ of $f$ defined on $M$ rather than on the universal
cover $\widetilde{M}$ of $M$? In general, this question is hard to
decide, however in the case of a constant mean curvature torus $f: T^2
\to \R ^ 3 $ we shall see that there are many such
transformations. In fact  we will show that the space of $\mu $--Darboux transforms of $ f $ is given by
Hitchin's spectral curve
\cite{Hitchin:90} together with finitely many complex projective lines.
Henceforth when we refer to Darboux transforms of a
constant mean curvature torus we will always assume that they are
defined on $T^2$, rather than merely on its universal cover.

As many authors have noted (see eg. \cite{Pohlmeyer:76, Uhlenbeck:89,
  Hitchin:90}), a harmonic map $ N \colon M\to S^2$ from a Riemann
surface $M$ into the $2$-sphere gives rise to a family of flat
connections $\nabla^{\zeta}$ on a trivial $\C^2$-bundle.  In the
quaternionic setting we view our trivial complex bundle as a
quaternionic line bundle $W$ over $M$ with a trivial connection
$\nabla$. Choosing a $\nabla$--parallel section
\newcommand{\pphi}{\phi} $\pphi$, the harmonic map $N$ gives rise to a
harmonic complex structure $J$ on $W$ by setting $J\pphi=\pphi N$.
Then the endomorphism $J$ is parallel with respect to the pull back
\[
\widetilde\nabla =\nabla +\frac 12 J^{-1}\nabla J
\]
under $N$ of the Levi-Civita connection of $S^2$.  Writing
$\omega=-\frac 12 J^{-1}\nabla J$, the family of flat connections is
given by
\begin {equation}
\nabla ^\zeta =\widetilde\nabla +\zeta\omega ^ {(1, 0)}+\zeta ^ {-1} \omega ^ {(0, 1)}, \quad \zeta\in\C_*\,,\label{eq:conns}
\end {equation}
where the type decomposition of $\omega$ is with respect to the
constant complex structure $I$ on $W$ which is right multiplication by
the quaternion $i$. However, in Lemma \ref {lem:family_unit_circle} we
encoded the harmonicity of a complex structure $J$ on a trivial
quaternionic line bundle as the requirement that the connections
(\ref{eq:complex_family})
\[
\nabla^\mu = \widetilde\nabla + (a + Jb) A + Q,\quad \mu = a + bI\in\C
_*
\]
have zero curvature. Recall here that $A = \frac 14(J \nabla J +
*\nabla J)$ and $Q= \frac 14(J\nabla J -*\nabla J)$ give the type
decomposition of $\omega$ with respect to the complex structure $J$ on
$W$.
\begin {lemma}
  The families of flat connections $\nabla ^\mu $ and $\nabla ^\zeta$
  defined above are gauge equivalent, with $\mu =\zeta ^ 2 $.
\end {lemma}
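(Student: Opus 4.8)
The plan is to exhibit an explicit $\widetilde\nabla$--parallel gauge transformation intertwining the two families; the essential point is that both connections are built from the \emph{same} one--form $\omega = A+Q$ and differ only in how its type--components are weighted, so the gauge change can be made purely algebraic. First I would put both families in a common form. Writing $\omega = A+Q$ with $A=\omega'$, $Q=\omega''$ the $J$--type parts (so $*A=JA=-AJ$ and $*Q=-JQ=QJ$), the $\mu$--family is $\nabla^\mu = \widetilde\nabla + (a+Jb)A + Q$. Since $I$ is right-- and $J$ is left--multiplication they commute, whence $IJ$ is an involution ($(IJ)^2=1$) with complementary projections $P_\pm = \tfrac12(1\pm IJ)$ onto sub-bundles $W_\pm\subset W$. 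Using $\mu=\zeta^2$, $a=\tfrac{\mu+\mu\invers}2$ and $Jb=\tfrac{\mu\invers-\mu}2 IJ$, one checks $(a+Jb)=\zeta^{-2}P_+ + \zeta^2 P_-$, so that
\[
\nabla^\mu = \widetilde\nabla + \zeta^{-2}P_+ A + \zeta^2 P_- A + Q\,.
\]
On the other hand, decomposing $\omega$ into $I$--types and using that $A$ and $Q$ each anticommute with $J$ (hence with $IJ$), so that $AP_\pm = P_\mp A$ and $QP_\pm=P_\mp Q$, gives $\omega^{(1,0)} = P_- A + P_+ Q$ and $\omega^{(0,1)} = P_+ A + P_- Q$; therefore
\[
\nabla^\zeta = \widetilde\nabla + \zeta(P_- A + P_+ Q) + \zeta^{-1}(P_+ A + P_- Q)\,.
\]

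Next I would take the gauge transformation $g=\zeta P_+ + P_-$, acting as multiplication by $\zeta$ (with respect to $I$) on $W_+$ and as the identity on $W_-$. The decisive simplification is that $g$ is parallel for $\widetilde\nabla$: indeed $\widetilde\nabla I=\widetilde\nabla J=0$ forces $\widetilde\nabla P_\pm=0$, and right multiplication by the constant $\zeta$ is $\widetilde\nabla$--parallel since $\widetilde\nabla$ is a quaternionic connection. Hence $g\invers\widetilde\nabla g=\widetilde\nabla$ and \emph{no} derivative term is produced; only the algebraic one--form is conjugated. Since $\zeta$ commutes with $P_\pm$, $A$ and $Q$, and $AP_\pm=P_\mp A$, $QP_\pm=P_\mp Q$, a short computation gives $g\invers(P_\mp A)g=\zeta^{\pm1}P_\mp A$ and $g\invers(P_\mp Q)g=\zeta^{\pm1}P_\mp Q$, so that conjugation sends the coefficient $\zeta^{\pm1}$ on each $A$--term to $\zeta^{\pm2}$ and the coefficient $\zeta^{\pm1}$ on each $Q$--term to $1$. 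Thus
\[
g\invers\nabla^\zeta g = \widetilde\nabla + \zeta^{-2}P_+ A + \zeta^2 P_- A + P_+ Q + P_- Q = \nabla^\mu\,,
\]
with $\mu=\zeta^2$, which is the claimed gauge equivalence.

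The main obstacle is the bookkeeping of the interplay between the left--multiplication complex structure $J$ and the right--multiplication complex structure $I$: one must verify that $I$ and $J$ commute, that $A$ and $Q$ interchange the eigenbundles $W_\pm$ of $IJ$, and—above all—that the chosen $g$ is $\widetilde\nabla$--parallel. This last fact is what collapses the entire statement to a matching of coefficients ($\zeta\mapsto\zeta^2$ on the $A$--part and $\zeta\mapsto1$ on the $Q$--part); without it one would have to contend with an extra $g\invers dg$ term, and the clean relation $\mu=\zeta^2$ would be obscured.
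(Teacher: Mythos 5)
Your proof is correct and is essentially the paper's argument in diagonalised form: the paper rewrites $\nabla^\zeta=\widetilde\nabla+\lambda^{1/2}A+\lambda^{-1/2}Q$ with $\lambda^{1/2}=u+vJ$ and gauges by $\lambda^{1/4}$ (using $JA=-AJ$, $JQ=-QJ$ and $\widetilde\nabla J=0$), and since $\lambda^{1/2}$ acts as $\zeta^{\mp1}$ on the $\pm 1$ eigenbundles of $IJ$, your gauge $g=\zeta P_++P_-$ coincides, up to the irrelevant constant scalar $\zeta^{1/2}$, with the paper's $\lambda^{-1/4}$, so the two conjugations are identical. The only (cosmetic) advantage of your version is that $g$ is polynomial in $\zeta$, so no choice of root of $\lambda$ is needed.
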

\begin {proof}
  Write $\zeta =u +v I $ with $ u=
  \frac{\zeta+\zeta\invers}{2}$, $v= \frac{\zeta\invers -
    \zeta}{2}I$ so that $ u ^ 2+ v ^ 2 =1$.  Then \eqref{eq:conns} shows that
\begin{eqnarray*}
  \nabla ^\zeta 
&=&\widetilde\nabla + (u+vJ) A + (u- vJ) Q.
\end{eqnarray*}
Define $\mu =\zeta ^ 2 $, and put $\lambda = a + b J$.  Furthermore,
denote by $\lambda ^\frac 12 = u + vJ $ the choice of square root
whose coefficients agree with those of $\zeta $.
Since $J$ is parallel with respect to $\widetilde\nabla $ we
see 
\[
\lambda ^ {\frac { 1} {4}}\cdot\nabla ^ \zeta =\lambda ^ {\frac { 1} {4}} (\widetilde\nabla +\lambda ^\frac {1} {2}A+\lambda ^{-\frac {1} {2}} Q)\lambda ^ {-\frac { 1} {4}}  = \widetilde\nabla +\lambda A +Q =\nabla ^\mu.
\]
Note that both choices of square root of $\lambda ^\frac 12 $ produce the same gauge. 
\end {proof}

We now return to those harmonic maps $N$ arising as Gauss maps of
constant mean curvature immersions $f\colon M\to \Im\H \simeq\R ^ 3
$. In this case $W=V/L$ where $L\subset V$ is the line subbundle of
the trivial $\H^2$-bundle $V$ given by the immersion $f$. As explained
earlier the point at infinity $e\H$ defines a splitting $V=L\oplus
\underline{e\H}$ which induces on $V/L=\underline{e\H}$ a trivial
connection $\nabla$ with parallel section $\pphi=e$. Specialising to
the case when $M=T^2$ is a torus for $\gamma\in \pi_1(T^2,p)$ let
\[
H ^\mu _\gamma (p)\in {\bf SL}(2,\C)
\]
be the holonomy of $\nabla ^\mu$ about $\gamma$ with base point $p\in T^2$.
For generic  $\mu\in\C_*$ there is a unique pair of lines $ {\mathcal {E}} _\mu (p) $ and $ \widetilde {{\mathcal {E}}} _\mu (p)
$ in $(V/L)_p $ which vary holomorphically in $\mu $ and are eigenlines
for $ H ^\mu _\gamma (p) $.   Since the fundamental group of the torus is abelian, the holonomy matrices for different generators $\gamma\in\pi _1 (T^2,p) $ commute, and so the eigenlines do not depend upon the choice of $\gamma $.

We will define the eigenline spectral curve of $ f $ by taking the minimally branched 2-sheeted cover of $\CP ^ 1$ on which these eigenlines, and their limits as $\mu\rightarrow 0,\infty $, are well-defined.
 To determine the branching over $\mu = 0,\infty $, we  investigate the limiting behaviour of the eigenlines and eigenvalues of the holonomy.
Denote by $ E (p)
 $ the $i$-eigenspace of $ J (p) $, then $
E(p) j $ is the $(-i)$-eigenspace.
\begin {theorem}
\label{thm:limiting}
\begin {enumerate}
\item
The (common) eigenlines of the holonomy $ H ^ \mu (p) $ have the following holomorphic limits:
\begin{eqnarray*}
\lim _ {\mu\rightarrow 0} \mathcal{ E} _ {\mu}(p) &=& \lim _ {\mu\rightarrow 0}\widetilde {\mathcal{ E}} _ {\mu}(p) ={ E }(p)j \\
 \lim _ {\mu\rightarrow \infty } \mathcal {E} _ {\mu} (p)&=& \lim _ {\mu\rightarrow \infty }{\widetilde {\mathcal {E}}} _ {\mu}(p)=  E(p).
\end{eqnarray*}
 \item These eigenlines each agree in the limit only to first order.
\item For $\gamma \in\pi_1 (T ^ 2) $, denote by $ h _\gamma (\zeta) $ the eigenvalues  of the holonomy
$ H ^\mu_\gamma $, where $\zeta ^ 2 =\mu $. There is a punctured neighbourhood $ U $ of
$\infty\in\C\P ^ 1 $ and $ w_{-1}\in\C_*$,  $ u_i:\pi_1 (T ^ 2)\rightarrow \C $ satisfying
\begin {align*}
\log h_\gamma (\zeta ) & =\pm (w_{- 1}\gamma\zeta + u_ 0 (\gamma) + u_ 1(\gamma) \zeta ^ {- 1}+\cdots)\,, \;  {\mbox{ for $ \zeta \in U $}}\\
\log h_\gamma (\zeta ) & = \mp (\bar w_{- 1}\gamma\zeta^ {- 1} + \bar u_ 0(\gamma) + \bar u_ 1(\gamma) \zeta+\cdots)\,, \; {\mbox{ for $\bar\zeta ^ {- 1}\in U $}}
\end {align*}
where  we interpret $\gamma $ as a complex number.
\end {enumerate}
\end {theorem}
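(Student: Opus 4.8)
The plan is to analyse the family $\nabla^\mu$ near the punctures $\mu=0,\infty$ in the eigenframe of the harmonic complex structure $J$, using the gauge equivalence $\mu=\zeta^2$ of the preceding lemma to resolve the branching. Since $\widetilde\nabla J=0$, the splitting $V/L=E\oplus Ej$ into the $(\pm i)$-eigenbundles of $J$ is $\widetilde\nabla$-parallel, so in a local frame adapted to it $\widetilde\nabla$ is diagonal, while $A$ and $Q$ are off-diagonal because they anticommute with $J$. Writing $\lambda=a+bJ$, one checks $\lambda=\mathrm{diag}(\mu,\mu\invers)$ in this frame, so $\nabla^\mu=\widetilde\nabla+\lambda A+Q$ has off-diagonal part $\lambda A+Q$. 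Conjugating by the ($z$-independent) diagonal gauge $g_\zeta=\mathrm{diag}(\zeta^{-1/2},\zeta^{1/2})$ and using $\mu=\zeta^2$ produces the balanced form $\nabla^\zeta=\widetilde\nabla+\zeta\,\omega^{(1,0)}+\zeta\invers\omega^{(0,1)}$, where the leading coefficient $\omega^{(1,0)}=\left(\begin{smallmatrix}0&a_{12}\\ q_{21}&0\end{smallmatrix}\right)$ collects the $(1,0)$-parts (with respect to $I$) $A^{(1,0)}\colon Ej\to E$ and $Q^{(1,0)}\colon E\to Ej$.

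First I would read off the spectral data of the balanced family as $\zeta\to\infty$. By the harmonicity of $J$ the determinant $\det\omega^{(1,0)}=-a_{12}q_{21}$ is a holomorphic quadratic differential on $T^2$, hence equals $c\,dz^2$ for a constant $c=w_{-1}^2\neq0$ (nonzero since $f(T^2)$ is not contained in a round sphere); in particular neither $a_{12}$ nor $q_{21}$ ever vanishes. Thus $\omega^{(1,0)}$ has constant, distinct eigenvalue $1$-forms $\pm\nu=\pm w_{-1}\,dz$. Diagonalising $\omega^{(1,0)}=g_0\,\mathrm{diag}(\nu,-\nu)\,g_0\invers$ pointwise and gauging by $g_0$ brings $\nabla^\zeta$ to a connection of the form $d+\zeta\,\mathrm{diag}(\nu,-\nu)+(\text{lower order})$. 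Because $\nu$ is nowhere zero the two diagonal entries are separated, the off-diagonal remainder is exponentially suppressed, and the holonomy eigenvalues admit convergent expansions $\log h_\gamma(\zeta)=\pm(\zeta\oint_\gamma\nu+u_0(\gamma)+u_1(\gamma)\zeta\invers+\cdots)$. Since $\oint_\gamma\nu=w_{-1}\oint_\gamma dz=w_{-1}\gamma$, this is precisely the first expansion in (iii). The limiting eigenlines of the balanced family are the eigenbundles $\ell_\pm$ of $\omega^{(1,0)}$, which one computes to be $\{(\sqrt{a_{12}},\pm\sqrt{q_{21}})\}$.

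Next I would transfer back to $V/L$. The eigenlines of $H^\mu_\gamma$ are the $g_\zeta\invers$-images of those of $H^\zeta_\gamma$, and $g_\zeta\invers=\mathrm{diag}(\zeta^{1/2},\zeta^{-1/2})$ carries $\ell_\pm$ to $\{(\sqrt{a_{12}},\pm\zeta\invers\sqrt{q_{21}})\}$, which converge to the first axis $E(p)$ as $\zeta\to\infty$; this gives $\lim_{\mu\to\infty}\mathcal{E}_\mu(p)=\lim_{\mu\to\infty}\widetilde{\mathcal{E}}_\mu(p)=E(p)$, with holomorphic limit by the convergence above. The symmetric analysis near $\mu=0$ (where the dominant block now maps $E\to Ej$) yields the limit $E(p)j$, proving (i). For (ii), the two branches are interchanged by $\zeta\mapsto-\zeta$ and have opposite first-order coefficients $\pm\zeta\invers\sqrt{q_{21}/a_{12}}$, so they share the limit $E(p)$ but separate already at order $\zeta\invers$, i.e. meet to first order at a simple branch point. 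Finally, the second expansion in (iii) follows from the reality \eqref{eq:reality}: right multiplication by $j$ is $I$-antilinear and intertwines $\nabla^\mu$ with $\nabla^{\bar\mu\invers}$, hence conjugates the holonomy and implements $\zeta\mapsto\bar\zeta\invers$; applying this to the first expansion and conjugating the coefficients produces $\log h_\gamma(\zeta)=\mp(\bar w_{-1}\gamma\zeta\invers+\bar u_0(\gamma)+\bar u_1(\gamma)\zeta+\cdots)$ for $\bar\zeta\invers\in U$.

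The hard part will be the WKB step: upgrading the formal leading-order picture to genuine, convergent expansions of the eigenvalues and eigenlines that are holomorphic in $\zeta\invers$ near the punctures. The crucial leverage is that the quadratic differential $\det\omega^{(1,0)}$ is \emph{constant} on the torus, so the eigenvalue forms $\pm\nu$ are constant and nowhere vanishing; this spectral gap suppresses the off-diagonal coupling and lets one sum the Dyson–Neumann series for the holonomy with convergent $\zeta\invers$-coefficients. The subordinate technical points are verifying the holomorphicity of $\det\omega^{(1,0)}$ from the harmonicity equations and controlling the diagonalising gauge $g_0$ uniformly (which is where the nonvanishing $w_{-1}\neq0$, equivalently that $f$ does not lie in a sphere, is indispensable).
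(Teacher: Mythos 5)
Your formal skeleton is attractive and genuinely different from the paper's argument, and its algebra is consistent with the paper's conclusions: your observation that $\det\omega^{(1,0)}$ is a nowhere-zero \emph{constant} holomorphic quadratic differential is exactly the counterpart of the paper's Lemma~\ref{Lemma:determinant} (via $A^{(1,0)}=\Psi_0\,dz$ from Lemma~\ref{lemma:Psi} one has $w_{-1}^2=-\tr(\Psi_0\Psi_1)$), the collapse of both gauged eigenlines to $E(p)$ after applying $g_\zeta\invers$ reproduces (i), and the $\pm\zeta\invers\sqrt{q_{21}/a_{12}}$ separation reproduces (ii). But there is a genuine gap, and it sits at the crux rather than in the "subordinate technical points": the claim that the spectral gap lets you ``sum the Dyson--Neumann series for the holonomy with convergent $\zeta\invers$-coefficients.'' A constant nonvanishing gap lets you invert the adjoint action of the leading diagonal term and diagonalise $d+\zeta\,\nu\,(\text{diag})+B_0+\zeta\invers B_1$ \emph{formally} to all orders, but for linear systems with a large parameter such WKB series are in general only asymptotic (typically Gevrey-divergent); exponential suppression of the off-diagonal coupling yields asymptotic control of $h_\gamma(\zeta)$ in sectors, not holomorphy of $\log h_\gamma(\zeta)\mp w_{-1}\gamma\zeta$ across $\zeta=\infty$. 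Since the theorem is later used to assert that $\log h(\zeta)$ is a holomorphic \emph{chart} at the ends of the multiplier curve (proof of Theorem~\ref{thm:isomorphism}), convergence is the substance of (iii), not an afterthought; moreover your identification of the limiting eigenlines of the holonomy with the eigenbundles of $\omega^{(1,0)}$ --- hence your proofs of (i) and (ii) and the word ``holomorphic'' in (i) --- leans on the same unproven estimate. (A minor additional wrinkle: your pointwise diagonalising gauge $g_0$ requires global square roots $\sqrt{a_{12}}$, $\sqrt{q_{21}}$, and $E$, $Ej$ need not admit global frames on $T^2$, so a patching or covering argument is needed.)

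The paper circumvents precisely this analysis by complex-analytic, bundle-theoretic means: by \cite[Sec.~6.3]{klassiker} and the argument of \cite[Prop.~3.5]{Hitchin:90}, for $\mu$ near $\infty$ the space of holomorphic trace-free endomorphisms of $(V/L,(\nabla^\mu)^{(0,1)})$ is one-dimensional, consists of $\nabla^\mu$-parallel endomorphisms, and can be chosen to depend holomorphically on $\mu$ \emph{up to and including} $\mu=\infty$, giving $\Psi(p,\mu)=\Psi_0+\Psi_1\mu\invers+\cdots$ normalised by Lemma~\ref{lemma:Psi} so that $A^{(1,0)}=\Psi_0\,dz$. Since the holonomy eigenlines are eigenlines of the parallel $\Psi$, part (i) and the holomorphic extension of the eigenline bundle over $\zeta=\infty$ follow at once from the nilpotency of $\Psi_0$ (with $\im A^{(1,0)}=E=\ker A^{(1,0)}$) together with the reality \eqref{eq:reality}; trivialising the extended bundle by a family $\varphi(p,\zeta)=\varphi_0+\varphi_1\zeta\invers+\cdots$ holomorphic at $\zeta=\infty$, the connection form $\Omega_\zeta$ then has a \emph{convergent} Laurent expansion by construction, and elementary coefficient bookkeeping ($\omega_{-2}=0$, $\omega_{-1}=a_1\,dz$, $a_1^2=-\tr(\Psi_0\Psi_1)$ a nonzero constant) yields (iii), with $\log h=-\int_\gamma\Omega_\zeta$. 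If you wish to rescue your route, the cleanest repair is to replace the Dyson-series summation by exactly such a holomorphic-family argument --- establish holomorphic extension of the eigenline bundle at $\zeta=\infty$ first, then read off $\Omega_\zeta$ --- since proving convergence of the WKB expansion directly amounts to the ``much more involved'' asymptotic analysis of \cite{ana} that this paper was written to avoid.
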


\begin {proof}
We first give the proof of (i), which will include the statement and proof of Lemma~\ref {lemma:Psi} below.

  From \cite[Sec. 6.3]{klassiker} there are only finitely many $\mu$ for which
  the holonomy of $\nabla^\mu$ has just one eigenspace.  Thus we may
  let $U $ be a punctured neighbourhood of $\infty\in\CP^1$ consisting
  of $\mu $ for which the eigenlines of the holonomy $ H ^\mu $ are
  distinct.  The $(0,1)$ part of $\nabla^\mu$ with respect to the
  complex structure $I$ gives by \eqref{eq:nablaIdec} the complex
  holomorphic structure \[ (\nabla ^ \mu)^ {(0, 1)} =\nabla ^
  {(0, 1)} + (\mu ^ {- 1}-1)A ^ {(0, 1)}
\]
on $ V/L $ resulting in a complex rank two holomorphic vector bundle
over $ T^2\times(U \cup\{\infty\})$. The restriction $V/L_{(\cdot
  ,\mu)}$ is a bundle over $T^2$ with holomorphic structure
$(\nabla^\mu)^{(0,1)}$.

The argument of \cite[Prop.\hspace*{-1pt} 3.5] {Hitchin:90} may be applied here to show that $ U $ can be chosen so that for
each $\mu\in U $,
\[
\dim H ^ 0
\left(\End _0  (V/L_{(\cdot ,\mu)})\right)=1\,,
\]
and all holomorphic sections are in fact parallel with respect to
$\nabla^\mu$. Here $\End_0$ denotes the bundle of
trace free endomorphisms.
Thus we can take a family of non-trivial holomorphic sections
\begin {equation*}
\Psi (p,\mu) =\Psi_0(p) +\Psi_1(p)\mu^ {- 1} +\cdots 
\end {equation*}
of $\End_0(V/L_{(\cdot
    ,\mu)})$ for $\mu\in U \cup \{\infty\}$.
Since $\Psi$ is also $\nabla^\mu$--parallel, we may use it to investigate the limiting behaviour of the holonomy. We have
\begin {equation}
\left(\nabla ^ {(1,0)} + (\mu - 1)\ad_{A ^ {(1, 0)}} \right) (\Psi_0
 +\Psi_1\mu^ {- 1} +\cdots)=0\,.\label {eq:series}
\end {equation}
In particular
\begin {equation}
[ A ^ {(1, 0)},\Psi _0] =0 \label {eq:mu}
\end {equation}
and
\begin {equation}
 \nabla ^ {(1,0)}\Psi_0+ [A ^ {(1, 0)},\Psi_1] = 0\,.\label {eq:constantterm}
\end {equation}
In fact $ A ^ {(1, 0)} $ and $\Psi_0 $ are related by a {\it global} holomorphic differential, as we now show.
\begin {lemma} \label {lemma:Psi}Let $ dz $ be a global nontrivial holomorphic differential on the torus $ T ^ 2 $.
Then $\Psi (p,\mu) =\Psi_0(p) +\Psi_1(p)\mu^ {- 1} +\cdots $ may be chosen so that
\begin {equation*}
A ^ {(1, 0)} =\Psi_0dz.
\end {equation*}
\end {lemma}
\begin{proof}
We first show that $ A ^ {(1, 0)} $ is nowhere vanishing. Since $ -2 A
^ {(1, 0)} = (I + J)*A $ it suffices to show that $\im(*A)\nsubseteq
Ej $. But since $*A $ is right $\H $-linear and from \eqref
{eq:A=delta} is nowhere vanishing,   $\im(*A)\subseteq
Ej $  is impossible.
 Using \eqref {eq:mu},
 for any choice of parallel endomorphism $\Psi $, we have $\Psi_0 dz = b A ^ {(1, 0)}$ for some function $  b \colon T^2\to \C$. Then
\begin {equation}
 d ^\nabla (\Psi_0 dz)= \nabla \Psi_0\wedge dz = db\wedge A ^ {(1, 0)} + b d ^\nabla A ^ {(1, 0)}\label {eq:b}
\end {equation} and from
 \eqref{eq:harmonicity}
\[
0 = d ^\nabla*A = d ^\nabla (JA) =\nabla J\wedge A + J d ^\nabla A = - 2JA\wedge A + J d ^\nabla A,
\]
where the last equality used $\nabla J = 2 (*Q - * A) $ and $ Q\wedge A = 0 $. Thus
\begin {equation}
d ^\nabla A ^ {(1, 0)}= \frac{1}{2}d ^\nabla (A-I*A) = A\wedge A.\label {eq:1}
\end {equation}
Since $\Psi $ is holomorphic,
\[
\left(\nabla ^ {(0, 1)} + (\mu ^ {- 1}- 1)\ad_{A ^ {(1, 0)}} \right) (\Psi_0
 +\Psi_1\mu^ {- 1} +\cdots)=0,
\]
and equating constant terms  gives
\begin {equation}
 \nabla ^ {(0, 1)}\Psi_0 = [A ^ {(0, 1)},\Psi_0]. \label {eq:constantholomorphic}
\end {equation} Substituting this and \eqref {eq:1} into  \eqref {eq:b} gives $ db\wedge A ^ {(1, 0)} = 0 $ and hence $ \delbar b = 0 $ so we conclude that $ b$ is constant.
Scaling $\Psi $ by powers of $\mu $ if necessary, we may assume that $\Psi_0 $ is not the zero function and hence $ b\neq 0 $, proving the lemma.
\end {proof}
We henceforth choose $\Psi $ as in Lemma~\ref{lemma:Psi}.
Since $\Psi (p,\mu) $ is parallel, $\mathcal E_\mu (p)$ and
$\widetilde {\mathcal E}_\mu (p)$ are the eigenlines of $\Psi (p,\mu)
$.
Using $*A= JA = - A J$, we see
\[
\im A^ {(1, 0)} \subseteq  E\subseteq \ker A^ {(1, 0)}
\]
for the $+i$-eigenspace $E$ of $J$, and since $A^ {(1, 0)} $ is nowhere vanishing these are equalities.  Thus $A^ {(1, 0)}(p) $ is
nilpotent, with sole eigenvalue zero and just one eigenspace, namely $
E(p) $.  From Lemma ~\ref{lemma:Psi}, $ A ^ {(1, 0)}$ and $\Psi_0 $ have common eigenspaces, and we conclude that
\begin {equation}
\lim_{\mu\to\infty} {\mathcal {E}} _\mu(p) = \lim_{\mu\to\infty}\widetilde {\mathcal  E}_\mu (p)= E(p).\label {eq:E}
\end {equation}
From the reality condition \eqref{eq:reality} 
we see also that as $\mu\to 0 $, the limit of the eigenlines is $E(p)j$.
This concludes the proof of Theorem \ref{thm:limiting} (i).

We proceed now to the proof of (iii) of Theorem \ref{thm:limiting}, which will include the statement and proof of Lemma~\ref{Lemma:determinant}.
Let $ pr:\CP ^ 1\rightarrow\CP ^ 1 $ be the double cover $pr(\zeta)=\zeta^2 $. The eigenlines of $\Psi(p)$ define a line bundle $\mathcal E $ on $ T ^ 2\times (pr) ^ {- 1} (U) $, and writing $ \mathcal  E ^\zeta :=\left.\mathcal E\right|_{T ^ 2\times\{\zeta\}} $, the holomorphic structure on each $\mathcal  E ^\zeta $  is given by $ (\nabla ^ {\zeta ^ 2}) ^ {(0, 1)} $. From  \eqref {eq:E}, the bundle $\mathcal  E $ extends
holomorphically over $\zeta = \infty $.

For each $\zeta\in (pr) ^ {- 1} (U) $, the line bundle $\mathcal E ^\zeta $ carries the flat connection $\nabla ^ {\zeta ^ 2} $, so has degree 0 and hence is smoothly trivialisable. Since degree is constant in families, $\mathcal E ^\infty = E $ also has degree 0 and we may choose a holomorphic family of smooth trivialising sections
\begin{equation}
\label{eq:expansion}
 \varphi  (p,\zeta) = \varphi _0 (p) +\zeta ^{-1} \varphi _1 (p) +\zeta ^ {-2}\varphi_2 (p) +\cdots\mbox { for }\zeta\in (pr) ^ {- 1} (U)\cup\{\infty \}.
\end{equation}
Denote by $\Omega_\zeta $ the connection form of the restriction of $ \nabla ^ {\zeta ^ 2} $ to $\mathcal  E ^ {\zeta} $ with respect to this trivialisation, that is
\begin {equation}
(\nabla  + (\zeta ^ 2 -1) A ^ {(1, 0)} + (\zeta ^ {- 2} -1) A ^ {(0, 1)}) ( \varphi _0+\zeta ^ {- 1} \varphi _1 \cdots ) =\Omega_\zeta ( \varphi _0+\zeta^{-1} \varphi _1+ \cdots )\,,\label {eq:conn}
\end {equation}
and $\Omega_\zeta = \zeta ^ 2\omega _{-2}+ \zeta\omega _{-1} + \omega _0+ \zeta ^ {- 1}\omega_1+\cdots  $.
Let  $ k $ be a function on the universal cover $\C $ of $ T ^ 2 $ so that $ k (p)\varphi (p,\zeta) $ is parallel with respect to $\nabla ^ {\zeta ^ 2} $. Then
\[
dk  = -\Omega_\zeta k\,.
\]
Integrating gives
\[
\log h (\zeta) =\log\left (\frac {k (p +\gamma)} {k (p)}\right) = -\int_\gamma\Omega_\zeta.
\]
Thus
to prove the first equation in Theorem~\ref{thm:limiting}  (iii) it suffices to show that
for $\zeta $ near $\infty $, the connection form may be written as
\[
\Omega_\zeta = \zeta w_{- 1} dz + \omega_0 (p) + \zeta^ {- 1} \omega_1 (p) +\cdots
\mbox { for $ w _ { - 1} $ 
a non-zero constant.}
\]
The section $\varphi_0$ trivialises the bundle $\mathcal E ^\infty = E $ and hence is nowhere vanishing, so since $ A ^ {(1, 0)} $ has sole eigenvalue zero the $\zeta ^ 2 $ term of  \eqref {eq:conn} gives that $ \omega_{-2} = 0 $.
Using Lemma~\ref{lemma:Psi}, the $\zeta $ term of  \eqref {eq:conn} is
\begin {equation}\label {eq:connection}\Psi_0\varphi_1dz =\varphi_0\omega_{-1}.
\end {equation}
Using that $\Psi (p,\mu)  $ is trace-free,  
\begin {align}
\det\Psi (p,\mu) & =-\frac 12\tr\left (\Psi (p,\mu)^ 2\right)\nonumber\\
& =\det \Psi _ 0(p) -\tr(\Psi_0(p)\Psi_1(p)) \mu^ {- 1} + c _ 2\mu ^ {-2} +\cdots\nonumber\\
& =\mu^ {- 1} (\tr (\Psi_0 (p)\Psi_1 (p)) + c _1(p)\mu^ {- 1} +\cdots). \nonumber 
\end {align}
Writing $\zeta ^ 2 =\mu $, we see that
 the eigenvalues $\pm a (p,\zeta) $ of $\Psi (p,\mu) $ are of the form
\[
a (p,\zeta) = a_1 (p)\zeta^ {- 1} + a_2 (p)\zeta ^ {-2} +\cdots ,\,\text {where }
a_1 ^ 2 (p)  = -\tr (\Psi_0 (p)\Psi_1 (p)).
\]
The section $\varphi $ is an eigenvector of $\Psi $
 and the $\zeta^ {- 1} $ term of the eigenvector equation is
\begin {equation}
\Psi_0\varphi_1 = a_1\varphi_0.\label {eq:eigenvector}\end {equation} Combining this with \eqref {eq:connection} gives $ \omega_{-1} = a_1 dz $ so that
\[
\Omega_\zeta = \zeta a_{1} (p) dz + \omega_0 (p) + \zeta^ {- 1} \omega_1 (p) +\cdots,\,\text {where }a_1 ^ 2 (p)  = -\tr (\Psi_0 (p)\Psi_1 (p)).
\]
The first equation in Theorem~\ref{thm:limiting}  (iii) now follows from the following lemma.

\begin {lemma}
The trace $\tr (\Psi_0 \Psi_1) $ is a non-zero constant. 
\label{Lemma:determinant}
\end {lemma}
\begin {proof} We first show that this trace vanishes if and only if $\nabla  - A $ preserves  $E=\mathrm {ker} \,\Psi_0 $.
 From \eqref {eq:constantholomorphic} we know that $\ker \Psi_0 $ is preserved by $ (\nabla - A) ^ {(0, 1)} $. On the other hand  \eqref {eq:constantterm} and  Lemma~\ref {lemma:Psi} show that $ (\nabla - \ad_A) ^ {(1, 0)} \Psi_0 = [\Psi_1,\Psi_0 ] dz $. Thus $\nabla-A$ preserves $E$ if and only if $[\Psi_1,\Psi_0 ] $ is a multiple of $\Psi_0 $, which is equivalent to
$\tr (\Psi_0\Psi_1) =0$. Restricting
\[
(\nabla-\ad_A) J= 2QJ
\]
to $E$ and recalling that $Q$ interchanges $E$ and $Ej$, we have $\tr (\Psi_0\Psi_1) \neq 0$. Since $\Psi _0,\Psi_1 $ are holomorphic on $ T ^ 2 $, $\tr (\Psi_0\Psi_1) $ is constant.
\end{proof}

From the reality condition  \eqref {eq:reality} we have
\[
\log h (\bar\zeta ^ {- 1}) = -\log h (\zeta)\,,
\]
which completes the proof of Theorem~\ref{thm:limiting}   (iii). 
For (ii), we observe that
 since $ a_1 $ and $\varphi_0 $ are nowhere vanishing, from \eqref {eq:eigenvector} the same is true of $\varphi_1 $, so the eigenlines of the holonomy agree only to first-order at $\zeta =\infty $. This concludes our verification of Theorem~\ref {thm:limiting}.
\end {proof}

Let $ q $ be the unique (up to real scaling) quaternionic Hermitian
form on $ V/L$ that is parallel with respect to $\nabla $, and denote
by
\begin {equation*}
q= q_\C +j\det 
\end {equation*} its splitting into a complex valued hermitian form and a complex-linear non-degenerate 2-form
with respect to multiplication by the constant quaternion $ i$. We use $\det $ to measure the order to which eigenlines agree.
Define a polynomial
\[
P(\mu) =\prod (\mu -\mu _\alpha) ^{n_\alpha}
\]
by the condition that $\mu _ \alpha\in\C_* $ is a zero of $ P $ of
order $ n _\alpha $ if and only if $ {\mathcal {E}} _ {\mu _\alpha}
(p)$ and $ \widetilde {\mathcal {E}} _ {\mu _\alpha} (p)$ agree to
order $ n _\alpha $, as measured by the order of vanishing of the 2-form $\det$. 
By (\ref {eq:reality}) the polynomial $P$ is preserved by
$\mu\mapsto\bar\mu^ {- 1} $.  From Theorem \ref{thm:limiting} the
eigenlines of the holonomy have a unique limit at each of $\mu=0$ and
$\mu=\infty$ and  the two eigenlines agree there only to first-order. Thus we define the \emph{eigenline spectral curve} $
\Sigma _ e $ to be the curve 
\[
y^2 =\mu P (\mu)\,.
\]

By construction, for each $ p\in T ^ 2 $ the eigenlines of $ H ^\mu
_\gamma (p) $ 
define a line bundle $\mathcal E (p) $ on
$\Sigma _ e$ which by the above theorem extends holomorphically to
$P _\infty =y^ {- 1} (\infty) $ and $P _0 = y ^ {- 1} (0) $. We call
the line bundles $\mathcal E(p) $ {\em eigenline bundles}. The restriction of these bundles to an open set without singular points is holomorphic, and since $\Sigma_e $ is smooth in a neighbourhood of  $ P_\infty $ and $ P_0 $, we may define the eigenline bundles over these points by holomorphic extension. The
symmetry of $P$ ensures that $\Sigma _ e $ possesses the fixed point
free real structure $\rho$ corresponding to the action of the
quaternion $j$ on $V/L$, that is, $\rho^* \mathcal E(p)=\mathcal
E(p)j$.  We can similarly define a polynomial $ Q (\zeta) $ using the
holonomy of the family $\nabla ^\zeta $.  Taking $\mu =\zeta ^2$,
since the connections $\nabla ^\mu $ and $\nabla ^\zeta $ are gauge
equivalent, the two eigenlines of the holonomy of $\nabla ^\zeta $
agree to the same order as the eigenlines of the holonomy of $\nabla
^\mu $.  Thus
\[
Q (\zeta) = P (\mu).
\]
Define a hyperelliptic curve $ Y$ by
\[
\nu^ 2 =Q(\zeta);
\]
this is the spectral curve used in \cite {Hitchin:90}.  We have
shown above that $ Q $ is preserved under $\zeta\mapsto -\zeta $ and we denote by
 $\tau(\zeta,\nu)=(-\zeta,-\nu) $  the corresponding fixed point free holomorphic involution of $Y$.
 We then obtain
\begin {corollary}
\label {cor:genus}
The \eigenline spectral curve is the quotient
\[
 \Sigma_e \cong {Y}/{\tau}
\]
of the curve $Y$  by a holomorphic involution without fixed points.

\end {corollary}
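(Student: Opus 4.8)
The plan is to exhibit an explicit two-to-one holomorphic map $\Phi\colon Y\to\Sigma_e$ whose fibres are exactly the $\tau$-orbits, and then conclude that the induced map $Y/\tau\to\Sigma_e$ is an isomorphism. Since $\Sigma_e$ is cut out by $y^2=\mu P(\mu)$ and $Y$ by $\nu^2=Q(\zeta)=P(\zeta^2)$ with $\mu=\zeta^2$, the identity $y^2=\mu P(\mu)=\zeta^2\,Q(\zeta)=(\zeta\nu)^2$ dictates the map: I would set
\[
\Phi(\zeta,\nu)=(\zeta^2,\,\zeta\nu)=(\mu,y).
\]
First I would verify that $\Phi$ indeed lands in $\Sigma_e$, which is precisely the displayed identity, and that it is holomorphic (the formula is polynomial away from the points over $\zeta=0,\infty$). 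The $\tau$-invariance is immediate, since $\Phi(-\zeta,-\nu)=((-\zeta)^2,(-\zeta)(-\nu))=(\zeta^2,\zeta\nu)=\Phi(\zeta,\nu)$, so $\Phi$ descends to a holomorphic map $\bar\Phi\colon Y/\tau\to\Sigma_e$.

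The main content is to show $\bar\Phi$ is bijective, and the cleanest route is to write down an explicit inverse. Over a point $(\mu,y)\in\Sigma_e$ with $\mu\neq 0,\infty$, the two square roots $\pm\zeta$ of $\mu$ furnish the two preimages $(\zeta,y/\zeta)$ and $(-\zeta,-y/\zeta)$; a check using $y^2=\mu P(\mu)$ confirms $\nu^2=P(\zeta^2)$, so both lie on $Y$, and they constitute a single $\tau$-orbit. Equivalently, the assignment $(\mu,y)\mapsto[(\sqrt\mu,\,y/\sqrt\mu)]$ is a well-defined morphism into $Y/\tau$, because the sign ambiguity of $\sqrt\mu$ is exactly the action of $\tau$; this inverts $\bar\Phi$ over the generic locus.

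The step requiring care is the behaviour over the branch points $\mu=0,\infty$ of the double cover $pr$. Since the roots $\mu_\alpha$ of $P$ lie in $\C_*$ we have $P(0)\neq 0$, hence $Q(0)=P(0)\neq 0$, and the fibre of $Y$ over $\zeta=0$ consists of two distinct points $(0,\pm\nu_0)$ with $\nu_0^2=P(0)$. These are interchanged by $\tau$ (reconfirming that $\tau$ is fixed-point-free there) and are both carried by $\Phi$ to the single branch point $P_0=(0,0)$ of $\Sigma_e$; thus the $\tau$-orbit $\{(0,\nu_0),(0,-\nu_0)\}$ is the whole fibre over $P_0$, and the same argument at $\zeta=\infty$ covers $P_\infty$, using that Theorem~\ref{thm:limiting} identifies $P_0$ and $P_\infty$ as simple ramification points of $\Sigma_e\to\CP^1$. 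I expect this local analysis at the four ramification points — checking that $\Phi$ extends holomorphically and that the orbit structure persists — to be the only delicate point.

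Finally, with explicit mutually inverse morphisms $\bar\Phi$ and $(\mu,y)\mapsto[(\sqrt\mu,y/\sqrt\mu)]$ in hand, and $\tau$ acting without fixed points so that $Y/\tau$ is a smooth curve wherever $Y$ is, I would conclude that $\bar\Phi$ is an isomorphism, giving $\Sigma_e\cong Y/\tau$ as asserted.
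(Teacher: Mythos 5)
Your proposal is correct and takes essentially the same route as the paper, which states the corollary as immediate from the identity $Q(\zeta)=P(\mu)$ with $\mu=\zeta^2$ and the fixed-point-free involution $\tau(\zeta,\nu)=(-\zeta,-\nu)$: your map $(\zeta,\nu)\mapsto(\zeta^2,\zeta\nu)$ is exactly the canonical identification the paper leaves implicit. Your verification at the points over $\mu=0,\infty$ — using $P(0)\neq 0$ (roots of $P$ lie in $\C_*$) and the simple branching of $\Sigma_e$ there guaranteed by Theorem~\ref{thm:limiting} — supplies the only details the paper does not spell out.
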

The key property of these curves is that they have finite genus, and
so we are in the realm of algebraic geometry.  This was proven for $Y$
in \cite {Hitchin:90} and for $\Sigma _ e $ in \cite[Sec.\hspace{-2pt} 6.3]
{klassiker}.  We note that working with a quaternionic line bundle
rather than a complex rank two vector bundle makes the finite genus
result easier to prove for $\Sigma _ e $ than to prove
directly for $ Y $, so this corollary yields a simplification of the
proof in \cite {Hitchin:90}.

Denote by $\Sigma _ e ^ \circ$ the open \eigenline spectral
curve $\Sigma _ e\setminus \{P_0,P_\infty\} $.
\begin {theorem}
\label{thm:parametrized mu Darboux}
For a constant mean curvature immersion $ f\colon T ^ 2\rightarrow\R ^
3 $ of a 2--torus the space of $\mu $-Darboux transforms $\hat{f}\colon T^2\to \R^4$ of $ f $ is
given by the quotient of its open \eigenline spectral curve by the
(fixed point free) real structure $\rho $, together with finitely many
complex projective lines
\[
\{ \text{ $\mu$-Darboux transforms $\hat{f}\colon T^2\to \R^4$, } \mu\in\C_*\} =\Sigma^{\circ}_e / \rho \cup \CP ^ 1\cup\ldots\cup \CP^1,
\]
where the projective lines are distinct and each intersects
$\Sigma^{\circ}_e / \rho $ in one or two points. The pair
$(P_0,P_{\infty})$ corresponds to the original immersion
$f$.
\end {theorem}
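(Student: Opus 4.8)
The plan is to translate the problem into the language of holonomy eigendata and then read off the decomposition from the geometry of $\Sigma_e$ together with the finitely many $\mu$ at which the holonomy becomes central. First I would record that, by Lemma~\ref{lem:prolongation} and the quaternionic linearity of the prolongation, a $\mu$--Darboux transform $\hat f=\hat\varphi\H$ depends only on the quaternionic line $\varphi\H$ spanned by the $\nabla^\mu$--parallel section $\varphi$, and that $\hat f$ descends to $T^2$ exactly when $\varphi$ carries a monodromy $\gamma^*\varphi=\varphi h_\gamma$, $h_\gamma\in\H_*$. For $\mu\notin S^1$ the connection $\nabla^\mu$ is only $I$--complex linear, so its parallel sections form a complex plane $P_\mu$ on which the deck action is the holonomy $H^\mu_\gamma$. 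If $\gamma^*\varphi=\varphi h_\gamma$ with $h_\gamma=c_1+c_2j$, then $\varphi c_1\in P_\mu$ while $\varphi j$ is $\nabla^{\bar\mu^{-1}}$--parallel by \eqref{eq:reality}; since $A^{(1,0)}$ and $A^{(0,1)}$ are nowhere vanishing with complementary kernels $E$ and $Ej$ (Lemma~\ref{lemma:Psi} and its proof), comparing $\nabla^\mu$ with $\nabla^{\bar\mu^{-1}}$ forces $c_2=0$. Hence the monodromy is complex and $\varphi$ is a simultaneous eigenvector of the commuting family $\{H^\mu_\gamma\}$, so for such $\mu$ the $T^2$--Darboux transforms are exactly the two given by the eigenlines $\mathcal E_\mu,\widetilde{\mathcal E}_\mu$, that is, the two points of $\Sigma_e$ over $\mu$.

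Next I would feed in the reality condition~\eqref{eq:reality}: right multiplication by $j$ sends a $\nabla^\mu$--parallel section to a $\nabla^{\bar\mu^{-1}}$--parallel one while preserving the line $\varphi\H=(\varphi j)\H$ and hence the transform. This is exactly the real structure $\rho$ on $\Sigma_e$ (covering $\mu\mapsto\bar\mu^{-1}$ and satisfying $\rho^*\mathcal E(p)=\mathcal E(p)j$), so two points of $\Sigma_e^\circ$ in one $\rho$--orbit determine the same $\hat f$, and the transforms arising from eigenlines are parametrised by $\Sigma_e^\circ/\rho$. Over the unit circle this is consistent with the local picture: $\rho$ fixes $\mu\in S^1$ and interchanges the two sheets, matching Theorem~\ref{thm:DT is CMC}, where for each $\mu=e^{i\theta}$ a single transform $g+\cot\tfrac\theta2$ arises independently of the chosen parallel section.

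The complex projective lines then account for the remaining transforms, which occur at the values $\mu_0\in\C_*\setminus S^1$ for which $H^{\mu_0}_\gamma$ is a scalar for every $\gamma$. At such $\mu_0$ every section of $P_{\mu_0}$ has scalar monodromy, and since distinct complex lines in $P_{\mu_0}$ yield distinct quaternionic lines, the whole $\P(P_{\mu_0})\cong\CP^1$ of transforms appears. These are genuinely new components: by Remark~\ref{rem:realpart0} all transforms for a fixed $\mu_0$ share one constant real part, and distinct $\rho$--orbits $\{\mu_0,\bar\mu_0^{-1}\}$ give distinct real parts, so the lines are distinct and pairwise disjoint; finiteness of the scalar locus follows from the algebraicity and finite genus of $\Sigma_e$ (Corollary~\ref{cor:genus}, \cite{Hitchin:90,klassiker}). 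Each such $\CP^1$ meets $\Sigma_e^\circ/\rho$ in the distinguished eigenlines furnished by the holomorphically extended bundle $\mathcal E$: one point when the two sheets share a common limit at $\mu_0$ (so $\mu_0$ is a branch point of $\Sigma_e$) and two points when they limit to distinct lines of $\P(P_{\mu_0})$. Since every $\mu$--Darboux transform comes from some $\mu\in\C_*$, these three cases are exhaustive.

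Finally, the ends $P_0,P_\infty$ recover $f$. Writing $\hat f=f+\hat T$ with $e\hat T\invers=\tfrac12\hat Ce$ and $\hat C=J(\hat a-1)+\hat b$ as in Lemma~\ref{lem: mu gives DT}, and combining this explicit prolongation with the limiting eigenlines $E(p)j$ and $E(p)$ of Theorem~\ref{thm:limiting}, one checks that $\hat T\to0$ and the transforms converge to $f$ along both ends, which identifies the pair $(P_0,P_\infty)$ with the original immersion. The hard part is the analysis at the central values $\mu_0$: proving that this scalar locus is finite, and deciding in each case whether the associated $\CP^1$ meets $\Sigma_e^\circ/\rho$ in one or two points. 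This requires tracking both eigenvalues and eigenline limits through the collision and distinguishing the scalar degeneration (which contributes a $\CP^1$) from the Jordan--block branch points of $\Sigma_e$ (which contribute only a single eigenline and no extra component).
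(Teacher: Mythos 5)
Your proposal follows essentially the same route as the paper's proof: parametrise transforms by parallel sections realising the holonomy eigenlines, use the reality condition \eqref{eq:reality} to identify $x$ with $\rho(x)$, prove injectivity by splitting a constant quaternionic factor $g=v+wj$ into pieces valued in the complementary subbundles $E=\im A^{(1,0)}$ and $Ej=\im A^{(0,1)}$, attach a $\CP^1$ at each non-unitary $\mu$ with central holonomy, distinguish the one-point and two-point intersections by whether $\Sigma_e^{\circ}$ is branched over $\mu$, and defer the statement about $(P_0,P_{\infty})$ to the limiting analysis. Your shortcut for unitary $\mu$ --- invoking Theorem~\ref{thm:DT is CMC} to see that only one transform arises there --- is sound and slightly cleaner than the paper's explicit observation that the two-dimensional eigenspace is the quaternionic line $\mathcal E_{\mu}\oplus\mathcal E_{\mu}j$.

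However, your argument for the distinctness of the projective lines is a genuine gap. You claim that distinct $\rho$--orbits $\{\mu_0,\bar\mu_0^{-1}\}$ give distinct real parts, citing Remark~\ref{rem:realpart0}; but that remark also shows that \emph{every} $\mu$--Darboux transform with $\mu\in\R_*$ has vanishing real part, so any two central-holonomy values on the real axis (exactly the situation of the resonance points $\mu_k\in\R$ of the cylinder in the appendix) produce families of transforms all sharing real part zero. The real part simply cannot separate the lines, and nothing in the paper suggests the scalar locus avoids $\R_*$. The correct argument is the one you already deployed in your first paragraph, and it is what the paper uses: if a single transform arose from parallel sections at $\mu_1$ and $\mu_2$, the prolongations would differ by a \emph{constant} quaternion $g=v+wj$ (constancy forced by $\pi d\hat\varphi=0$), and expanding $0=\nabla^{\mu_2}(\varphi^{\mu_1}g)$ into its four terms valued separately in $E$ and $Ej$ forces each term to vanish, whence $\mu_2\in\{\mu_1,\bar\mu_1^{-1}\}$; this single computation yields injectivity on $\Sigma^{\circ}_e/\rho$ \emph{and} disjointness of the $\CP^1$'s at distinct pairs. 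Two further points are weaker than you acknowledge: you defer finiteness of the scalar locus and the one-versus-two intersection count as ``the hard part,'' whereas the paper settles the former by citation to \cite{klassiker} and the latter by noting that for $\mu\notin S^1$ the limiting eigenline satisfies $\mathcal E_{\mu}j\notin W$, so the branched/unbranched dichotomy you sketch does close the argument once this is observed. And ``one checks that $\hat T\to 0$'' at the ends is not routine: since the parallel sections limit into $E$, the leading term of $J(\hat a-1)+\hat b$ cancels there, and one needs the next-order information $A^{(1,0)}\varphi_1\neq 0$ of Theorem~\ref{theorem:extends to f} (via Lemma~\ref{lemma:Psi} and Lemma~\ref{Lemma:determinant}) to conclude --- like the paper, you should defer to that result rather than assert the limit.
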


\begin {proof}
  \begin {comment} {\bf $\mu$-parallel sections in $E$ are zero, hence
      never get $f$ on open eigenline curve as DT;}
\end {comment}
For each $ x  \in\Sigma _ e ^ \circ $, choose a $\nabla^{\mu(x)}
$-parallel section $\varphi ^ x $ satisfying $\varphi ^ x (p)\C = {\mathcal {E}} _x
(p) $ for $p\in T^2$, and let $\widehat\varphi ^ x $ be the prolongation
\eqref{eq:prolongation} of $\varphi ^ x $.  The map $\hat f ^ x
=\widehat\varphi ^ x\H \colon T^2 \to S^4 $ is by definition a $\mu
$-Darboux transform of $f$. If the holonomy $H ^\mu_\gamma $ has
distinct eigenspaces, then $x $ clearly uniquely determines $\hat f ^
x $. From
 \cite{klassiker} we  know that there are only finitely many $ x  \in\Sigma _ e ^ \circ $
such that the holonomy has a two-dimensional eigenspace and hence
 we may extend the map $x\mapsto \hat f ^ x $ to such points.  By
(\ref {eq:reality}), the section $\varphi ^ x j$ is parallel for
$\nabla^{\mu(\rho(x))}$, so the points $x$ and $\rho(x)$ give rise to
the same $\mu$--Darboux transform and we obtain a well--defined map
\begin{eqnarray*}
  \Sigma^{\circ}_e / \rho & \rightarrow & \{\text{$\mu$-Darboux transforms $\hat{f}\colon T^2\to \R^4$ of $f$}\}\\
x &\mapsto &\hat f ^ x =\hat\varphi ^ x\H\,.
\end{eqnarray*}
It is proven in \cite {Hitchin:90} that $\rho $ acts without fixed
points. In Theorem~\ref{theorem:extends to f} we prove that the
points $ P _ 0 $ and $ P _\infty $ correspond to the original
immersion $ f $.

Suppose that $\hat f ^ {x_1} =\hat f ^ {x_2 } $, i.e. $\hat\varphi ^
  {\mu_2} =\hat\varphi ^ {\mu_1} g $ for $g:\R^2\rightarrow\H_*
  $. Then
\[
\pi d\hat\varphi ^ {\mu_2} = (\pi d\hat\varphi ^ {\mu_1})g +\pi\hat\varphi ^ {\mu_1} dg
\]
so from \eqref{eq:prolongation} we see that $g = v + w j $
 is constant. Thus
\begin{eqnarray*}
0 & = &\nabla ^ {\mu_2} (\varphi_1 g) =(\nabla ^ {\mu_2} \varphi_1) v + (\nabla ^ {\bar\mu_2^ {- 1}}\varphi_1) w j\\
& = & (\mu_2 -\mu_1) A ^ {(1, 0)}\varphi_1v + (\bar\mu_2^ {- 1} -\mu_1) A ^ {(1, 0)}\varphi_1wj + (\mu_2 ^ {- 1} -\mu_1^ {- 1}) A ^ {(0, 1)}\varphi_1v \\
& & + (\bar\mu_2 -\mu_1^ {- 1}) A ^ {(0, 1)}\varphi_1wj,
\end{eqnarray*}
where the first and last terms take values in $E = \im A ^ {(1,
  0)} $, and the remaining terms are valued in $Ej = \im A ^
{(0, 1)} $. Since the $(1, 0) $ and $(0, 1) $ parts each separately
vanish, each of the four terms above is zero and so $x_2 $ is either
$x_1 $ or $\rho (x_1) $.

Suppose that $\mu\in S^1$ is such that the holonomy matrix $ H ^{\mu}
 _\gamma (p) = H ^{\rho (\mu)} _\gamma (p) $ has a two-dimensional
 eigenspace.  The connection $\nabla^{\mu}$ has ${\bf SU}(2)$ holonomy and
 the limiting lines $ \mathcal E_{\mu} =\lim_{\eta\to\mu} \mathcal
 E_\eta $ and $\widetilde { \mathcal E} _{\mu}
 =\lim_{\eta\to\mu}\widetilde { \mathcal E} _\eta $ satisfy
 $\widetilde { \mathcal E} _{\mu}= \mathcal E_{\mu} j$. In particular
 $\mathcal E _\mu $ and $\mathcal E _\mu j $ span the two-dimensional
 eigenspace and each point in this eigenspace yields the same $\mu
 $-Darboux transform, corresponding to the point in $\Sigma _ e/\rho $
 with this $\mu $-value.

 We turn our attention then to the finitely many $\mu\in\C_*\setminus S^1$ for which the
 holonomy matrices $H ^{\mu} _\gamma (p)$ and $ H ^{\rho (\mu)}
 _\gamma (p) $ each have two-dimensional eigenspaces.
  In this case, writing $\mathcal E _\mu $ again for the limiting
 eigenline, $\mathcal E _\mu j $ does not belong to the
 two-dimensional eigenspace $W $ of $ H ^\mu $.  Hence the
 $\mu$--Darboux transforms associated to the pair $ (\mu,\rho (\mu)) $
 are parameterised by $\P (W)\simeq\CP ^ 1 $.  If $\Sigma_e^{\circ}$ is
 branched over $\mu$ there is exactly one $\mu$--Darboux transform
 given by the pair $ (x,\rho (x))\in\Sigma_e^\circ$ with $\mu(x)=\mu$, and
 the $\CP^1$ intersects $\Sigma_e^\circ/\rho$ in a single point.  When
 $\Sigma_e^\circ$ is unbranched over $\mu\in\C_*\setminus S^1 $, we have two
 limiting eigenlines ${\mathcal {E}} _{\mu}$ and $\tilde {\mathcal
   {E}} _{\mu} \not= {\mathcal {E}} _{\mu} j$ and the $\CP^1$
 intersects $\Sigma_e^\circ/\rho$ in two points.
 \end{proof}

\section {The multiplier spectral curve}

We introduce a one-dimensional analytic variety, called the {\it
  multiplier spectral curve} \cite{conformal_tori}, which has the
advantage that it may be defined for any conformal immersion of a
2-torus into $ S ^ 4 $ with degree zero normal bundle. In general this
variety may have infinite geometric genus, but we show that in the
case of a constant mean curvature torus in $\R^3$ its
geometric genus is finite. Indeed, its normalisation completes to a compact
Riemann surface biholomorphic to the normalisation of the \eigenline
curve.  The
multiplier curve and the \eigenline curve are not in general isomorphic since the multiplier curve
is always singular (Corollary~\ref{cor:singular}).

As $\pi _ 1 (T ^ 2) $ is abelian the holonomy of a section of
$\widetilde {V/L} $ (the pullback of $ V/L $ to the universal cover  $\C $ of $ T ^ 2 $)
lies in an abelian subgroup of $\H_*$. We assume that we have
conjugated so that this subgroup is equal to $\C _*$.
\begin {definition}
  The {\it multiplier spectral curve} $\Sigma_m$ of a conformal
  immersion $ f: T ^ 2\rightarrow S ^ 4\simeq \HP^1 $ is the set of
  holonomies realised by holomorphic sections of $\widetilde {V/L}
  $. Denote by $ H^0_h(\widetilde{V/L}) $ the space of holomorphic
  sections $\varphi$ with multiplier $h\in \Hom(\pi _ 1 (T ^ 2),\C_*)$, that is
  $\gamma^*\varphi = \varphi h_\gamma$ for all $\gamma\in\pi_1(T^2)$.
Then
\begin{eqnarray*}
\Sigma_m &=& \{ h\in \Hom(\pi _ 1 (T ^ 2),\C_*) | \text{ there exists } 0\not\equiv\varphi
\in H^0_h(\widetilde{V/L}) \}\,.
\end{eqnarray*}
\end {definition}
Let $\Harm (T ^ 2,\C) $ be the space of harmonic 1-forms on $ T ^ 2 $, then there is a natural isomorphism
\begin{eqnarray*}
\Harm(T ^ 2,\C) / \Gamma^* &\rightarrow & \Hom(\pi _ 1 (T ^ 2),\C_*)\\
\omega &\mapsto &\exp{\int\omega}
\end{eqnarray*}
where $\Gamma ^*$ denotes the harmonic 1-forms with periods in $2\pi i \Z$. The lift
$\log\Sigma _ m $ of the multiplier spectral curve to $\Harm(T ^ 2,\C)
$ is the space on which a holomorphic family of
elliptic operators has nontrivial kernel \cite {conformal_tori, ana}.  One
concludes that $\log\Sigma _ m $ and hence $\Sigma _ m =\log\Sigma
_ m /\Gamma ^* $ are one-dimensional analytic varieties, justifying
our terminology.  Notice that these curves possess a real structure $\sigma$
given by complex conjugation of the holonomy.

The multiplier curve allows us to give a geometric description of the space of  Darboux transforms.
\begin {theorem}\label {thm:spectral Darboux} For an immersed torus $f: T^2\to \R^3$ of constant
  mean curvature the set of all  Darboux transforms of $f$ is
  parameterised by the quotient of the multiplier spectral curve under the
  real structure $\sigma$ together with at most countably many complex and quaternionic projective spaces:
\[
\{ \text{Darboux transforms}\} =\Sigma_m/\sigma \cup \bigcup _ {m = 1} ^ {\infty} \CP^{k_m}
\cup\bigcup _ {n = 1} ^ {\infty }\HP^{l_n}.
\]
\end{theorem}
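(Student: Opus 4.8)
The plan is to translate the classification of Darboux transforms into a classification of holomorphic sections of $\widetilde{V/L}$ carrying monodromy, organised by the conjugacy type of the monodromy representation. First I would recall, from Lemma~\ref{lem:prolongation} and the definition of a Darboux transform, that every Darboux transform $\hat f\colon T^2\to S^4$ of $f$ is of the form $\hat f=\hat\varphi\H$ for a nontrivial holomorphic section $\varphi\in H^0(\widetilde{V/L})$ with monodromy $h\colon\pi_1(T^2)\to\H_*$, and that two such sections $\varphi,\varphi'$ define the same Darboux transform exactly when $\varphi'=\varphi q$ for a constant $q\in\H_*$. The constancy of $q$ is the computation already used in the proof of Theorem~\ref{thm:parametrized mu Darboux}: from $\pi d\hat\varphi'=0=\pi d\hat\varphi$ and $\hat\varphi'=\hat\varphi q$ one gets $\varphi\,dq=0$, and since $\varphi$ is not identically zero this forces $dq=0$. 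Thus the set of Darboux transforms is the quotient of the set of holomorphic sections with monodromy by the right action $\varphi\mapsto\varphi q$ of $\H_*$.

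Next I would use that $\pi_1(T^2)$ is abelian, so the image of $h$ is an abelian subgroup of $\H_*$ and hence lies either in the centre $\R_*$ or in a conjugate $q\C_*q^{-1}$ of the standard $\C_*=\R[i]_*$. In the non-central case, replacing $\varphi$ by $\varphi q$ — which does not alter $\hat f$ — we may assume $h\in\Hom(\pi_1(T^2),\C_*)$, so that $h\in\Sigma_m$; here the reparametrised multiplier is $q\invers h q$, which lands in $\C_*$ precisely because $h$ was valued in $q\C_*q\invers$. The Darboux transforms with a fixed such multiplier are then parameterised by the complex projectivisation $\P_\C(H^0_h(\widetilde{V/L}))$. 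Right multiplication by $j$ satisfies $D(\varphi j)=(D\varphi)j$ and carries a section of multiplier $h$ to one of multiplier $\bar h$, inducing a bijection $\P_\C(H^0_h)\cong\P_\C(H^0_{\bar h})$ that identifies the corresponding transforms; hence the multiplier is only determined up to the real structure $\sigma$. At a generic $h$ one has $\dim_\C H^0_h=1$, giving a single Darboux transform, and these assemble into $\Sigma_m/\sigma$; at the points where $\dim_\C H^0_h=k_m+1>1$ one obtains instead a complex projective space $\CP^{k_m}$ meeting $\Sigma_m/\sigma$ in the point $h$.

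In the central case $h$ takes values in $\R_*$, so $\bar h=h$ is fixed by $\sigma$, and since real multipliers are central the space $H^0_h(\widetilde{V/L})$ is closed under right multiplication by all of $\H_*$, i.e. it is a right $\H$-vector space. The associated Darboux transforms are therefore parameterised by the quaternionic projectivisation $\P_\H(H^0_h)=\HP^{l_n}$ with $l_n+1=\dim_\H H^0_h$. When $\dim_\H H^0_h=1$ this reduces to a single point, which coincides with the $\sigma$-fixed point $h\in\Sigma_m/\sigma$ (even though $\dim_\C H^0_h=2$, the complex directions collapse under the full $\H_*$-action); the remaining cases $\dim_\H H^0_h=l_n+1>1$ contribute the extra quaternionic projective spaces. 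Note that a non-real multiplier can never produce a quaternionic fibre, since $H^0_h\, j=H^0_{\bar h}\neq H^0_h$, so the complex and quaternionic contributions are genuinely separated.

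Finally I would deduce the countability of the exceptional loci from the analytic description of $\log\Sigma_m$: by \cite{conformal_tori,ana} it is the vanishing locus of a holomorphic family of elliptic (Fredholm) operators on the one-dimensional variety $\Sigma_m$, so the sets on which $\dim\ker$ jumps are proper analytic subsets, hence discrete and at most countable, yielding the two countable unions. Assembling the three contributions gives the stated decomposition. The main obstacle is the bookkeeping at the interface between the complex and quaternionic projectivisations — verifying that non-real multipliers yield complex projective fibres while real multipliers yield quaternionic ones, and confirming that the generic points of \emph{both} types are already carried by the single curve $\Sigma_m/\sigma$, so that only the genuine jumps in kernel dimension are recorded as the additional $\CP^{k_m}$ and $\HP^{l_n}$.
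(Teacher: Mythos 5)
Your proposal is correct and takes essentially the same approach as the paper: parameterise Darboux transforms by holomorphic sections with multiplier modulo constant quaternionic rescaling, invoke the generic one--dimensionality of $H^0_h(\widetilde{V/L})$ from \cite{conformal_tori}, use right multiplication by $j$ to pass to the quotient by $\sigma$, and distinguish complex projectivisations at non--real multipliers ($\CP^{k_m}$) from quaternionic ones at real multipliers ($\HP^{l_n}$). The only differences are in level of detail --- you make explicit the constancy of the scaling quaternion, the conjugation of the abelian holonomy into $\C_*$, and the countability of the exceptional set, all of which the paper's proof leaves implicit or handles elsewhere.
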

\begin {proof} A section of $\widetilde {V/L} $ is holomorphic when it
  lies in the kernel of the quaternionic holomorphic structure $D$ (introduced in \eqref{eq:holomorphic structure}) with multiplier $ h $. For all but a
  discrete set of $ h \in\Sigma_m$, the space $H^0_h(\widetilde{V/L})$
  is only complex one-dimensional \cite[Theorem
  3.3]{conformal_tori}.  The Darboux transform given by a holomorphic
  section $\varphi $ of $\widetilde {V/L} $ is unchanged by
  quaternionic scaling of this section, and if $\varphi $ has complex
  multiplier $ h $, then $\varphi j $ has multiplier $\bar h =\sigma (h)
  $. Thus away from a discrete set, to each pair $
  (h,\sigma(h))\in\Sigma_m/\sigma $ there corresponds a unique Darboux
  transform.

  If $H^0_h(\widetilde {V/L}) $ has complex dimension $k +1 $, then
  the same is true also for $\sigma (h) =\bar h $ since multiplying by $
  j $ interchanges the two spaces of sections.  Thus if $ h\not \in\R
  $, the space of Darboux transforms with multiplier $h $ or $\sigma(h) $
  is parameterised by $\CP ^k
  $. 
  If $ h\in\R
  $ the space $H^0_h(\widetilde {V/L})$ is a quaternionic vector
  space and thus the set of corresponding Darboux transforms is
  parameterised by $\HP ^ {l} $ with $l=\frac{k-1}2$.
\end {proof}

The eigenvalues of $ H ^\mu _\gamma (p) $ give a well-defined
holomorphic function $ h _\gamma (x) $ on $\Sigma ^\circ _{e}$, and we
write
\[
\begin {array} {rccl}
 h  \colon &\Sigma ^\circ _{e}&\rightarrow &\Sigma_m\\
& x &\mapsto & (h \colon \gamma\mapsto h_\gamma (x))\,.\end {array}
\]

Let $\widetilde\Sigma_e $ denote the normalisation of the \eigenline
spectral curve $\Sigma_e $ and note as before that any
singularities of $\Sigma_e $ are contained in $\Sigma ^\circ _e $. We
write $\widetilde\Sigma_m $ and $\log\widetilde\Sigma _ m $ for the
(analytic) normalisations of the multiplier spectral curve $\Sigma _ m
$ and of $\log\Sigma _ m $,  and observe that
$
\log\widetilde\Sigma _ m/\Gamma ^* = \widetilde\Sigma _ m$.
 Let
 \[
\tilde h
:\widetilde\Sigma ^\circ _e \rightarrow\widetilde\Sigma_m
\]
be the lifting of $ h $ to the normalisations.

\begin {theorem}\label{thm:isomorphism}
  The multiplier curve of a constant mean curvature torus is
  connected, and its normalisation can be completed to a compact
  Riemann surface biholomorphic to the normalisation of the \eigenline
  spectral curve. This biholomorphism is given by (an extension of)
  the map $\tilde h $ defined above.
\end {theorem}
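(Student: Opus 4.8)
The plan is to show that $\tilde h$ is a holomorphic injective immersion out of the compact surface $\widetilde\Sigma_e$ which, after one fills in the images of $P_0$ and $P_\infty$, surjects onto the completed normalisation of $\Sigma_m$. Since $\widetilde\Sigma_e\cong Y/\tau$ is connected by Corollary~\ref{cor:genus} and \cite{Hitchin:90}, such a biholomorphism simultaneously yields the connectedness and finite geometric genus of the multiplier curve. First I would record that $\tilde h$ is holomorphic on $\widetilde\Sigma_e^\circ$: a point $x$ over $\mu$ gives a $\nabla^\mu$-parallel section $\varphi^x$ spanning $\mathcal E_x$, and since $(\nabla^\mu)''=D$ this $\varphi^x$ is a holomorphic section of $\widetilde{V/L}$ whose multiplier is the pair of holonomy eigenvalues $h_\gamma(x)$. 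These eigenvalues are holomorphic on $\Sigma_e^\circ$ by the very construction of the eigenline curve, so $x\mapsto(\gamma\mapsto h_\gamma(x))$ is holomorphic into $\Sigma_m$ and lifts to the normalisations.

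Next I would establish injectivity. For all but a discrete set of multipliers the space $H^0_h(\widetilde{V/L})$ is complex one-dimensional \cite{conformal_tori}; if $\tilde h(x_1)=\tilde h(x_2)=h$ then $\varphi^{x_1},\varphi^{x_2}$ are complex proportional, and the computation in the proof of Theorem~\ref{thm:parametrized mu Darboux} (separate vanishing of the $(1,0)$- and $(0,1)$-parts, valued in $E=\im A^{(1,0)}$ and $Ej$) forces $x_2\in\{x_1,\rho(x_1)\}$. Because $\rho$ intertwines with the real structure $\sigma$, the section $\varphi^x j$ realising the conjugate multiplier $\sigma(h)$, the points $x_1$ and $\rho(x_1)$ carry distinct multipliers when $h\notin\R$, giving injectivity there; along the real locus, where $x_1$ and $\rho(x_1)$ map to a single crossing point of two branches of $\Sigma_m$, the two preimages in $\widetilde\Sigma_m$ separate them, so $\tilde h$ stays injective. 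An injective holomorphic map into the smooth curve $\widetilde\Sigma_m$ is automatically an immersion (otherwise it is locally $z\mapsto z^k$ with $k\ge2$ and fails injectivity), hence a biholomorphism onto an open subset.

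I would then extend $\tilde h$ over $P_0,P_\infty$ and thereby compactify. Theorem~\ref{thm:limiting}(iii) gives, in the coordinate $\zeta$ with $\zeta^2=\mu$, the expansion $\log h_\gamma(\zeta)=\pm(w_{-1}\gamma\zeta+u_0(\gamma)+u_1(\gamma)\zeta^{-1}+\cdots)$ with $w_{-1}\neq0$; equivalently the harmonic form in $\log\Sigma_m$ representing $\tilde h(x)$ grows like $\zeta w_{-1}\,dz$. Thus each of $P_0,P_\infty$ corresponds to an end of $\widetilde\Sigma_m$ which, read off in the local coordinate $\zeta^{-1}$, is a punctured disc mapped biholomorphically by $\tilde h$; filling these punctures completes $\widetilde\Sigma_m$ to a compact Riemann surface $\widehat{\widetilde\Sigma_m}$, and $\tilde h$ extends holomorphically with $P_0,P_\infty$ as the added points. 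The first-order agreement of the eigenlines, Theorem~\ref{thm:limiting}(ii), guarantees the extension is unramified there.

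Finally, surjectivity, which I expect to be the main obstacle. At this stage $\tilde h\colon\widetilde\Sigma_e\to\widehat{\widetilde\Sigma_m}$ is an injective holomorphic immersion from a compact connected surface, so it remains only to show that every point of $\Sigma_m$ already lies in $h(\Sigma_e^\circ)$, i.e. that every holomorphic section of $\widetilde{V/L}$ with generic multiplier is $\nabla^\mu$-parallel for some $\mu$. This is exactly where the constant mean curvature hypothesis enters through the family $\nabla^\mu$ with $(\nabla^\mu)''=D$. The difficulty is that $D$ is the $J$-antiholomorphic part while the eigenline description uses the $I$-antiholomorphic parts $(\nabla^\mu)^{(0,1)}$, so a $D$-holomorphic section is not a priori parallel; I would resolve this by comparing the description of $\log\Sigma_m$ as the kernel locus of the holomorphic family of elliptic operators \cite{conformal_tori,ana} with the eigenvalue description, using harmonicity to show that for generic $h$ the one-dimensional space $H^0_h(\widetilde{V/L})$ is forced to be spanned by an eigensection of some holonomy $H^\mu$, hence by a $\nabla^\mu$-parallel section. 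This gives $\widetilde\Sigma_m\subseteq\tilde h(\widetilde\Sigma_e^\circ)$, so $\tilde h$ is the asserted biholomorphism and, $\widetilde\Sigma_e$ being connected, $\Sigma_m$ is connected of finite genus.
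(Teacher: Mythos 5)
Your first three steps (holomorphicity, injectivity away from a discrete set via the $\ker A^{(1,0)}\cap\ker A^{(0,1)}=\{0\}$ computation, and the extension over $P_0,P_\infty$ using the expansion of $\log h_\gamma(\zeta)$ from Theorem~\ref{thm:limiting}(iii)) follow the paper's proof closely and are essentially correct, modulo the hyperelliptic subtlety which the paper treats explicitly: when $\mu(x)=\mu(x^\#)$ with $x,x^\#$ exchanged by the hyperelliptic involution, the eigenvalues are reciprocal as well as equal, so $h=\pm 1$, and one--dimensionality of $H^0_{h}(\widetilde{V/L})$ forces $x=x^\#$ to be a branch point of $\mu$. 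Your appeal to the normalisation ``separating branches'' along the real locus is heuristic, but harmless since injectivity off a discrete set suffices for a holomorphic map between Riemann surfaces.

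The genuine gap is your final step. You correctly identify surjectivity as the main obstacle, but your proposed resolution --- using harmonicity to show that for generic $h$ the one--dimensional space $H^0_h(\widetilde{V/L})$ is spanned by a $\nabla^\mu$--parallel section --- is precisely the hard statement restated, with no mechanism supplied; it is essentially equivalent to the surjectivity you are trying to prove, and nothing in the families $\nabla^\mu$ obviously forces a $D$--holomorphic section with multiplier to be an eigensection of some holonomy (indeed the appendix exhibits holomorphic sections with multiplier that are parallel for no $\mu$, at multipliers where $\dim H^0_h$ jumps). Relatedly, you assert that filling the two punctures makes $\widetilde\Sigma_m$ compact, which is unjustified at that stage: a priori $\widetilde\Sigma_m$ could have further ends or infinite genus, as it does for general conformal tori. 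The paper avoids both problems with one topological ingredient you omit: by \cite{conformal_tori}, the multiplier curve of a conformal torus has either two components each with one end, or a single component with two ends. Granting this, the extended $\tilde h$ maps the \emph{compact connected} surface $\widetilde\Sigma_e$ injectively and holomorphically, so its image is open and closed and contains both ends; hence it exhausts every component, giving connectedness, surjectivity, and compactness of the completion simultaneously, with no analysis of which holomorphic sections are parallel. Without this structure theorem (or a genuine substitute argument for your ``generic sections are parallel'' claim), your proof does not close.
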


\begin {proof} 
  We show first that $\tilde h $ is injective.  It suffices to prove
  this away from a discrete set, as $\tilde h $ is a holomorphic map
  between two Riemann surfaces.  Write $\pi _e:\widetilde\Sigma _
  e\rightarrow\Sigma _ e $ for the normalisation map.  We show that
  $\tilde h $ is injective on
\[
U =\{x\in\widetilde\Sigma^\circ_{e} \colon \dim H ^ 0 _{ h   (\pi_e(x))} (\widetilde{V/L}) = 1\} \setminus\pi _e^ {- 1} (S_{e}\cup h   ^ {-1} (S_m)),
\]
where $S_{e} $, $ S_m $ are the singular points of the two
spectral curves. Since we are omitting these points, we do not need to
distinguish between $ h $ and $\tilde h $. As mentioned before, the set
$\{h\in\Sigma_m\colon \dim H^0_{h} (\widetilde{V/L}) > 1\} $ is
discrete \cite {conformal_tori}.  Its pre-image under the
holomorphic map $\tilde h $ is thus also discrete, and so $ U $ is the
complement of a discrete set.

Each $ x \in U $ determines a unique $\nabla^{\mu(x)} $--parallel section
$\varphi ^x\in H^0_{ h (x)} (V/L) $ up to complex scaling. If
\begin{equation}
\label{eq:equal_h}
h   ( x) = h   (x ^ {\#})
\end{equation}
then $\varphi ^x =\varphi^ {x ^\#} \lambda $ with $\lambda\in\C_*$ since
$\dim H^0_{h(x)}(\widetilde{V/L}) =1$. In particular $\varphi^x$ is
parallel with respect to both $\nabla ^{\mu(x)} $ and $\nabla ^ {\mu(x^ \#)}
$. Therefore \eqref{eq:nablaIdec} and the fact that $\ker A^{(1,0)}\cap \ker A^{(0,1)}=\{0\}$ imply that
$\mu (x^ \#) =\mu(x) $. If $x$ and $x^ \#$ are exchanged by the hyperelliptic involution then by \eqref{eq:equal_h} we see that $h   ( x) = h   (x ^ {\#})=\pm 1$.  Since $\dim H^0_{h(x)}(\widetilde{V/L}) =1$ we conclude that $x=x^\#$ is a branch point of $\mu$ and so $\tilde{h}$ is injective.

We now show that the map $\tilde h$ holomorphically extends to $P_0 =
 \mu\invers(0)$ after completing one end of the multiplier spectral curve
 by a single point.  It suffices to extend $\tilde h_\gamma$ for an
 appropriate $\gamma\in\Gamma$. In this proof we will not notationally
 distinguish between $h_\gamma$ and $h$.  The eigenline spectral curve
 $\Sigma_e$ is branched at $P_0$ hence $\zeta$ with $\zeta^2 = \mu$ is
 a local coordinate near $P_0$.  Theorem~\ref {thm:limiting} shows that $\log h(\zeta)$ is a chart around the end of the
multiplier spectral curve which is completed by adding the point
$ 0 $ in this chart. An analogous argument can be used at
$P_\infty$ and thus the normalised multiplier spectral curve becomes a compact
Riemann surface $\overline{\widetilde\Sigma}_m$ by adding these two
points. Moreover, $\tilde h\colon \widetilde\Sigma_e \to
\overline{\widetilde\Sigma}_m$ extends to an injective holomorphic
map.  For a general conformal branched immersion $ T ^ 2\rightarrow S
^ 4 $, the multiplier spectral curve $\Sigma _ m $ consists either of
two components each of which has one end, or of a single component
with two ends \cite {conformal_tori}. Since both ends of $\widetilde
\Sigma_m$ are contained in the image of $\tilde h$ the multiplier
spectral curve of a constant mean curvature torus is connected and
$\tilde h $ is a biholomorphism.
\end {proof}

The space $H^0_h(\widetilde{V/L})$ of holomorphic sections with
multiplier $h$ is generically complex one-dimensional. It defines a
line bundle $\mathcal L\to \widetilde\Sigma_m$, the {\em kernel bundle}, on the
normalisation of the multiplier spectral curve \cite[Theorem
3.6]{conformal_tori} with fibres $\mathcal{L}_{\tilde h} =\varphi\C$ where $\varphi \in H^0_h(\widetilde{V/L})$
and $h$ denotes the image of $\tilde{h}\in\widetilde\Sigma_m$ under the normalisation map $\pi_m\colon \widetilde\Sigma_m\to \Sigma_m$.
By Theorem~\ref{thm:isomorphism} each point in $\widetilde\Sigma_m$ is $\tilde{h}(x)$ for a $x\in\widetilde\Sigma_e$, where $\tilde{h}$ denotes the biholomorphism of the Theorem. Hence $\varphi$ is parallel with respect to $\nabla^{\mu(x)}$ and therefore has no zeros.  This enables us to define for each $p\in T^2$ a holomorphic line  bundle ${\mathcal L}(p)$ where  $\mathcal L_{\tilde h}(p)=\varphi(p) \C$.

\begin{corollary}
Let $f\colon T^2\to \R^3$ be a constant mean curvature torus.  For each $p\in T^2$ the push forward of the eigenline bundle ${\mathcal E}(p)$ under the biholomorphism $\tilde{h}$ is the evaluation ${\mathcal L}(p)$ of the kernel bundle
  \[
\tilde h_* {\mathcal E}(p)
={\mathcal L}(p)\,.
\]
Consequently, every Darboux transform $\hat f $ of
 $ f $ given by a holomorphic
  section in $\mathcal L_{\tilde h}$ with $\tilde h\in\widetilde \Sigma_m$  is a $\mu $-Darboux transform of $f$.

\end{corollary}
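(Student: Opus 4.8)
The plan is to compare the two line bundles fibrewise through the biholomorphism $\tilde h$ of Theorem~\ref{thm:isomorphism}, using the fact -- recorded in the discussion preceding this corollary -- that the section spanning the kernel bundle fibre is parallel. I would first fix $p\in T^2$ and let $U\subset\widetilde\Sigma_e$ be the complement of the discrete set consisting of $P_0,P_\infty$, the branch points of $\mu$, the preimages of the singular points of either spectral curve, and the points $x$ with $\dim H^0_{h(x)}(\widetilde{V/L})>1$ (the last being discrete by \cite{conformal_tori}). For $x\in U$ the kernel bundle fibre is $\mathcal L_{\tilde h(x)}=\varphi^x\C$, where $\varphi^x$ is the unique (up to scale) $\nabla^{\mu(x)}$--parallel section of multiplier $h(x)$; such a $\varphi^x$ exists because, as observed before the corollary, every point of $\widetilde\Sigma_m$ is $\tilde h(x)$ and the spanning section of the kernel bundle is parallel.

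The key step is the fibrewise comparison. Since $\varphi^x$ is $\nabla^{\mu(x)}$--parallel with $\gamma^*\varphi^x=\varphi^x\,h_\gamma(x)$, its value $\varphi^x(p)$ is an eigenvector of the holonomy $H^{\mu(x)}_\gamma(p)$ for the eigenvalue $h_\gamma(x)$ and hence lies in the eigenline $\mathcal E_x(p)$; as both lines are one--dimensional over $U$ they coincide. Therefore
\[
\mathcal L(p)_{\tilde h(x)}=\varphi^x(p)\C=\mathcal E_x(p)=\bigl(\tilde h_*\mathcal E(p)\bigr)_{\tilde h(x)}\,,
\]
so that $\tilde h_*\mathcal E(p)$ and $\mathcal L(p)$ are holomorphic sub--line bundles of the trivial $(V/L)_p$--bundle over $\overline{\widetilde\Sigma}_m$ agreeing on the dense set $\tilde h(U)$. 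Two such sub--bundles agreeing on a set with a limit point agree everywhere by the identity theorem, yielding $\tilde h_*\mathcal E(p)=\mathcal L(p)$.

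For the consequence, suppose $\hat f$ is a Darboux transform determined by a holomorphic section of $\mathcal L_{\tilde h}$ for some $\tilde h\in\widetilde\Sigma_m$. By Theorem~\ref{thm:isomorphism} we have $\tilde h=\tilde h(x)$ for some $x\in\widetilde\Sigma_e$, and the spanning section of $\mathcal L_{\tilde h(x)}$ is, up to a complex scale, the $\nabla^{\mu(x)}$--parallel section $\varphi^x$. Its prolongation \eqref{eq:prolongation} is by definition a $\mu$--Darboux transform, so $\hat f$ is a $\mu$--Darboux transform of $f$. I expect the only genuine subtlety to be the bookkeeping at the special points excluded from $U$ -- branch points of $\mu$, the ends $P_0,P_\infty$, and the points with $\dim H^0>1$ -- where the parallel section need not be unique and the equality of bundles is obtained not by a direct fibrewise check but by the identity/continuity argument extending it from the dense set $\tilde h(U)$.
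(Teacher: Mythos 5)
Your proposal is correct and takes essentially the same approach as the paper, which states this corollary without a separate proof precisely because it follows from the preceding discussion: each fibre $\mathcal L_{\tilde h(x)}$ is spanned by a $\nabla^{\mu(x)}$--parallel section via the biholomorphism of Theorem~\ref{thm:isomorphism}, so the eigenline and kernel bundles agree away from a discrete set and hence everywhere by holomorphicity, and any section of $\mathcal L_{\tilde h}$ is parallel up to scale, making its prolongation a $\mu$--Darboux transform by definition. Your fibrewise eigenvector comparison on the dense open set $U$ and the identity--theorem extension simply make explicit what the paper leaves implicit.
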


As we have seen the normalisations of the \eigenline and the
multiplier spectral curve are biholomorphic.   However the
two spectral curves are not in general isomorphic.
\begin {corollary}
\label{cor:singular}
  The multiplier spectral curve of a constant mean curvature torus is
  always singular.  The eigenline spectral curve is a partial
  desingular\-isation of the multiplier spectral curve.
\end {corollary}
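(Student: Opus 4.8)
The plan is to deduce both assertions directly from the biholomorphism $\tilde h\colon \widetilde\Sigma_e\to\widetilde\Sigma_m$ of Theorem~\ref{thm:isomorphism} together with the interplay of the two real structures. The key observation is that $\tilde h$ intertwines them. Indeed, for $x\in\Sigma_e^\circ$ over $\mu$ the section $\varphi^x j$ is parallel for $\nabla^{\bar\mu^{-1}}$ by \eqref{eq:reality} and carries the conjugated multiplier, so $h(\rho(x))=\overline{h(x)}=\sigma(h(x))$; passing to normalisations gives $\tilde h\circ\rho=\sigma\circ\tilde h$. In particular $\rho$ covers $\mu\mapsto\bar\mu^{-1}$ on the base, and since $\rho$ is fixed point free \cite{Hitchin:90}, the real structure $\sigma$ is fixed point free on the smooth curve $\widetilde\Sigma_m$. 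The singularity of $\Sigma_m$ will then be forced by the fact that $\Sigma_m$ nonetheless carries $\sigma$--fixed (real) points.

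First I would locate such a point explicitly, namely the trivial multiplier $h\equiv 1$. Because $\rho$ covers $\mu\mapsto\bar\mu^{-1}$, it preserves the fibre of $\Sigma_e$ over $\mu=1$; were this fibre a single branch point, $\rho$ would fix it, contradicting fixed point freeness. Hence $\Sigma_e$ is unbranched over $\mu=1$ and there are two distinct points $x_\pm\in\Sigma_e^\circ$ lying over it. Since $\nabla^1=\nabla$ has trivial holonomy, both eigenvalue functions are identically $1$, so $h(x_+)=h(x_-)=1$; moreover $1\in\Sigma_m$ genuinely, as the global $\nabla$--parallel section $e$ of $V/L=\Li$ is a monodromy--free holomorphic section with trivial multiplier. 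By injectivity of $\tilde h$ the points $x_\pm$ map to two distinct points of $\widetilde\Sigma_m$, so the normalisation $\pi_m\colon\widetilde\Sigma_m\to\Sigma_m$ is at least two--to--one over $1$ and $\Sigma_m$ is singular there. Equivalently and more robustly: $\pi_m^{-1}(1)$ is a finite $\sigma$--invariant set, and since $\sigma$ has no fixed point on $\widetilde\Sigma_m$ it cannot be a single point, so $1$ is singular.

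For the second assertion I would exhibit $\Sigma_e$ as an intermediate curve. The lifting relation $\pi_m\circ\tilde h=h\circ\pi_e$ (with $\pi_e\colon\widetilde\Sigma_e\to\Sigma_e$ the normalisation) shows that the normalisation of $\Sigma_m$ factors as
\[
\widetilde\Sigma_m\ \xrightarrow{\ \tilde h^{-1}\ }\ \widetilde\Sigma_e\ \xrightarrow{\ \pi_e\ }\ \Sigma_e\ \xrightarrow{\ h\ }\ \Sigma_m .
\]
Thus $h\colon\Sigma_e\to\Sigma_m$ is a birational morphism whose composite with $\pi_e$ is precisely $\pi_m$. Since $\Sigma_e$ is generically smooth and in particular smooth near $P_0,P_\infty$, whereas $\Sigma_m$ is singular by the previous step, $\Sigma_e$ sits strictly between $\Sigma_m$ and its normalisation and is therefore a partial desingularisation of $\Sigma_m$.

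The main obstacle is the singularity argument rather than the factorisation: one must ensure that the chosen real multiplier is genuinely attained by two independent global holomorphic sections, equivalently that $\Sigma_e$ is unbranched over $\mu=1$, and that these two points stay distinct after lifting to $\widetilde\Sigma_m$. The real--structure bookkeeping $\tilde h\circ\rho=\sigma\circ\tilde h$ together with the fixed point freeness of $\rho$ is exactly what makes this robust, showing that every $\sigma$--fixed point of $\Sigma_m$ must be singular. The remaining point, that $h$ is birational and not merely finite, is immediate from the injectivity of $\tilde h$ established in Theorem~\ref{thm:isomorphism}.
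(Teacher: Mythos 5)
Your argument is correct, and for the first (and harder) assertion it takes a genuinely different route from the paper. The paper's proof picks a pair of ramification points $x,\rho(x)$ of the $\mu$--projection lying over some $\mu,\bar\mu^{-1}\in\C_*\setminus S^1$ (their existence is simply asserted), observes that at a ramification point the two eigenvalues of the holonomy coincide and hence equal $\pm 1$ for each generator, so the multiplier $h(x)$ is real and $h(\rho(x))=\overline{h(x)}=h(x)$; since $x\neq\rho(x)$ this shows $h$ itself is not injective on $\Sigma_e^\circ$, and the factorisation $\pi_m=h\circ\pi_e$ then yields both claims. You instead work over the single value $\mu=1$, where $\nabla^1=\nabla$ has trivial holonomy and the multiplier is the constant representation $1$, and you derive the two distinct preimages from the intertwining $\tilde h\circ\rho=\sigma\circ\tilde h$ together with the fixed--point--freeness of $\rho$. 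This buys two things the paper's proof does not have: it avoids the unjustified existence of ramification points off the unit circle (your argument works even if $P$ were constant, i.e.\ for $y^2=\mu$), and it isolates the cleaner general principle that \emph{every} $\sigma$--fixed point of $\Sigma_m$ must be singular, because $\sigma$ acts freely on $\widetilde\Sigma_m$. Conversely, the paper's proof exhibits non--injectivity of $h$ on $\Sigma_e$ itself at the branch--point multipliers $\pm1$, which ties in with its closing remark about four--fold covers. One caution: your initial dichotomy (``single branch point or two distinct points over $\mu=1$'') overlooks the a priori possibility that $P$ has an even--order zero at $\mu=1$, in which case the fibre of the singular model $\Sigma_e$ over $\mu=1$ is one point fixed by $\rho$, while $\rho$ acts freely only on the branches, i.e.\ on the normalisation. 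Your ``more robust'' reformulation on $\widetilde\Sigma_m$ disposes of this correctly; alternatively, the paper's own observation in the proof of Theorem~\ref{thm:parametrized mu Darboux} that over unitary $\mu$ the two limiting eigenlines are $\mathcal E_\mu$ and $\mathcal E_\mu j\neq \mathcal E_\mu$ shows $P$ has no zeros on $S^1$ at all, so the fibre over $\mu=1$ genuinely consists of two smooth points and even your first formulation goes through. Your treatment of the second assertion, via $\pi_m=h\circ\pi_e$ composed with $\tilde h^{-1}$, is the same as the paper's, and is in fact slightly more careful about domains near $P_0,P_\infty$.
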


\begin {proof} The projection of $\Sigma _ e $ to the $ \mu $-plane
  has at least one pair $ x,\rho (x) $ of ramification points over
  some $\mu,\bar\mu^ {- 1}\in \C_*\setminus S ^ 1 $.  At these points,
  for each generator $\gamma\in\pi _1 (T^2,p)$ the eigenvalues of the holonomy matrix $ H
  ^\mu _\gamma (p)\in S L (2,\C) $ are either both $1$ or $-1$. Thus
  the representation $ h(x)\colon\pi _1 (T^2) \rightarrow\C_*$ is in particular real and so $h(\rho
    (x)) =\overline{ h(x)} = h(x)$, showing that the map $
  h\colon\Sigma ^\circ _ e\rightarrow\Sigma _ m $ is not
  one-to-one.
  Since $\Sigma _e $ is smooth at $P_0$ and $ P _\infty $ the normalisation maps give an extension
  $\pi _m\circ\pi _ e^ {- 1}$  of $h $ to all of $\Sigma _
  e $ .  We then have that the normalisation map for $\Sigma _ m$
  factors as $\pi _ m =h\circ\pi _e $ and that it is not one-to-one,
  proving our claims.  We note that if we replace the original torus
  by a four-fold cover then all ramification points other than $P_0, P
  _\infty $ map to the constant multiplier $1 $.
  \end {proof}

\section {Geometric Picture}
The spectral curves encode geometric information about the original
constant mean curvature torus $f:T^2\rightarrow\R ^ 3 $. In this
section we show that $f$ is the limit of $\mu$--Darboux transforms as
$\mu$ tends to  $0$ or $\infty$.

The eigenline bundle $\mathcal E(p) \to  \Sigma_e$ based at
$p\in T^2$ gives rise to a complex line bundle $\mathcal E \to T^2
\times \Sigma_e^\circ$ via parallel transport of the fiber
$\mathcal E(p)_x$ over $x\in \Sigma_e^\circ$ by the connection
$\nabla^{\mu(x)}$. Since $\nabla^{\mu(x)}$ has a simple pole at $P_0$
and at $P_\infty$, parallel transport is only defined on
$\Sigma_e^\circ =  \Sigma_e\setminus\{P_0,
P_\infty\}$.

Then $\mathcal E_x =\varphi^x\C$ where $\varphi^x$ is a
$\nabla^{\mu(x)}$--parallel section with multiplier $h(x)$. The
prolongation $\hat \varphi^x$ of $\varphi^x$ (see Lemma
\ref{lem:prolongation}) defines a complex line subbundle
\[
\hat{\mathcal E} \to T^2\times \Sigma_e^\circ
\]
of the
trivial $\H^2$ bundle $V$ which is holomorphic over
$\Sigma_e^\circ$. The Darboux transform for
$x\in\Sigma_e^\circ$ is the map $\hat f^x = \hat{\mathcal
  E}_x \H \colon T^2 \to S^4$.
\begin {theorem}
  \label{theorem:extends to f}
  Let $f\colon T^2\to\R^3$ be a constant mean curvature torus with corresponding quaternionic line subbundle $L\subset V$
  and $\delta\colon L\to K V/L$ be the derivative of $f$ as defined in \eqref{eq:delta}.
  For every $p\in T^2$ the   line
  bundle $\widehat{\mathcal E }(p)\to \Sigma_e^\circ$
  extends holomorphically across $P_0$ and $P_\infty$ with
\[
\lim_{x\to P_0 } \widehat{\mathcal E }_x = \delta^{-1}(Ej) \quad \text{ and } \quad \lim_{x\to P_\infty}
\widehat{\mathcal E}_x = \delta^{-1}(E)\,.
\]
Hence, when $x$ tends to $P_0$ or $P_\infty$, the Darboux transforms $\hat{f}^x$  limit to the original constant mean curvature torus $f$.
\end {theorem}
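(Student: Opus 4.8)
The plan is to work in the splitting $V=L\oplus\Li$ determined by the point at infinity and to track how the $L$-component of the prolongation grows as $x\to P_\infty$. Writing the prolongation of a $\nabla^{\mu(x)}$-parallel section $\varphi^x$ as $\hat\varphi^x=\varphi^x+\psi^x$ with $\psi^x\in\Gamma(\widetilde L)$, the prolongation condition \eqref{eq:prolongation} together with $\nabla^{\mu(x)}\varphi^x=0$ and \eqref{eq:nablaIdec} gives $\delta\psi^x=\big((\mu-1)A^{(1,0)}+(\mu\invers-1)A^{(0,1)}\big)\varphi^x$, where $\delta$ is invertible because $f$ is immersed (cf. \eqref{eq:A=delta}). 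Since the line $\widehat{\mathcal E}_x=\hat\varphi^x\C$ is unchanged under constant rescaling of $\varphi^x$, the scalar monodromy factor of the parallel section cancels in the line and I may compute instead with the holomorphic trivialising section $\varphi^x=\varphi_0+\zeta\invers\varphi_1+\cdots$ of Theorem~\ref{thm:limiting}, where $\zeta^2=\mu$. The whole statement then reduces to determining the leading $\zeta$-behaviour of $\psi^x$ relative to $\varphi^x$.

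The crux is that $P_\infty$ is a branch point of $\Sigma_e$ over the $\mu$-plane, so $\zeta$ is the honest local coordinate and the eigenline expands in $\zeta\invers$. Because $\varphi_0$ spans $E=\ker A^{(1,0)}$, the term $A^{(1,0)}\varphi^x=\zeta\invers A^{(1,0)}\varphi_1+O(\zeta^{-2})$ vanishes to first order; hence $(\mu-1)A^{(1,0)}\varphi^x=(\zeta^2-1)A^{(1,0)}\varphi^x=\zeta\,A^{(1,0)}\varphi_1+O(1)$ grows linearly in $\zeta$, whereas $(\mu\invers-1)A^{(0,1)}\varphi^x$ stays bounded. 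Therefore $\delta\psi^x=\zeta\,A^{(1,0)}\varphi_1+O(1)$ and $\psi^x=\zeta\,\delta\invers(A^{(1,0)}\varphi_1)+O(1)$. By Lemma~\ref{lemma:Psi} and \eqref{eq:eigenvector} one has $A^{(1,0)}\varphi_1=a_1\varphi_0\,dz$, a nowhere-vanishing multiple of $\varphi_0$, so the leading coefficient spans $\delta\invers(E)$.

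Rescaling by $\zeta\invers$, the section $\zeta\invers\hat\varphi^x=\zeta\invers\varphi^x+\zeta\invers\psi^x$ is holomorphic in $\zeta\invers$ near $P_\infty$ and converges to the nowhere-vanishing section $\delta\invers(A^{(1,0)}\varphi_1)$ of $\delta\invers(E)\subset L$; this simultaneously furnishes the holomorphic extension of $\widehat{\mathcal E}(p)$ across $P_\infty$ and the limit $\lim_{x\to P_\infty}\widehat{\mathcal E}_x=\delta\invers(E)$. The behaviour at $P_0$ follows from the reality condition \eqref{eq:reality}: if $\varphi^x$ is $\nabla^{\mu}$-parallel then $\varphi^x j$ is $\nabla^{\bar\mu\invers}$-parallel, and since the prolongation commutes with right multiplication by the constant quaternion $j$ one obtains $\widehat{\mathcal E}_{\rho(x)}=(\widehat{\mathcal E}_x)j$; letting $\rho(x)\to P_0$ and using that $\delta$ is quaternionic linear gives $\lim_{x\to P_0}\widehat{\mathcal E}_x=\delta\invers(E)j=\delta\invers(Ej)$. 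Finally, as $\delta\invers(E)$ and $\delta\invers(Ej)$ are complex lines inside the quaternionic line $L$, their quaternionic spans are all of $L$, so the Darboux transforms satisfy $\hat f^x=\widehat{\mathcal E}_x\H\to L=f$ at both points.

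The main obstacle is conceptual rather than computational: one must recognise that $P_0$ and $P_\infty$ are branch points, so that the correct local parameter is $\zeta=\sqrt\mu$ and the two eigenlines agree there only to first order (Theorem~\ref{thm:limiting}(ii)). It is precisely this first-order, as opposed to full, agreement that forces $(\mu-1)A^{(1,0)}\varphi^x$ to blow up like $\zeta$ and thereby rotates the transverse Darboux line $\widehat{\mathcal E}_x$ into $L$, recovering $f$. A naive power-series expansion in $\mu$ would instead suggest, incorrectly, that $\psi^x$ remains bounded and that $\widehat{\mathcal E}_x$ stays transverse to $L$; avoiding this trap by passing to $\zeta$ is the essential point.
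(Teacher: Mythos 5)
Your proposal is correct and follows essentially the same route as the paper's proof: both plug the expansion $\varphi^x=\varphi_0+\zeta^{-1}\varphi_1+\cdots$ of \eqref{eq:expansion} into the prolongation formula in the splitting $V=L\oplus\Li$, identify the leading term $\zeta\,\delta^{-1}\bigl(A^{(1,0)}\varphi_1\bigr)$, and use Lemma~\ref{lemma:Psi}, \eqref{eq:eigenvector} and Lemma~\ref{Lemma:determinant} to see $A^{(1,0)}\varphi_1=a_1\varphi_0\,dz$ is nowhere vanishing, so the rescaled line extends holomorphically and converges to $\delta^{-1}(E)$ at $P_\infty$, with $P_0$ handled via the reality condition \eqref{eq:reality}. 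Your extra justifications — that the scalar monodromy factor cancels in the line so the non-parallel trivialising family may be used, and the explicit $j$-equivariance of the prolongation — only make explicit what the paper leaves implicit, and your sign $\hat\varphi^x=\varphi^x+\delta^{-1}(\alpha^\mu\varphi^x)$ (consistent with \eqref{eq:Bmuc}, versus the minus sign printed in the paper's proof) is immaterial to the limiting line.
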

\begin {proof}
As $f$ is an immersion $\delta$ has no zeros, and
  from \eqref{eq:Bmuc} and \eqref{eq:nablaIdec} we see that
$\widehat{\varphi}^x = \varphi^x -
\delta\invers(\alpha^\mu\varphi^x)\,, $ where
\[
\alpha^\mu = \nabla^\mu -\nabla = \mu A^{(1,0)} + \mu\invers
A^{(0,1)} - A\,.
\]
From \eqref{eq:expansion} we have
\[
\varphi^x = \varphi_0 + \varphi_1 \zeta\invers + \ldots
\]
with $\varphi ^ x\C = \mathcal E _ x $. From Theorem~\ref {thm:limiting}  we know that
$\varphi_0$ is a nowhere vanishing section of $E=\ker A^ {(1, 0)} $.
Then
\begin {align*}
\widehat\varphi ^x & = (\varphi _0+\varphi _1\zeta ^ {- 1} +\cdots - \delta^ {- 1} (\zeta ^ 2 A ^ {(1, 0)} +\zeta ^ {- 2}A ^ {(0, 1)} -A) (\varphi _0+\varphi _1\zeta ^ {- 1} +\cdots))\\
& =- \zeta \delta^ {- 1} ( A ^ {(1, 0)}\varphi _ 1) +\mbox {
lower order terms}
\end {align*}
so using Lemma~\ref{lemma:Psi}, Lemma~\ref{Lemma:determinant} and \eqref {eq:eigenvector} we have $ A^{(1,0)}\varphi_1 \neq 0 $. Since $\im A^{(1,0)}=E$  we conclude that $\lim_{x\to P_\infty} \widehat{\mathcal E }_x = \delta^{-1}(E)$. Furthermore, the limit of $\hat f^x = \hat{\mathcal
  E}_x \H$ as $x$ tends to $P_0$ or $P_{\infty}$ is $\delta^{-1}(E\oplus Ej)=L$.
\end {proof}

Historically, points on the eigenline curve could be interpreted
differential geometrically only for unitary $\mu$, where they correspond to the associated
family of constant mean curvature surfaces. From our results we obtain
an interpretation of all points on the spectral curve of a constant
mean curvature torus as its $\mu$--Darboux transforms. We combine
those observations in the following

\begin{theorem} Let $f: T^2\to\R^3$ be a constant mean curvature
  immersion and $\pi\colon \C\P^3\to\H\P^1$ the twistor projection which sends a  complex line in $\C^4$ to the corresponding quaternionic line in $\H^2$.  The maps
\[
\xymatrix {&\CP ^ 3\ar [d] ^\pi\\
T ^ 2\times\Sigma _e\ar [r]_{\hat{\mathcal E}\H} \ar [ru]^{\widehat {\mathcal E}}
&\HP ^ 1}
\]
defined by the prolongation of the eigenline bundles together with the
twistor projection satisfy:
\begin {enumerate}
\item For each $ x\in\Sigma ^\circ _ e $, $ \hat f^x =
  \pi\widehat{ \mathcal E }(\cdot, x)$ is a $\mu $-Darboux transform
  of the original constant mean curvature immersion $f$. In fact,
  $\hat f^x$ is also a constant mean curvature torus in Euclidean
  3--space (Theorem~\ref{thm:DT is CMC}).
\item The original constant mean curvature torus $f=\pi\widehat
  {\mathcal E} (\cdot, P _\infty)=\pi\widehat {\mathcal E} (\cdot, P
  _0)$ is the limit of $\mu$--Darboux transforms for $\mu\to 0,
  \infty$.
\item The Gauss map of $ f $ is given by $\mathcal E (\cdot ,P _\infty
  )=E$ as the $+i$ eigenspace of $J$.
\item For $p\in T^2$ the eigenline curve is algebraically mapped
  into $\CP^3$ by $\hat{\mathcal E}(p, \cdot)$. Thus we obtain a
  smooth $T^2$--family of algebraic curves $ \Sigma_e\to \CP^3$.
\end {enumerate}

\end {theorem}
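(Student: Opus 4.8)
The plan is to obtain the four assertions by assembling results already proven, the only genuinely new ingredient being the algebraicity in (iv). First I would recall the construction: over $x\in\Sigma_e^\circ$ the fibre $\mathcal E_x=\varphi^x\C$ is spanned by a $\nabla^{\mu(x)}$--parallel section $\varphi^x$, and $\widehat{\mathcal E}_x$ is the complex line spanned by its prolongation $\widehat\varphi^x$, where we regard the trivial $\H^2$--bundle $V$ as a complex $\C^4$--bundle via the constant complex structure $I$. The twistor projection $\pi$ sends the complex line $\widehat{\mathcal E}_x(p)$ to the quaternionic line $\widehat{\mathcal E}_x(p)\H=\widehat f^x(p)$, so the diagram commutes by construction.

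Assertion (i) is then immediate from the definitions together with Theorem~\ref{thm:DT is CMC}: since $\varphi^x$ is $\nabla^{\mu(x)}$--parallel, $\widehat f^x=\widehat\varphi^x\H$ is a $\mu$--Darboux transform, and Theorem~\ref{thm:DT is CMC} shows that $\Im(\widehat f^x)$ is a constant mean curvature surface in $\R^3$. Assertion (iii) follows directly from Theorem~\ref{thm:limiting}(i), which gives $\mathcal E(\cdot,P_\infty)=E$ as the $+i$--eigenspace of $J$; combined with the relation $Je=eN$ from \eqref{eq:J}, this identifies $E$ with the Gauss map $N$ of $f$. Assertion (ii) is exactly the content of Theorem~\ref{theorem:extends to f}: although $\widehat{\mathcal E}$ is a priori defined only on $\Sigma_e^\circ$ because $\nabla^{\mu(x)}$ has simple poles at $P_0$ and $P_\infty$, it extends holomorphically across both points with $\lim_{x\to P_0}\widehat{\mathcal E}_x=\delta^{-1}(Ej)$ and $\lim_{x\to P_\infty}\widehat{\mathcal E}_x=\delta^{-1}(E)$; applying $\pi$, both limits yield $\delta^{-1}(E\oplus Ej)=L$, so $\widehat f^x\to f$.

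The real work lies in (iv). Here I would first invoke Theorem~\ref{theorem:extends to f} to conclude that, for fixed $p$, the line bundle $\widehat{\mathcal E}(p,\cdot)$ is a holomorphic line subbundle of the trivial $\C^4$--bundle over \emph{all} of $\Sigma_e$, the points $P_0,P_\infty$ included; equivalently $\widehat{\mathcal E}(p,\cdot)\colon\Sigma_e\to\CP^3$ is a holomorphic map from a compact Riemann surface. Since $\Sigma_e$ has finite genus (Corollary~\ref{cor:genus}) it is, after passing to its normalisation where necessary, a smooth projective algebraic curve; by Chow's theorem the graph of a holomorphic map into $\CP^3$ is then a closed analytic and hence algebraic subvariety of $\Sigma_e\times\CP^3$, so the map is a morphism of algebraic varieties. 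Finally the eigenline bundle $\mathcal E(p)$, its parallel transport under $\nabla^{\mu(x)}$, and the prolongation all depend smoothly on $p\in T^2$, so $\{\widehat{\mathcal E}(p,\cdot)\}_{p\in T^2}$ is a smooth $T^2$--family of algebraic curves in $\CP^3$.

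I expect the only delicate point to be the holomorphic extension across $P_0$ and $P_\infty$, where $\nabla^{\mu(x)}$ degenerates and a chosen parallel section $\widehat\varphi^x$ blows up (like $\zeta$ near $P_\infty$); what rescues the argument is that only the line $\widehat\varphi^x\C$ matters projectively, and Theorem~\ref{theorem:extends to f} shows this line has a well-defined holomorphic limit. Once holomorphicity on the compact curve is secured, algebraicity is a formal consequence of compactness via GAGA, so no further estimates are needed.
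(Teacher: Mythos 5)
Your proposal is correct and follows essentially the same route as the paper, which offers no separate proof but states this theorem explicitly as a combination of the earlier results: (i) from the eigenline parametrisation of $\mu$--Darboux transforms (so that $\varphi^x$ has multiplier and $\hat f^x$ closes on $T^2$) together with Theorem~\ref{thm:DT is CMC}, (ii) from Theorem~\ref{theorem:extends to f}, and (iii) from Theorem~\ref{thm:limiting} and \eqref{eq:J}. The only point the paper leaves implicit is the algebraicity in (iv), and your argument --- holomorphic extension of $\widehat{\mathcal E}(p,\cdot)$ across $P_0,P_\infty$ via Theorem~\ref{theorem:extends to f}, then Chow/GAGA on the compact curve $\Sigma_e$, with the sensible caveat of passing to the normalisation at the singular points of $\Sigma_e^\circ$ --- is the standard and correct way to fill that in.
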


By pulling back the eigenline bundles and their prolongations, the analogous result holds on the normalisation $\widetilde\Sigma_e\cong \widetilde\Sigma_m $. This normalised version
holds more generally for conformally
immersed tori into $S^4$ of finite spectral genus
\cite{conformal_tori}, for which there is a multiplier  curve but no eigenline curve. The proof of this result is much more involved
and requires asymptotic analysis of Dirac--type operators \cite{ana}.
For constant mean curvature tori our proof can be seen as a geometric interpretation of the eigenline spectral curve
\cite{Hitchin:90}. In fact, the harmonic Gauss map of the constant
mean curvature torus is described by the $T^2$--family of algebraic
functions $ \mathcal E\colon T^2 \times\Sigma_e \to \CP^1$
which, interpreted as a flow of line bundles, is linear in
the Jacobian of ${\Sigma}_e$.  From this one could show that
the prolongation bundle $\hat{\mathcal E}$ also gives a linear
$T^2$--flow in the Jacobian. In the generic case when $\Sigma_e $ is smooth, such flows can be parametrised by theta
functions of ${\Sigma_e}$, providing explicit conformal
parametrisations of the constant mean curvature torus and all its spectral
$\mu $--Darboux transforms \cite{Bobenko:91}.

\begin{appendix}
\section{Darboux Transforms of   the Standard Cylinder}

\newcommand{\eiy}{e^{iy}}  

We now illustrate some of our results by explicitly computing $\mu$--Darboux
transforms of the standard cylinder
\begin{equation}
\label{eq:cylinder}
f(x,y) = \frac{1}{2}(-ix  + j e^{iy})
\end{equation}
in $\R^3$. The derivative of $f$ is  
\[
df =  \frac{1}{2}(-i dx + j i e^{iy}dy)
\]
and the  Gauss map of $f$ is given by
\[
N(x,y) = - j \eiy\,.
\]
To compute $\mu$-Darboux transforms of $f$, we need to find parallel sections $\varphi\in\Gamma(\widetilde{V/L})$ of the
flat connection $\nabla^\mu = \nabla+ *A(J(a-1) + b)$ on $V/L$, where $a$ and $b$ are defined in terms of $\mu$ (Remark~\ref{rem:lambda}). As in section~\ref{sec:cmc}
we identify $V/L$ with $\Li$ via the splitting $V= L\oplus \Li$ where
$e =\begin{pmatrix} 1\\0
\end{pmatrix}
$. Thus, writing $\varphi =e\alpha$ we seek $\alpha: \tilde M =
\R^2\to\H$ with \eqref{eq:A=delta} 
\[
d\alpha = -\frac 12df(N\alpha(a-1) + \alpha b)\,.
\]
Using the decomposition $\alpha=\alpha_0+j\alpha_1$ where $\alpha_0,\alpha_1: \R^2
\to\C=\Span\{1,i\}$, we rewrite the previous equation as
\begin{eqnarray*}
\begin{pmatrix} \alpha_0\\ \alpha_1
\end{pmatrix}_x &=& 
\frac{i}{4}\begin{pmatrix} b &   e^{-iy} (a-1)\\
\eiy (a-1) &-b
\end{pmatrix}
\begin{pmatrix}\alpha_0\\ \alpha_1
\end{pmatrix}
\\[.4cm]
\begin{pmatrix} \alpha_0\\ \alpha_1
\end{pmatrix}_y &=& \frac{i}{4}
\begin{pmatrix} a-1 &  -e^{-iy} b\\
 -\eiy b&\displaystyle 1-a
\end{pmatrix}
\begin{pmatrix}\alpha_0\\ \alpha_1
\end{pmatrix}\,.
\end{eqnarray*}
The first differential equation has constant coefficients in $x$
and therefore we can solve this system explicitly. Choosing a square
root $c\in\C$ of $a-1$, the general
solution of this system on $\R^2$ is given by
\newcommand{\epivh}{e^{\frac{iy}{2}}}
\newcommand{\epzh}{e^{w}}
\newcommand{\emivh}{e^{\frac{-iy}{2}}}
\newcommand{\emzh}{e^{-w}}
\[
\alpha = \begin{pmatrix} \emivh\\
p_+ \epivh
\end{pmatrix}m_+ \epzh + \begin{pmatrix} \emivh\\
p_-\epivh
\end{pmatrix} m_-\emzh
\]
where $m_\pm\in\C$ and
\begin{equation}
\label{eq:wp}
w = \frac{\sqrt{2}}{4c}((a-1)x-by), \quad
p_\pm = - \frac{b\pm \sqrt{2}ic}{a-1}\,.
\end{equation}
Substituting the above formulae for $\alpha$
into  \eqref{eq:Tinvers} yields all $\mu$--Darboux transforms $\hat f = f+ T$ of
the standard cylinder. In general, these $\hat{f}$ do not satisfy any periodicity conditions despite the fact that the above differential equations have periodic coefficients.

\begin{figure}[h]
\begin{center}
\includegraphics[width=0.45\linewidth]{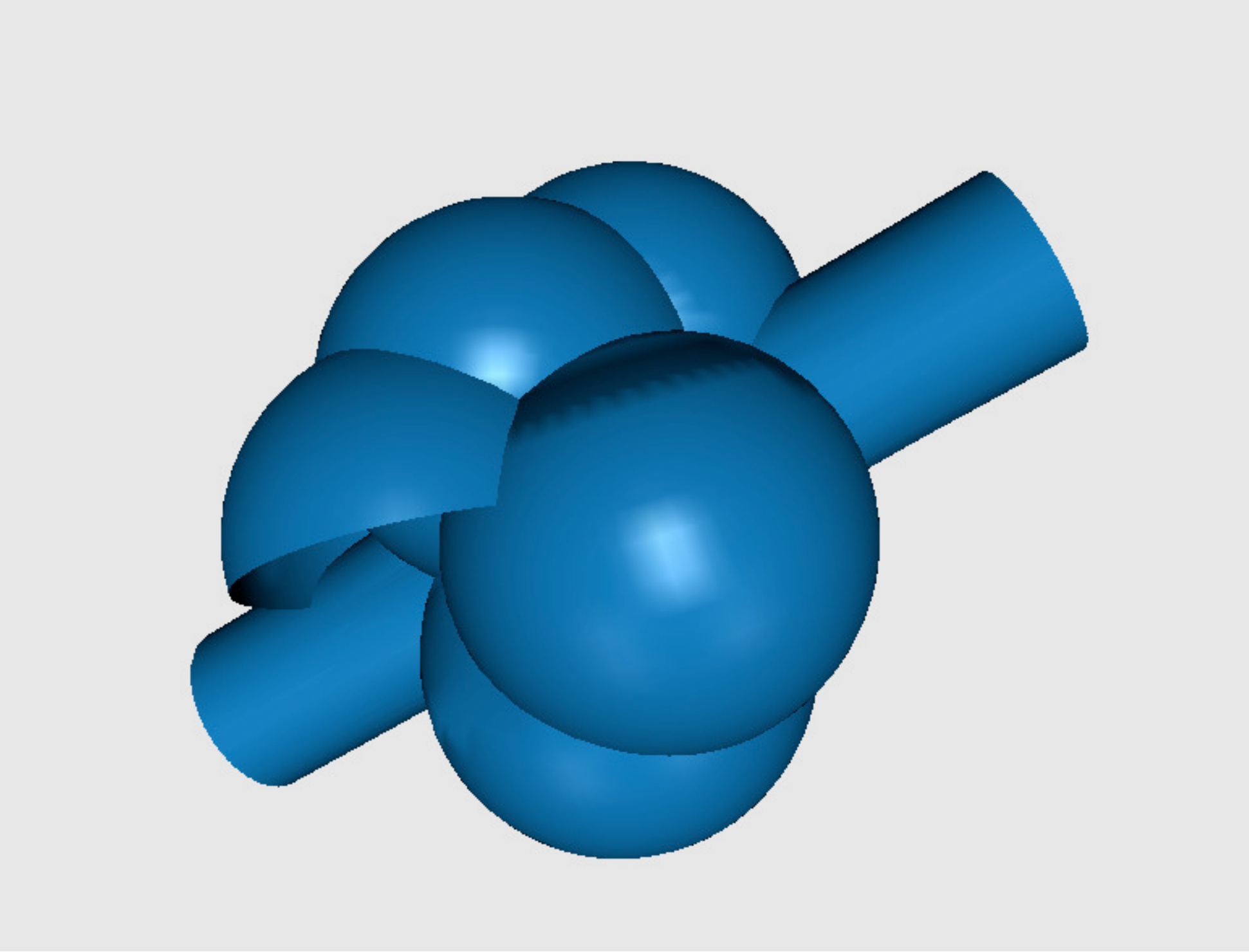}
\includegraphics[width=0.45\linewidth]{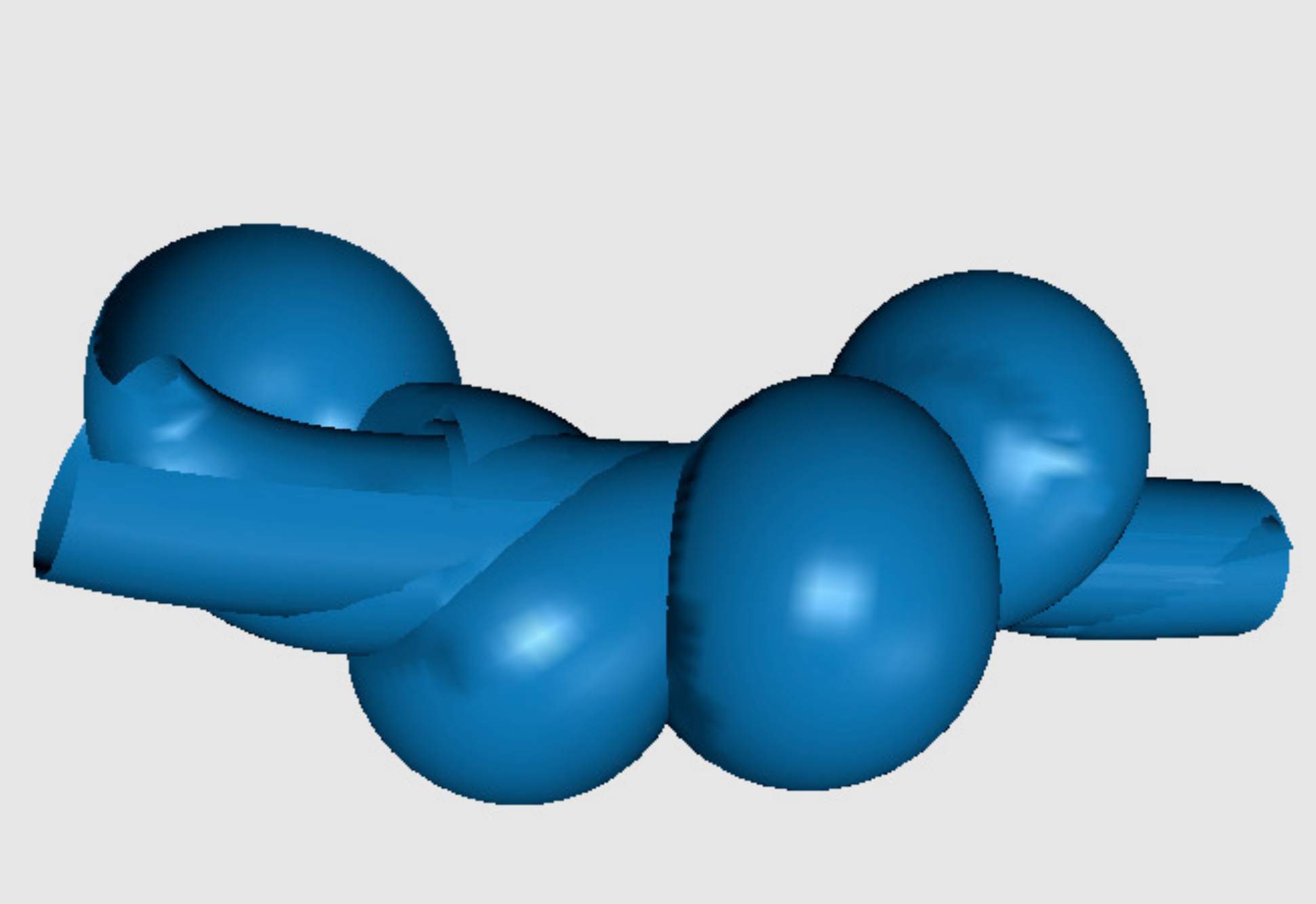}
\caption{$\mu$--Darboux transforms which are not  classical Darboux transforms.}
\end{center}
\end{figure}

We now compute the closed $\mu$--Darboux transforms of the standard cylinder $f$ which are those $\hat{f}$ satisfying
\[
\hat{f}(x,y)=\hat{f}(x,y+2\pi)\,.
\]
These are
given by parallel sections
$\varphi\in\Gamma(\widetilde{V/L})$ of $\nabla^\mu$ such that 
\begin{equation}
\label{eq:monodromy}
 \varphi(x,
y+2\pi) = \varphi(x,y) h\quad\text{with}\quad h\in\H_*\,.
\end{equation}
For $\mu$ on the unit circle $\nabla^\mu$ is a
quaternionic connection on a rank one bundle. Then all parallel
sections satisfy \eqref{eq:monodromy} and hence give rise to closed $\mu$-Darboux transforms. 
In particular, as in Theorem~\ref{thm:DT is CMC}, when $\mu\in S^1\setminus\{1\}$ we
obtain a translated cylinder $g = f+ N + \frac{b}{1-a}$. 

For $\mu$ not on the unit circle $\nabla^\mu$ is not 
quaternionic  but rather an ${\bf SL}(2,\C)$-connection on the complex rank two bundle $\widetilde{V/L}$.
Therefore any parallel section satisfying \eqref{eq:monodromy} must in fact have monodromy $h\in\C_*$ and is thus given by the eigenvectors of the
monodromy representation of 
$\nabla^\mu$. For $\mu\in \C_*$ these  parallel sections are
\[
\alpha^\pm = \begin{pmatrix} \emivh \\
p_\pm e^{\frac {iy}2}
\end{pmatrix}
e^{\pm w}
\]
and have monodromy
\begin{equation}
\label{eq:mono_cyl}
h_\pm^\mu = -e^{\mp\frac{\sqrt{2}b\pi}{2c}}\in\C_*\,,
\end{equation}
where we recall $p_{\pm}$ and $w$ from \eqref{eq:wp}.
Therefore, the prolongations of the sections $\varphi^\pm=e\alpha^\pm$
give closed $\mu$--Darboux transforms $\hat f^\pm$. Using
\eqref{eq:Tinvers} the $\mu$--Darboux transforms $\hat f^\pm$ are
rotations and translations of the original cylinder. Explicitly, 
\[
\hat f^\pm = f + T^\pm = f + T_0^\pm + j e^{iy} T_1^\pm
\]
 where
\[
T_0^\pm = \frac{2}{r^\pm}
\left(
\Re b - 2\frac{\bar p_\pm^2 i}{1+|p_\pm|^2}\Im a - \frac{1-|p_\pm|^2}{1+|p_\pm|^2}
i\Im b \right)\in\C\]
and
\[
T_1^\pm = \frac{2}{r^\pm}\left(\Re a -1 + \frac{1-|p_\pm|^2}{1+|p_\pm|^2}i\Im a - \frac{2p_\pm}{1+|p_\pm|^2} i \Im b)\right)\in\C
\]
with
\[
r^\pm = |a-1|^2+|b|^2 - 4 \,\Im((a-1)\bar b) \frac{\Im
  p_\pm}{1+|p_\pm|^2}\in\R\,.
\]
We call $\mu\in\C_*$ a  \emph{resonance point} of $f$ if the monodromies $h_\pm^\mu$ coincide. These are the points
\[
\mu_k=2k^2-1 -2k\sqrt{k^2-1}\,, \qquad k\in\Z\setminus\{0\}\,.
\]
Away from resonance points the monodromy of $\nabla^\mu$ is diagonalisable with two distinct
eigenvalues $h^\mu_+$ and $h^\mu_-$. Hence we obtain exactly two
$\mu$--Darboux transforms given by parallel sections of $\nabla^\mu$.

At the resonance points $\mu=\mu_k$ with $|k|>1$, by \eqref{eq:mono_cyl}, the monodromy
is 
\[
h^\mu:= h_+^\mu = h_-^\mu =-1\,.
\]
The space of parallel sections  
of $\nabla^\mu$ with monodromy $h=-1$
has dimension two and gives rise to a  $\CP^1$-- family of closed $\mu$--Darboux transforms.
A parallel section $\alpha = \alpha_{+}m_{+} + \alpha_{-} m_-$ for
$m_+=0$
or $m_-=0$ gives a rigid motion of the standard cylinder.
However for $m_+m_-\not=0$ the parallel section $\alpha $ yields a bubbleton
with $|k|$ lobes where the parameter  $\tfrac{m_+}{m_-}\in \CP^1$ rotates and slides the bubble
\cite{wente_sterling, darboux_isothermic}.
\begin{theorem} Every non--constant closed $\mu$--Darboux transform $\hat f:
  M\to\R^4$ of the standard cylinder $f$  is a rigid motion of $f$ provided
  $ \mu$ is not a resonance point $\mu_k$. At a resonance point $\mu_k$ we additionally obtain 
  a $\CP^1$--family of bubbletons with $|k|$ lobes. 
\end{theorem}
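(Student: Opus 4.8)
The plan is to classify the closed $\mu$--Darboux transforms according to the monodromy \eqref{eq:monodromy} of the underlying parallel sections, splitting into the unitary case, the generic non--unitary case, and the resonance points. Since $\mu=1$ produces the constant transform $\hat f=\infty$, I restrict to $\mu\neq 1$, so that ``non--constant'' is automatic. The explicit parallel sections $\alpha^\pm$ and their monodromies $h^\mu_\pm$ in \eqref{eq:mono_cyl} are already in hand, so the real task is to organise these data and read off the geometry.

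First I would dispose of $\mu\in S^1\setminus\{1\}$. Here $\nabla^\mu$ is quaternionic of rank one, so every parallel section has monodromy in $\H_*$ and hence satisfies \eqref{eq:monodromy}; by Theorem~\ref{thm:DT is CMC} all of these yield the single translated cylinder $g=f+N+\tfrac{b}{1-a}$, which, using $N=-je^{iy}$, is $f$ rotated by $\pi$ about its axis and translated in $\R^4$, hence a rigid motion. As the resonance points $\mu_k$ are real and different from $\pm1$, no unitary $\mu$ is a resonance point, so this case is covered by the first assertion.

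For $\mu\notin S^1$ the connection $\nabla^\mu$ is a genuine ${\bf SL}(2,\C)$--connection, and I would first show that closedness forces complex monodromy. By the reality relation \eqref{eq:reality}, if $\varphi$ is $\nabla^\mu$--parallel then $\varphi j$ is $\nabla^{\bar\mu^{-1}}$--parallel; writing a putative quaternionic multiplier as $h=h_0+jh_1$ with $h_0,h_1\in\C$ and using that $\gamma^*\varphi=\varphi h$ must again be $\nabla^\mu$--parallel forces $h_1=0$, since a nonzero section cannot be parallel for the two distinct connections $\nabla^\mu$ and $\nabla^{\bar\mu^{-1}}$. Thus the closed transforms are precisely the prolongations of the monodromy eigensections $\alpha^\pm$. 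When $\mu$ is not a resonance point the eigenvalues $h^\mu_+\neq h^\mu_-$ are distinct, so $\alpha^+,\alpha^-$ span the only two eigenlines and produce exactly the transforms $\hat f^\pm$; the explicit expressions for $T_0^\pm,T_1^\pm$ exhibit each $\hat f^\pm$ as a rotation and translation of $f$, completing the non--resonant part.

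Finally, at a resonance point $\mu=\mu_k$ with $|k|>1$ the two monodromies coincide at $-1$, while $p_+\neq p_-$ for $\mu\neq1$ keeps $\alpha^+,\alpha^-$ linearly independent; hence the whole two--dimensional space of parallel sections carries monodromy $-\mathrm{Id}$, every such section is closed, and projectivising gives the $\CP^1$--family $\alpha=\alpha^+m_++\alpha^-m_-$. The boundary members $m_+m_-=0$ reproduce the rigid motions $\hat f^\pm$, whereas for $m_+m_-\neq0$ the prolongation is a bubbleton, with $[m_+:m_-]$ governing its position. The one point that does not reduce to the preceding computations, and so the main obstacle, is the geometric identification of these surfaces: producing the $\CP^1$--family is pure linear algebra, but recognising the resulting surfaces as bubbletons with exactly $|k|$ lobes, with $m_+/m_-$ rotating and sliding the bubble, rests on the bubbleton constructions of \cite{wente_sterling, darboux_isothermic} rather than on a direct calculation.
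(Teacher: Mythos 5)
Your proposal is correct and takes essentially the same route as the paper, whose ``proof'' of this theorem is exactly the preceding appendix computation: unitary $\mu$ yields the translated parallel cylinder, non-unitary closed transforms are the prolongations of the monodromy eigensections $\alpha^\pm$ whose explicit $T_0^\pm$, $T_1^\pm$ exhibit rigid motions, and at a resonance point the two-dimensional space of parallel sections with monodromy $-1$ gives the $\CP^1$-family, identified as bubbletons with $|k|$ lobes by appeal to \cite{wente_sterling, darboux_isothermic}, just as you do. Your one elaboration---ruling out genuinely quaternionic monodromy for $\mu\notin S^1$ via \eqref{eq:reality} and the fact that no nonzero section is parallel for both $\nabla^\mu$ and $\nabla^{\bar\mu^{-1}}$ (i.e.\ $\ker A^{(1,0)}\cap\ker A^{(0,1)}=\{0\}$)---is the same argument the paper itself deploys in the proofs of Theorems~\ref{thm:parametrized mu Darboux} and~\ref{thm:isomorphism}, so it fills in a step the appendix leaves implicit rather than departing from the paper's method.
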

\begin{figure}[h]
\begin{center}
\includegraphics[width=0.45\linewidth]{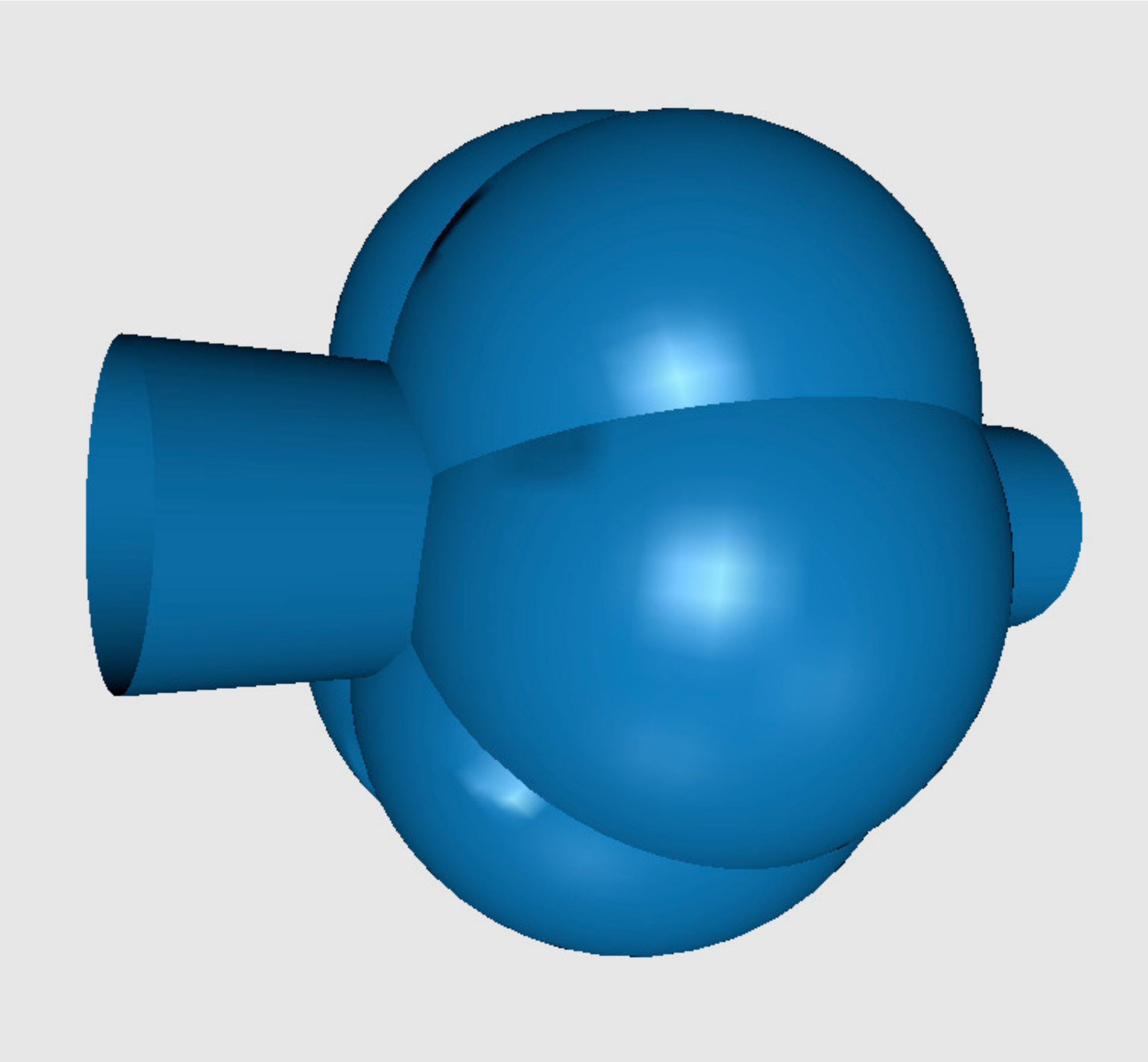}
\caption{Darboux transform at the resonance point $\mu_6$ with 6 lobes.}
\end{center}
\end{figure}
We now restrict our attention to those closed $\mu$--Darboux transforms of the standard cylinder which are also  classical Darboux transforms. By Theorem~\ref{thm:DT is not CDT} these are precisely the $\mu$--Darboux transforms for which $\mu\in\R_*\cup S^1$.
\begin{cor}
 The closed $\mu$--Darboux transforms of the standard cylinder $f:
  M\to\R^3$ which are also classical are:
\begin{enumerate}
\item  the dual surface $g= f+N$ of $f$ for $\mu\in S^1\setminus\{1\}$, up to a translation of $\R^3$ in the ambient $\R^4$;
\item  a translation of $f$ in $\R^3$ along the $i$--axis by $\pm\frac{\sqrt{2}i}{\sqrt{a-1}}$ for $\mu\in (0,\infty)$ not a resonance point;
\item  a rotation of $f$ in $\R^3$ along the $i$-axis  for $\mu\in (-\infty,0)$ not a resonance point;
\item a bubbleton with $|k|$ lobes for a resonance point $\mu=\mu_k$  with $|k|>1$;
\item  a constant point for $\mu=1$.
\end{enumerate}
\end{cor}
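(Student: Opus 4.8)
The plan is to organise everything around Theorem~\ref{thm:DT is not CDT}, which tells us that a closed $\mu$--Darboux transform of $f$ is classical precisely when $\mu\in\R_*\cup S^1\setminus\{1\}$. Together with the fact, established earlier, that $\mu=1$ produces the constant point $\infty$, this immediately accounts for case~(v) and reduces the remaining work to the two ranges $\mu\in S^1\setminus\{1\}$ and $\mu\in\R_*$. The unit circle range is handled directly by Theorem~\ref{thm:DT is CMC}: for $\mu=e^{i\theta}\in S^1\setminus\{1\}$ the transform equals $\hat f=g+\cot\tfrac\theta2$, the dual surface $g=f+N$ up to a real translation in the ambient $\R^4$, which is exactly case~(i).

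For the real range I would work with the explicit prolongations computed in the appendix, writing $\hat f^\pm=f+T_0^\pm+je^{iy}T_1^\pm$ with $T_0^\pm,T_1^\pm,p_\pm,r^\pm$ as given there, and invoke the preceding theorem, which already guarantees that a non--resonance closed transform is a rigid motion of $f$. The only remaining task is to identify \emph{which} rigid motion. For real $\mu$ the constants simplify, since $a=\tfrac{\mu+\mu\invers}{2}$ is real and $b=\tfrac{\mu-\mu\invers}{2i}$ is purely imaginary, so $\Im a=0$ and $\Re b=0$. The decisive dichotomy is the sign of $a-1$: for $\mu\in(0,\infty)$ the bound $a\ge 1$ makes $c=\sqrt{a-1}$ real and forces $p_\pm$ to be purely imaginary, whereas for $\mu\in(-\infty,0)$ one has $a\le -1$, so $c$ is purely imaginary and a short computation yields $|p_\pm|=1$.

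In the positive case I would substitute $\Im a=0$, $\Re b=0$ and $p_\pm$ imaginary into the formulas for $T_0^\pm$ and $T_1^\pm$, simplifying with $a^2+b^2=1$ and $c^2=a-1$, to obtain $T_1^\pm=0$ and $T_0^\pm=\pm\frac{\sqrt2\,i}{\sqrt{a-1}}$; since $T_0^\pm$ is then a pure imaginary constant and the $je^{iy}$--term is killed, $\hat f^\pm$ is $f$ translated along the $i$--axis, giving case~(ii). In the negative case $|p_\pm|=1$ makes $1-|p_\pm|^2=0$ and hence $T_0^\pm=0$, so only the $je^{iy}$--term survives; using $a^2+b^2=1$ once more one checks $|1+2T_1^\pm|=1$, so the cross--sectional circle is preserved and $\hat f^\pm$ is a rotation of $f$ about the $i$--axis, which is case~(iii). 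Finally, every resonance point $\mu_k$ with $|k|>1$ satisfies $\mu_k>0$ and $\mu_k\ne 1$, hence lies in $\R_*$ and is classical by Theorem~\ref{thm:DT is not CDT}; the $\CP^1$--family of bubbletons with $|k|$ lobes produced there by the preceding theorem therefore consists of classical Darboux transforms, which is case~(iv).

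The main obstacle is purely computational: carrying out the algebraic simplifications of the somewhat unwieldy expressions for $T_0^\pm$, $T_1^\pm$ and $r^\pm$ and confirming the two clean outcomes, namely $T_1^\pm=0$ with $T_0^\pm=\pm\sqrt2\,i/\sqrt{a-1}$ when $\mu>0$, and $T_0^\pm=0$ with $|1+2T_1^\pm|=1$ when $\mu<0$. The conceptual content is the dichotomy $a-1>0$ versus $a-1<0$, equivalently $c=\sqrt{a-1}$ real versus imaginary, which is precisely what separates the translations from the rotations; everything else is bookkeeping organised around the cases of Theorem~\ref{thm:DT is not CDT}.
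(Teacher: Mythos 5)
Your proposal is correct and follows essentially the same route as the paper: reduce to $\mu\in\R_*\cup S^1$ via Theorem~\ref{thm:DT is not CDT}, dispatch $\mu\in S^1\setminus\{1\}$ and $\mu=1$ by Theorem~\ref{thm:DT is CMC} and the earlier corollary, and for real $\mu$ carry out exactly the paper's computation of $T_0^\pm$ and $T_1^\pm$ from the appendix formulas with $a^2+b^2=1$, the case split $a>1$ versus $a<1$ (i.e.\ $c=\sqrt{a-1}$ real versus imaginary) being the paper's own dichotomy. Your two small additions --- explicitly checking $|1+2T_1^\pm|=1$ to confirm the rotation, and noting $\mu_k\in\R_{>0}\setminus\{1\}$ for the resonance case --- are correct and merely make explicit what the paper leaves implicit.
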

\begin{proof}
A straightforward computation, using $a^2 + b^2=1$, shows that 
\[
1+|p_\pm|^2 = \begin{cases} \frac{2b}{a-1}p_\pm & a >1\\
                            2 & a<1
                          \end{cases}\,,\quad
1-|p_\pm|^2 = \begin{cases} \pm \frac{2\sqrt{2}i}{c}p_\pm & a>1\\
         0 &a<1\,.
       \end{cases}
\]
     Thus the translational part of the Darboux transform
     $\hat f = f + T_0^\pm + je^{iy}T_1^\pm$, away from the resonance
     points, is given by
\[
T_0^\pm = \begin{cases} \mp\frac{\sqrt{2}i}{c} & a >1\\
                        0 & a<1
                      \end{cases}
\]                      
and the rotational part by
\[
T_1^\pm = \begin{cases}   0 & a >1\\
\frac{2}{a-1} \mp \frac{\sqrt{2}cib}{(a-1)^2} & a<1\,.
\end{cases}
\]

\end{proof}

Since for each $k\in\Z$ a parallel section of $\nabla^{\mu_k}$ gives a
holomorphic section with monodromy $h=-1$, we can also take linear
combinations of parallel sections $\alpha^\pm_{\mu_k}$ and $
\alpha^\pm_{\mu_l}$ of $\nabla^{\mu_k}$ and $\nabla^{\mu_l}$
respectively with $k\not=l$. This gives  a holomorphic section
\[
\alpha = m_+\alpha^+_{\mu_k} + m_-\alpha^-_{\mu_k} +
n_+\alpha^+_{\mu_l} + n_-\alpha^-_{\mu_l}\in
H^0(\widetilde{V/L})
\]
with monodromy $h=-1$ for each choice of $m_\pm, n_\pm\in\C$. The
prolongation of $\alpha$ gives a closed Darboux transform of $f$,
however $\alpha$ is in general not a parallel section for any
$\mu\in\C_*$. In particular
\[
\{\text{closed $\mu$--Darboux transforms}\}\subsetneq \{\text{closed Darboux transforms}\}\,.
\]
For a further discussion of the geometry of these examples see
\cite{rectangular}.
\begin{rem}
  The Darboux transformation satisfies Bianchi permutability
  \cite{conformal_tori}. In particular, a $\mu$--Darboux transform of
  a bubbleton is given algebraically \cite{habil} and we obtain
  multibubbletons at the resonance points. Away from the resonance
  points the $\mu$--Darboux transformation is again given by a rigid motion. 
\end{rem}
\begin{figure}[h]
\begin{center}
\includegraphics[width=0.45\linewidth]{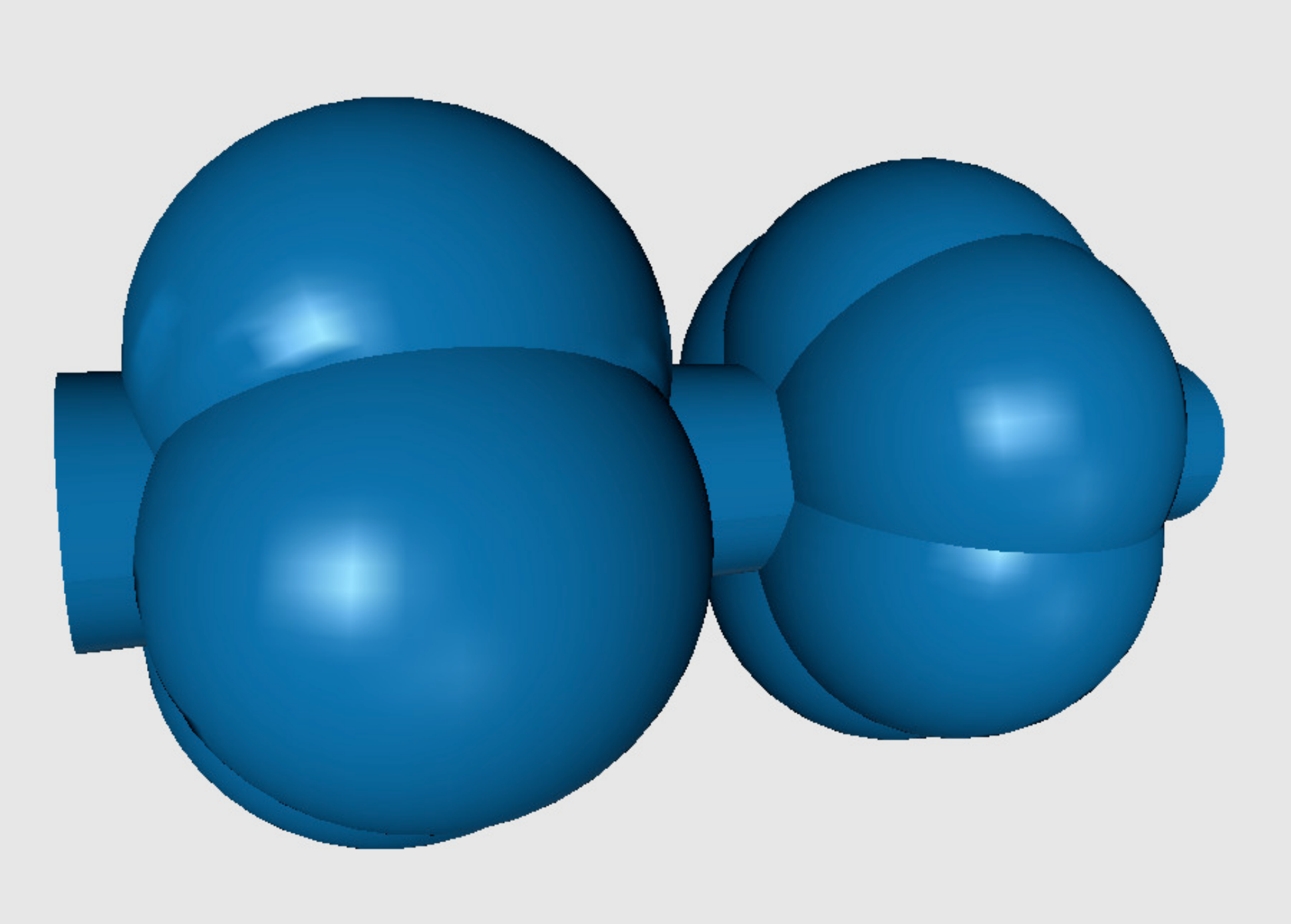}
\caption{Darboux transform of a bubbleton ($\mu=\mu_3$) at the resonance point $\mu_6$ ``adding'' 6 lobes.
}
\end{center}
\end{figure}

\end{appendix}

\bibliographystyle{amsplain}
\providecommand{\bysame}{\leavevmode\hbox to3em{\hrulefill}\thinspace}
\providecommand{\MR}{\relax\ifhmode\unskip\space\fi MR }
\providecommand{\MRhref}[2]{%
  \href{http://www.ams.org/mathscinet-getitem?mr=#1}{#2}
}
\providecommand{\href}[2]{#2}

\end {document}